\newtheorem{theorem}{Theorem}[section]
\newtheorem{corollary}[theorem]{Corollary}
\newtheorem{lemma}[theorem]{Lemma}
\newtheorem{proposition}[theorem]{Proposition}
\newtheorem{definition}[theorem]{Definition}
\newtheorem{example}[theorem]{Example}
\newtheorem{remark}[theorem]{Remark}
\begin{document}
	\title[Cohomological deformations]{Deformations of Yang-Baxter operators via $n$-Lie algebra cohomology}
	
	\author[Elhamdadi]{Mohamed Elhamdadi} 
	\address{University of South Florida, Tampa, FL, USA}
	\email{emohamed@usf.edu}
	
	\author[Zappala]{Emanuele Zappala} 
	\address{Yale University, New Haven, CT, USA} 
	\email{emanuele.zappala@yale.edu \\ zae@usf.edu}

	\maketitle
	
   \begin{abstract}
   	We introduce a cohomology theory of $n$-ary self-distributive objects in the tensor category of vector spaces that classifies their infinitesimal deformations. For $n$-ary self-distributive objects obtained from $n$-Lie algebras we show that ($n$-ary) Lie cohomology naturally injects in the self-distributive cohomology and we prove, under mild additional assumptions, that the map is an isomorphism of second cohomology groups. This shows that the self-distributive deformations are completely classified by the deformations of the Lie bracket. This theory has important applications in the study of Yang-Baxter operators as the self-distributive deformations determine nontrivial deformations of the Yang-Baxter operators derived from $n$-ary self-distributive structures. In particular, we show that there is a homomorphism from the second self-distributive cohomology to the second cohomology of the associated Yang-Baxter operator. Moreover, we prove that when the self-distributive structure is induced by a Lie algebra with trivial center, we get a monomorphism. We construct a deformation theory based on simultaneous deformations, where both the coalgebra and self-distributive structures are deformed simultaneously. We show that when the Lie algebra has trivial cohomology (e.g. for semi-simple Lie algebras) the simultaneous deformations might still be nontrivial, producing corresponding Yang-Baxter operator deformations.
   	We provide examples and computations in low dimensions, and we completely characterize $2$-cocycles for the self-distributive objects obtained from all the nontrivial real Lie algebras of dimension $3$, i.e. the Bianchi I-IX, and all the nontrivial complex Lie algebras of dimension $3$. 
   \end{abstract}
	
	\section{Introduction}
	
	The Yang-Baxter Equation (YBE) was first introduced in the 1970's in statistical mechanics to impose the conservation of momenta in scattering processes \cite{Jim}. It has ever since outgrown its original field of applications by becoming a central player in geometric topology, more specifically in knot theory. In fact, YBE is an algebraic version of Reidemeister move III, which is one of the necessary and sufficient conditions for two diagrams to represent isotopic knots/links \cite{Oht,Tur}. The $n$-simplex equation, originally introduced by Zamolodchikov, is a generalization of the YBE that has been applied (for $n=3$) to the theory of knotted surfaces embedded in $4$-space \cite{nsimplex,KTS}.
	
	A well known class of algebraic structures that produce solutions to the YBE are quandles. These are sets endowed with an operation that corresponds to Reidemeister moves. In particular, they satisfy the self-distributive (SD) condition, which is a set-theoretic version of Reidemeister move III. Linearization then gives YB operators. The cohomology of quandles was introduced in 
	 \cite{CJKLS} to construct certain partition functions that are invariants of links in $3$-space and knotted surfaces in $4$-space.
	
	More recently, the study of homology theories of Yang-Baxter (YB) operators has led to new knot invariants (\cite{CES}) that generalize the {\it quandle cocycle} invariants  given in \cite{CJKLS}. Related homology theories for YB operators have appeared in \cite{PW,PW_Hom,LV}, and a skein-theoretic approach to both homology and cohomology of YB operators was given in \cite{ESZ_YB}. The systematic study of solutions of the YBE and the (co)homology theories of YB operators is still a highly nontrivial problem with applications that span from knot theory to quantum gravity. 
	
	$n$-Lie algebras are $n$-ary counterparts to Lie algebras, where the notion of Lie bracket and Jacobi identity are replaced by $n$-ary brackets and the Filippov identity. Interestingly, $n$-Lie algebras were introduced in theoretical physics \cite{Nambu} by Nambu, with the purpose of generalizing Hamiltonian dynamics. They were then rediscovered by Filippov in the 1980's \cite{Fil}, who proved that the structure 
	considered by Nambu satisfied a certain generalization of the Jacobi identity, that was later dubbed Filippov identity. A cohomology theory for $n$-Lie algebras was then constructed by Takhtajan \cite{Tak} based on the Chevalley-Eilenberg complex. This cohomology theory in particular characterizes deformations of $n$-Lie algebras providing a parallel with the deformation theory of (binary) Lie algebras due to Nijenhuis-Richardson \cite{NR}, which was itself based on the analogous deformation theory for associative algebras of 
Gerstenhaber \cite{Gerstenhaber}.

	In this article we consider $n$-ary quandle objects in the category of vector spaces. These are analogues of quandles where the defining axioms are translated from the symmetric monoidal category of sets with the cartesian product, to that of vector spaces with tensor product. More specifically, we are interested in the property of self-distributivity (in the categorical setting) which directly corresponds to Reidemeister move III. We call these structures {\it self-distributive objects}, or SD objects for short. When we want to mention explicitly the arity of the SD object, we will write $n$-SD object. We define a cohomology theory for SD objects in vector spaces that generalizes the cohomology theory of \cite{CES}, and we show that this cohomology characterizes the deformation theory of SD objects, similarly to the aforementioned cases of associative and Lie algebras. We give a general construction that derives $n$-SD objects from $n$-Lie algebras. We relate the deformation theory of $n$-Lie algebras to the deformation theory of the corresponding $n$-SD object and show that under mild hypotheses the second cohomology groups are isomorphic. Then we consider simultaneous deformations of SD objects, where both coalgebra and SD morphism are deformed together. This provides a more general class of SD deformations. We use these results to obtain deformations of YB operators arising from the $n$-SD objects and we obtain a relation with the second YB cohomology group. Along the way, we consider detailed computations in low dimensions to show that the theory is nontrivial and give tables of computatoins of the SD cohomology groups of the Lie algebras appearing in the Bianchi classification.
	
	The starting point of this article are results found in \cite{CCES}, where it was given the construction of SD objects from Lie algebras in the binary case. We also point out that in \cite{CCES} the cohomology defined becomes substantially equivalent to the binary case of the simultaneous deformations proposed here. We observe that other than considering the extended case to $n$-ary structures, we also focus on the relations between the deformation theory (i.e. cohomology) of the $n$-Lie structures and their corresponding  $n$-ary quandles. In addition, we consider in detail the application of this theory to deformations of YB operators and their second cohomology groups.
	
	The article is structured as follows. In Section~\ref{sec:Lie_deform} we recall some basic definitions regarding the deformation of Lie algebras, and cochain complexes of $n$-ary Lie algebras. Section~\ref{Prelim} gives a brief review of self-distributive objects in symmetric monoidal category needed for the rest of the article.  In Section~\ref{sec:TSD_coh} the definition of cohomology of $n$-ary quandles is given, and some properties of the first and second cohomology groups are proved. In Section~\ref{sec:TSDdeform} we prove that this newly defined cohomology characterizes deformations of $n$-ary quandle objects. Section~\ref{sec:Lie_SD} is devoted to studying the relation between the cohomology of Lie algebras and the cohomology of the corresponding SD ($n$-quandle) object. Section~\ref{sec:Coalgebradeformation} is a 
	digression on deformations of coalgebra structures, which is going to be used in the subsequent Section~\ref{sec:sim_deform}, where simultaneous deformations of $n$-quandle objects are introduced. In Section~\ref{sec:examples} we provide detailed computations and examples in low dimensions. Here, tables containing the cohomology groups for low dimensional Lie algebras are also provided. In Section~\ref{sec:YB_deform} we apply the theory introduced and developed in this article to Yang-Baxter operator deformations, and relate the results to the second cohomology group of Yang-Baxter operators. Several detailed proofs which consist of cumbersome computations are deferred to the Appendix. 
 	
	\section{Infinitesimal deformations of $n$-ary Lie algebras}\label{sec:Lie_deform}
	
    In this section we discuss deformation theory of Lie algebras by means of cohomology theory. We recall some known traditional facts for the case of binary Lie algebras, and then pass on considering $n$-Lie algebras, in the sense of Filippov. 
	
	\subsection{Binary Lie algebras}
	
	The material of this subsection is standard, and can be found in \cite{GS,GS2,NR} for example. Let $\mathfrak g$ be a (binary) Lie algebra over the field (or ring) $\mathbb k$ with bracket $[\bullet, \bullet]: \mathfrak g\otimes \mathfrak g \longrightarrow \mathfrak g$. We extend the coefficients of $\mathfrak g$ to $\mathbb k[[\hbar]]$, the power series ring on the variable $\hbar$ 
	with coefficients on $\mathbb k$. Let $\mathfrak g'$ denote the resulting module, namely $\mathfrak g' = \mathbb k[[\hbar]]\otimes_{\mathbb k} \mathfrak g$. We define a family of maps $[\bullet, \bullet]' : \mathfrak g'\otimes \mathfrak g' \longrightarrow \mathfrak g'$ as a formal power series $\sum_n [\bullet, \bullet]_n\hbar^n$, where we set $[\bullet,\bullet]_0 = [\bullet,\bullet]$. If the brackets $[\bullet, \bullet]_n$ are such that $[\bullet, \bullet]'$ satisfies the Jacobi identity, then we say that this is a Lie deformation of the bracket $[\bullet, \bullet]$, since by definition we have that on $\mathfrak g$ the two brackets coincide. This is equivalent to saying that $[\bullet,\bullet]'$ satisfies the Jacobi identity on elements $x,y,z\in \mathfrak g$ seen as elements of $\mathfrak g'$ with coefficients in $\mathbb k \hookrightarrow \mathbb k[[\hbar]]$. Here we are interested mainly in infinitesimal deformations. These are those deformations of $[\bullet,\bullet]$ up to the maximal ideal $(\hbar^2)$ 
	of $\mathbb k[[\hbar]]$. More specifically, we consider the quotient ring 
	$R := \mathbb k[[\hbar]]/(\hbar^2)$ and let  $\tilde{\mathfrak g} := R\otimes_{\mathbb k} \mathbb k[[\hbar]]$. Then a deformation on $\tilde{\mathfrak g}$ needs to satisfy the Jacobi identity on any choice of $x,y,z\in \mathfrak g$. We set, for short, $\phi(x,y) = [x,y]_1$, the bracket corresponding to the first power of $\hbar$. Then the Jacobi identity on $x,y,z$ gives 
	\begin{eqnarray*}
	[[x,y]',z]' + [[y,z]',x]' + [[z,x]',y]' &=& [[x,y]+\phi(x,y),z]' + [[y,z]+\phi(y,z),x]' + [[z,x]+\phi(z,x),y]'\\
	&=& [[x,y],z] + [[y,z],x] + [[z,x],y] + \hbar[\phi(x,y),z] + \hbar[\phi(y,z),x]\\ 
	&& + \hbar[\phi(z,x),y]  + \hbar\phi([x,y],z) + \hbar\phi([y,z],x) + \hbar\phi([z,x],y)\\
	&& + \hbar^2\phi(\phi(x,y),z) + \hbar^2\phi(\phi(y,z),x) + \hbar^2\phi(\phi(z,x),y)\\
	&=& 0.
	\end{eqnarray*}
	The previous equation is usually interpreted as follows. We set $\delta \phi(x,y,z) = [\phi(x,y),z] + [\phi(y,z),x]+ [\phi(z,x),y]  + \phi([x,y],z) + \phi([y,z],x) + \phi([z,x],y)$ and also (assuming ${\rm char}\ \mathbb k\neq 2$) set $1/2[\phi,\phi](x,y,z) = \phi(\phi(x,y),z) + \phi(\phi(y,z),x) + \phi(\phi(z,x),y)$. Since the coefficient of the term $1/2[\phi,\phi]$ is quadratic in the ``infinitesimal'' deformation parameter $\hbar$, it follows that $[\bullet,\bullet]'$ is an infinitesimal deformation if and only if $\delta^2 \phi = 0$. This is the $2$-cocycle condition for Lie cochains $\phi: \mathfrak g\otimes \mathfrak g \longrightarrow \mathfrak g$ with coefficients in $\mathfrak g$.
	
	\subsection{$n$-Lie algebras} 
	
	The case of higher arity Lie algebras was first considered in \cite{Tak}. We use a slightly modified reasoning which leads to definitions that are more suitable for our purposes in the rest of the present article. Namely, we do not consider a homology theory and dualize it, but rather we direcly follow the deformation theory point of view of Gerstenhaber and Schack and adapt it to the higher arity case. We explicitly write the formulas for ternary brackets, since this generalizes directly to the $n$-ary case. The following treatment is closely related to that of Takhtajan in \cite{Tak}. 
	
	As before, we consider a Lie algebra with (ternary) bracket $[\bullet, \bullet, \bullet]: \mathfrak g\otimes \mathfrak g\otimes \mathfrak g\longrightarrow \mathfrak g$, see \cite{Fil}. We extend coefficients to the ring $\mathbb k[[\hbar]]$ and quotient out by the maximal ideal $(\hbar^2)$, with the intention of finding deformations that are infinitesimal of order $1$ in $\hbar$. We also set, for simplicity, $\phi(x,y,z) = [x,y,z]_1$, for the ternary bracket of order $1$. A deformed bracket is now required to satisfy the Filippov idenity (FI) on $x,y,z,w,u\in \mathfrak g$. That is, we have 
	\begin{eqnarray*}
	[[x,y,z]',w,u]' = [[x,w,u]',y,z]' + [x,[y,w,u]',z]' + [x,y, [z,w,u]']'.
	\end{eqnarray*}
	Using the definition of $[\bullet,\bullet,\bullet]'$ we obtain, as in the case of binary deformation, the following. For the left hand side of FI we have
	\begin{eqnarray*}
	[[x,y,z]',w,u]' &=& [[x,y,z],w,u] + \hbar[\phi(x,y,z),w,u] + \hbar\phi([x,y,z],w,u)\\ 
	&& + \hbar^2 \phi(\phi(x,y,z),w,u).
	\end{eqnarray*}
	For the right hand side of FI we have
	\begin{eqnarray*}
	\lefteqn{[[x,w,u]',y,z]' + [x,[y,w,u]',z]' + [x,y, [z,w,u]']'}\\
	 &=& [[x,w,u],y,z] + [x,[y,w,u],z] + [x,y, [z,w,u]] + \hbar[\phi(x,w,u),y,z]\\ && + \hbar[x,\phi(y,w,u),z] + \hbar[x,y,\phi(z,w,u)] + \hbar \phi(x,[y,w,u],z) + \hbar \phi(x,y,[z,w,u])\\
	&& +\hbar \phi([x,w,u],y,z) + \hbar^2\phi(\phi(x,w,u),y,z) + \hbar^2\phi(x,\phi(y,w,u),z) + \hbar^2\phi(x,y,\phi(z,w,u)).
	\end{eqnarray*}
\begin{sloppypar}
	Neglecting terms of order $2$, FI holds if and only if 
	\begin{eqnarray*}
		\delta^2 \phi(x,y,z,w,u) &:=& [\phi(x,y,z),w,u] + \phi([x,y,z],w,u) -[x,\phi(y,w,u),z]\\
		&&- [x,y,\phi(z,w,u)] -\phi([x,w,u],y,z) - \phi(x,[y,w,y],z) - \phi(x,y,[z,w,u]) \\
		&=& 0.
	\end{eqnarray*}
	 This is defined to be the $2$-cocycle condition for the $3$-Lie algebra $\mathfrak g$ with coefficients in $\mathfrak g$.
	\end{sloppypar}
	
	\subsection{$2$-cocycles and cohomology}
	
	Let $C^2_n(\mathfrak g; \mathfrak g)$, with $n = 2,3$ be the group of $2$-cochains with coefficients in $\mathfrak g$. In other words, $C^2_n(\mathfrak g; \mathfrak g)$ is the space of linear maps $\mathfrak g^{\otimes n} \longrightarrow \mathfrak g$. As usual, it is clear that the $2$-cocycle condition for $n$-Lie algebras, $n = 2,3$, is closed under addition and it determines an additive subgroup of cocycles $Z^2_n(\mathfrak g; \mathfrak g) \subset C^2_n(\mathfrak g; \mathfrak g)$ where $n=2,3$ depending on the arity of $\mathfrak g$.
	
	Let us define now the group of $1$-cochains $C^1_n(\mathfrak g; \mathfrak g) = {\rm Hom}_{\mathbb k}(\mathfrak g,\mathfrak g)$. We define the first differential $\delta^1 : C^1_n(\mathfrak g; \mathfrak g) \longrightarrow C^2_n(\mathfrak g; \mathfrak g) $ as follows. For $n=2$, we set $\delta^1f(x,y) = f([x,y]) - [f(x),y] - [x,f(y)]$, while for $n = 3$ we set $\delta^1f(x,y,z) =  f([x,y,z]) - [f(x), y, z] - [x,f(y), z] - [x,y, f(z)]$. 
	
	Clearly, while the $2$-cocycle condition is the obstruction for a bracket to be infinitesimally deformed, we have that the $1$-cocycle condition is the obstruction for a $1$-cochain to be a Lie algebra derivation of $\mathfrak g$, where the notion of derivation in the $n$-ary case is a straightforward generalization of the usual binary definition. We define $\delta^1 C^1_n(\mathfrak g; \mathfrak g) = B^2_n(\mathfrak g; \mathfrak g) $, the subgroup of coboundaries.
	
	Direct computations show that $\delta^2 \delta^1 = 0$, hence coboundaries 
	 are also $2$-cocycles. Setting $H^2_n(\mathfrak g; \mathfrak g) = Z^2_n (\mathfrak g; \mathfrak g)/ B^2_n(\mathfrak g; \mathfrak g)$ we have the second cohomology group of $\mathfrak g$ with coefficients in $\mathfrak g$. The binary case gives the usual cohomology of Lie algebras, i.e. the dual to the Eilenberg-Chevalley complex. The ternary case closely parallels the deformation theoretic interpretation of the binary Lie cohomology and gives a definition of ternary $2$-cohomology that suits well our purposes. 
	
	The definitions for the case $n=3$ extend easily to arbitrary $n$, therefore providing a notion of second cohomology group for $n$-Lie algebras. 
	
	Recall that two deformations of a $2$-Lie bracket are said to be equivalent if there exists an isomorphism between the two deformed Lie structures that fixes the zero degree bracket. For higher arity we define equivalent deformations mutatis mutandis. We have the following standard result. 
	
	\begin{theorem}\label{thm:Liedeform}
		Let $\mathfrak g$ be an $n$-Lie algebra. Then, infinitesimal deformations of $\mathfrak g$ are classified by the second cohomology group $H^2_n(\mathfrak g;\mathfrak g)$. 
	\end{theorem}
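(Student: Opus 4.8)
The plan is to prove the classification in two steps, matching the two layers of structure in $H^2_n = Z^2_n/B^2_n$: first I would identify infinitesimal deformations with $2$-cocycles, and then I would identify the equivalence relation among deformations with the coboundary relation. The statement then follows by passing to the quotient.

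For the first step, essentially all of the work is already contained in the computation preceding the statement. An infinitesimal deformation is, by definition, a bracket of the form $[\bullet,\dots,\bullet]' = [\bullet,\dots,\bullet] + \hbar\,\phi$ on $\tilde{\mathfrak g} = R\otimes_{\mathbb k}\mathfrak g$ with $R = \mathbb k[[\hbar]]/(\hbar^2)$, where $\phi \in C^2_n(\mathfrak g;\mathfrak g)$. The displayed expansion of the Filippov identity shows that, after discarding the terms of order $\hbar^2$ (which vanish in $R$), the identity holds if and only if $\delta^2\phi = 0$. Thus the assignment $[\bullet,\dots,\bullet]' \mapsto \phi$ is a bijection between infinitesimal deformations of $\mathfrak g$ and the group of $2$-cocycles $Z^2_n(\mathfrak g;\mathfrak g)$.

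For the second step, I would take an equivalence of deformations to be an $R$-linear automorphism $T\colon \tilde{\mathfrak g}\to\tilde{\mathfrak g}$ reducing to the identity modulo $\hbar$ and intertwining the two deformed brackets; since $\hbar^2 = 0$, any such $T$ has the form $T = \id + \hbar f$ with $f \in C^1_n(\mathfrak g;\mathfrak g)$, and is automatically invertible with inverse $\id - \hbar f$. Writing $[\bullet,\dots]'_\phi = [\bullet,\dots] + \hbar\phi$ and $[\bullet,\dots]'_\psi = [\bullet,\dots] + \hbar\psi$, I would impose $T([x_1,\dots,x_n]'_\psi) = [Tx_1,\dots,Tx_n]'_\phi$ on elements of $\mathfrak g$, expand both sides by multilinearity, and discard the $\hbar^2$ terms. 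The $\hbar^0$ coefficients agree automatically because $T$ reduces to the identity, while comparing the $\hbar^1$ coefficients yields
\[
\psi(x_1,\dots,x_n) - \phi(x_1,\dots,x_n) = \sum_{i=1}^n [x_1,\dots,f(x_i),\dots,x_n] - f([x_1,\dots,x_n]),
\]
whose right-hand side is exactly $-\delta^1 f(x_1,\dots,x_n)$ by the definition of the first differential. Hence $\phi$ and $\psi$ define equivalent deformations precisely when $\phi - \psi \in B^2_n(\mathfrak g;\mathfrak g)$.

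Combining the two steps, the set of equivalence classes of infinitesimal deformations is in bijection with $Z^2_n(\mathfrak g;\mathfrak g)/B^2_n(\mathfrak g;\mathfrak g) = H^2_n(\mathfrak g;\mathfrak g)$, as claimed. I expect the only genuinely delicate point to be the bookkeeping in the second step: one must verify that the normalization ``$T$ fixes the zero-degree bracket'' legitimately restricts attention to automorphisms of the form $\id + \hbar f$ (rather than to arbitrary automorphisms of $\mathfrak g$ reduced modulo $\hbar$), so that the resulting ambiguity is measured by $\delta^1 C^1_n$ and not by a larger group. Everything else is a routine termwise expansion entirely parallel to the binary and ternary computations carried out above.
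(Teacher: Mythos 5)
Your proposal is correct and follows the same route as the paper's (deliberately sketched) proof: identify infinitesimal deformations with elements of $Z^2_n(\mathfrak g;\mathfrak g)$ via the Filippov-identity expansion, then show equivalences $T = \id + \hbar f$ shift the cocycle by $\pm\delta^1 f$, so that classes correspond to $H^2_n(\mathfrak g;\mathfrak g)$. Your explicit order-$\hbar$ computation in the second step, giving $\psi - \phi = -\delta^1 f$, is exactly the verification the paper declares ``is seen directly, for arbitrary $n$,'' and your sign matches the paper's convention for $\delta^1$.
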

    \begin{proof}
    	The proof adapts the methods of the binary case to higher arities, and is therefore only sketched. From the discussion above, it is clear that infinitesimal deformations arise if and only if $[\bullet, \ldots, \bullet]' = [\bullet, \ldots, \bullet] + \hbar\phi(\bullet, \ldots, \bullet)$ for some $2$-cocycle $\phi$ of appropriate arity. This was seen explicitly for $n = 2,3$, while the general result for arbitrary $n$ is analogous. To complete the characterization one needs to prove that two extensions of $\mathfrak g$ are equivalent if and only if the corresponding $2$-cocycles differ by a term $\delta^1 f$, for some $f : \mathfrak g\longrightarrow \mathfrak g$. In fact, it is enough to show that $\phi$ defines the trivial deformation if and only if $\phi = \delta^1f$. This is seen directly, for arbitrary $n$. 
    \end{proof}
    
    \section{Preliminaries on self-distributive objects}\label{Prelim}
    
    In this section we recall some of the definitions that will be used throughout the article. This material can be found in \cite{AZ,EZ,CCES,ESZ}. 
    
    \begin{definition}
        {\rm 
        An {\it $n$-ary quandle}, or $n$-quandle for short, is a set $X$ with an $n$-ary map 
        $T: X^{\times n} \longrightarrow X$ satisfying the three axioms:
    \begin{itemize}
        \item
         It holds 
         $$T(T(x_1,\ldots x_n), x_{n+1}, \ldots, x_{2n-1}) = T(T(x_1, x_{n+1}, \ldots , x_{2n-1}), \ldots , T(x_n, x_{n+1}, \ldots x_{2n-1})),$$ for all $x_i\in X$. This property is called n-ary self-distributivity;
         \item 
         The map $T_{(x_2,\ldots , x_n)}: X\longrightarrow X$ obtained by multiplying on the right, i.e. $T_{(x_2,\ldots , x_n)}(x) := T(x,x_2,\ldots , x_n)$, is bijective for all choices of $(x_2,\ldots , x_n)\in X^{\times {(n-1)}}$;
         \item 
         For all $x\in X$, we have that $T(x,\ldots ,x) = x$.
    \end{itemize} 
        }
    \end{definition}
    
    \begin{remark}
        {\rm 
        The case $n=2$, which is known simply as ``quandle'' in the literature, has found profound applications in knot theory (see \cite{Joy,Mat,CJKLS, CEGS}). In fact, the axioms given above for $n=2$ are seen to be an algebraization of the Reidemeister moves (III, II and I, respectively). Here we chose to list the axioms in reversed order than usually found in the literature, because algebraically, a set with a map $T$ that satisfies only the first axiom is called a {\it shelf}, or {\it self-distributive} set, while if $T$ satisfies the first two axioms is said to be a {\it rack}. We will use $n$-shelf and $n$-rack to specify the arity of the operation $T$, if needed.
        }
    \end{remark}
    
    The definition of shelf, rack and quandle generalizes to symmetric monoidal categories as seen in \cite{ESZ,EZ}. The SD property, which is the one of main interests in the present article, takes the form of the following commutative diagram 
    \begin{center}
	\begin{tikzcd}
		&X^{\boxtimes n^2}\arrow{dl}[swap]{\shuffle_n} & &X^{\boxtimes (2n-1)}\arrow{ll}[swap]{\mathbb 1^{\boxtimes n}\boxtimes \Delta_n^{\boxtimes (n-1)}}\arrow {rd}{W\boxtimes \mathbb{1}^{\boxtimes (n-1)}} &  \\
		 X^{\boxtimes n^2}\arrow{dd}[swap]{W\boxtimes \cdots \boxtimes W} & & & & X^{\boxtimes n}\arrow{dd}{W}\\
		 & & & &\\
		 X^{\boxtimes n}\arrow{rrrr}[swap]{W}& & & &X 
		\end{tikzcd} 
	\end{center}
    where $W: X^{\boxtimes n} \longrightarrow X$ is a morphism in the symmetric monoidal category $(\mathcal C, \boxtimes)$, $\Delta$ is a comultiplication of $X$, and the morphism $\shuffle_n$ is obtained by a permutation of the factors (using the symmetry of the category). The specific form of $\shuffle_n$ is given in \cite{ESZ}. Observe that in this case $X$ is endowed both with an $n$-ary ``operation'', and with a (binary) comultiplication. The comultiplication is used to create $n$-ary comultiplications by compositions. In fact, the same theory where $\Delta$ is $n$-ary and not decomposable in co-operations of lower degree can be defined analogously, and it has not yet been explored, to the authors' knowledge. The need for a coalgebra structure arises from the fact that the set-theoretic case uses repetitions of the variables, which can be thought of as a comultiplication (diagonal) in the symmetric monoidal category of sets with cartesian product. 
    
    The main interest in $n$-ary SD structures (similarly to the binary set-theoretic case), is that they produce solutions to the Yang-Baxter equation (YBE). When the structure is a (categorical) rack, then these solutions are invertible, and they are therefore YB operators. See for instance \cite{EZ} for a general approach. In \cite{AZ}, which constitutes the starting point of the present article, the ternary approach was considered in the category of vector spaces and the YB operators studied were derived from Lie algebras, based on work found in \cite{CCES}. 
    
    We briefly recall the construction in \cite{AZ} that produces SD objects in the category of vector spaces from Lie algebras, and associated YB operators. Let $\mathfrak g$ be an $n$-Lie algebra over the ground field $\mathbb k$, and define $X := \mathbb k\oplus \mathfrak g$. 
	We define a comultiplication $\Delta$ (see also \cite{CCES}) as
	$$
	(a,x) \mapsto (a,x)\otimes (1,0) + (1,0) \otimes (0,x),
	$$
	and counit $\epsilon$ as 
	$$
	(a,x)\mapsto a.
	$$
	With these comultiplication and counit, $X$ is a coalgebra in the category of vector spaces. Iterating the comultiplication we can define a ternary co-operation $\Delta_3$. Observe that $\Delta$ (hence $\Delta_3$) is cocommutative. We will use this coalgebra structure on the space $X$ defined above throughout the article. However, we point out that the cohomology that we are going to define to study deformations of SD structures does not depend on $\Delta$, but it only uses the fact that we start from an SD object. The SD structure associated to an $n$-Lie algebra $\mathfrak g$ then is then $(X,\Delta, T)$, where $X$ and $\Delta$ are as above, and for all $a_i \in \mathbb k$ and $x_i \in \mathfrak g$, the map $T$ is defined on simple tensors as
	$$
	(a_1,x_1)\otimes \cdots \otimes (a_n,x_n) \mapsto (\prod_i a_i, \prod_{i\neq1} a_ix_1 + [x_1,\ldots ,x_n]),
	$$
	where $[\bullet, \ldots , \bullet]$ indicates the $n$-ary bracket of $\mathfrak g$. In fact, starting from an $n$-ary structure, one can define a $(2n-1)$-ary SD structure as it was shown in \cite{ESZ} for symmetric monoidal categories. The same procedure can be applied in this case. Therefore, we see that $n$-ary structures in the category of vector spaces are abundant.  In \cite{AZ} it is detailed how to derive a YB operator from a given TSD structure associated to a ternary Lie algebra. The generalization to the $n$-ary case is analogous. This will be reconsidered in Section~\ref{sec:YB_deform}, where we will show the main application of the cohomology theory introduced and studied in this article. 
    
	\section{SD cohomology with SD coefficients}\label{sec:TSD_coh}
	
	We define now a cohomology theory of $n$-SD structues with coefficients in $n$-SD objects that is suitable for the deformation theory of SD operations. The fundamental idea is the same as for $n$-Lie algebras above. We explicitly write the equations for the case $n=3$, since the whole theory generalizes directly to the case of arbitrary $n$ (including $n=2$). An $n$-ary SD structure will be denoted by $n$SD, but the binary and ternary cases, due to their importance in this article will also be called BSD and TSD, respectively.
	
	Let $(X,T,\Delta_3)$ be a TSD structure in the symmetric monoidal category of modules over a unital ring $\mathbb k$, where $T\in {\rm Hom}_{\mathbb k}(X^{\otimes 3},X)$ and $\Delta_3\in {\rm Hom}_{\mathbb k}(X,X^{\otimes 3})$.
	For $n=1,2,3$ we define the first and second cochain modules of $X$ with coefficients in $X$ as follows. Let ${\rm Hom}_{\mathbb k}(X^{\otimes (2n-1)}, X)$ denote the spaces of $\mathbb k$-linear maps between $X^{\otimes (2n-1)}$ and $X$. We define $C^1_{\rm TSD}(X;X)$, the first cochain module, to be the submodule of ${\rm Hom}_{\mathbb k}(X, X)$ consisting of ternary coderivations, where a ternary coderivation is a linear map $f: X\longrightarrow X$ satisfying $\Delta_3\circ f = (\mathbb 1^{\otimes 2}\otimes f + \mathbb 1\otimes f \otimes \mathbb 1  + f\otimes \mathbb 1^{\otimes 2})\circ \Delta_3$. This naturally generalizes the notion of binary coderivations between coalgebras to the ternary setting. Next, define $C^2_{\rm TSD}(X;X)$, the second cochain module, to be the submodule of ${\rm Hom}_{\mathbb k}(X^3, X)$ consisting of maps satisfying the condition $\Delta_3 \psi = (\psi\otimes T^{\otimes 2}+T\otimes \psi \otimes T + T^{\otimes 2}\otimes \psi)\shuffle \Delta_3^{\otimes 3}$, where $\shuffle = \bigl(\begin{smallmatrix}
	1 & 2& 3& 4 & 5& 6& 7& 8& 9\\
	1& 4& 7& 2& 5& 8& 3& 6& 9  
	\end{smallmatrix}\bigr)$, i.e. the permutation used to endow$X^{\otimes 3}$ with the natural ternary coalgebra structure induced by $\Delta_3$. Finally, we let $C^3_{\rm TSD}(X;X) = {\rm Hom}_{\mathbb k}(X^{\otimes 5}, X)$. 
	
	We define now differentials between cochain modules $\delta^1: C^1_{\rm TSD}(X;X) \longrightarrow C^2_{\rm TSD}(X;X)$ and $\delta^2: C^2_{\rm TSD}(X;X)\longrightarrow C^3_{\rm TSD}(X;X)$ as follows.
	
	For $f: X\longrightarrow X$, ternary coderivation of $X$, we set
	\begin{eqnarray*}
	\delta^1f(x\otimes y\otimes z) &=& f(T(x\otimes y\otimes z)) - T(f(x)\otimes y\otimes z) - T(x\otimes f(y)\otimes z) - T(x\otimes y\otimes f(z)).
	\end{eqnarray*}
    Notice that the condition $\delta^1 f = 0$ is the obstruction for $f$ to be a ternary derivation with respect to the ternary algebraic structure $T$. 
    
    For $\psi: X^{\otimes 3} \longrightarrow X$, coalgebra morphism between $X^{\otimes 3}$ and $X$, we set
    \begin{eqnarray*}
    \delta^2 \psi(x\otimes y\otimes z\otimes w\otimes u) &=& T(\psi(x\otimes y\otimes z)\otimes w\otimes u) + \psi(T(x\otimes y\otimes z)\otimes w\otimes u)\\
    && - \psi(T(x\otimes w^{(1)}\otimes u^{(1)})\otimes T(y\otimes w^{(2)}\otimes u^{(2)})\otimes T(z\otimes w^{(3)}\otimes u^{(3)}))\\
    &&- T(\psi(x\otimes w^{(1)}\otimes u^{(1)})\otimes T(y\otimes w^{(2)}\otimes u^{(2)})\otimes T(z\otimes w^{(3)}\otimes u^{(3)}))\\
    && - T(T(x\otimes w^{(1)}\otimes u^{(1)})\otimes \psi(y\otimes w^{(2)}\otimes u^{(2)})\otimes T(z\otimes w^{(3)}\otimes u^{(3)}))\\
    && -  T(T(x\otimes w^{(1)}\otimes u^{(1)})\otimes T(y\otimes w^{(2)}\otimes u^{(2)})\otimes \psi(z\otimes w^{(3)}\otimes u^{(3)})),
    \end{eqnarray*}
	where we have used Sweedler's notation for the comultiplication of $X$, in the form 
	$\Delta(x) = x^{(1)}\otimes x^{(2)} \otimes x^{(3)}$. The $2$-cocycle condition is shown diagrammatically in Figure~\ref{fig:2-cocy}. See also a similar condition (in multiplicative notation) in \cite{EZ}.
	
	\begin{figure}
	    \centering
	    $0 =
	    \begin{tikzpicture}[scale=0.50,baseline={([yshift=0cm]current bounding box.center)},vertex/.style={anchor=base,
    	circle,fill=black!25,minimum size=18pt,inner sep=2pt}]
	    \draw (-3,2) -- (0,0);
	    \draw (-1.5,2) -- (0,0);
	    \draw (0,2) -- (0,0);
	    \draw (3,2) -- (0,0);
	    \draw (1.5,2) -- (0,0);
	    \draw (0,0) -- (0,-2);
	    \draw[fill=black] (0,0) circle (5pt);
	    \node (a) at (-1.75,-.250) {$\delta^2(\psi)$}; 
	    \end{tikzpicture}
	    =
	    \begin{tikzpicture}[scale=0.50,baseline={([yshift=0cm]current bounding box.center)},vertex/.style={anchor=base,
    	circle,fill=black!25,minimum size=18pt,inner sep=2pt}] 
    	\draw (-3,3) -- (-2,2);
    	\draw (-2,3) -- (-2,2);
    	\draw (-1,3) -- (-2,2);
    	\draw (-2,2) -- (0,0);
    	\draw (0,3) -- (0,0);
    	\draw (2,3) -- (0,0);
    	\draw (0,0) -- (0,-2);
    	\draw[fill=black] (0,0) circle (5pt);
    	\draw[fill=black] (-2,2) circle (5pt);
    	\node (a) at (-.5,-.25) {$T$};
    	\node (a) at (-2.5,1.75) {$\psi$};
	    \end{tikzpicture}
	    +
	    \begin{tikzpicture}[scale=0.50,baseline={([yshift=0cm]current bounding box.center)},vertex/.style={anchor=base,
    	circle,fill=black!25,minimum size=18pt,inner sep=2pt}] 
    	\draw (-3,3) -- (-2,2);
    	\draw (-2,3) -- (-2,2);
    	\draw (-1,3) -- (-2,2);
    	\draw (-2,2) -- (0,0);
    	\draw (0,3) -- (0,0);
    	\draw (2,3) -- (0,0);
    	\draw (0,0) -- (0,-2);
    	\draw[fill=black] (0,0) circle (5pt);
    	\draw[fill=black] (-2,2) circle (5pt);
    	\node (a) at (-.5,-.25) {$\psi$};
    	\node (a) at (-2.5,1.75) {$T$};
	    \end{tikzpicture}
	    $\\
	    $
	    -
	    \begin{tikzpicture}[scale=0.50,baseline={([yshift=0cm]current bounding box.center)},vertex/.style={anchor=base,
    	circle,fill=black!25,minimum size=18pt,inner sep=2pt}] 
    	\draw (-4,3) -- (-3,2);
    	\draw (-3,3) -- (-3,2);
    	\draw (-2,3) -- (-3,2);
    	\draw (-3,2) -- (0,0);
        \draw (-1,3) -- (0,2);
        \draw (0,3) -- (0,2);
        \draw (1,3) -- (0,2);
        \draw (0,2) -- (0,0);
        \draw (4,3) -- (3,2);
    	\draw (3,3) -- (3,2);
    	\draw (2,3) -- (3,2);
    	\draw (3,2) -- (0,0);
    	\draw (0,0) -- (0,-2);
    	\draw[fill=black] (0,0) circle (5pt);
    	\draw[fill=black] (-3,2) circle (5pt);
    	\draw[fill=black] (3,2) circle (5pt);
    	\draw[fill=black] (0,2) circle (5pt);
    	\node (a) at (-.5,-.25) {$\psi$};
    	\node (a) at (-3.5,1.75) {$T$};
    	\node (a) at (-.5,1.75) {$T$};
    	\node (a) at (3.5,1.75) {$T$};
        \draw (0,6) -- (0,5);
        \draw[fill=black] (0,5) circle (5pt);
        \node (a) at (-1.75,5.25) {$\shuffle_3\circ \Delta_3$};
        \draw (0,5) -- (0,3);
        \draw (0,5) -- (-1,3);
        \draw (0,5) -- (-2,3);
        \draw (0,5) -- (-3,3);
        \draw (0,5) -- (-4,3);
        \draw (0,5) -- (1,3);
        \draw (0,5) -- (2,3);
        \draw (0,5) -- (3,3);
        \draw (0,5) -- (4,3);
	    \end{tikzpicture}
	    -
	    \begin{tikzpicture}[scale=0.50,baseline={([yshift=0cm]current bounding box.center)},vertex/.style={anchor=base,
    	circle,fill=black!25,minimum size=18pt,inner sep=2pt}] 
    	\draw (-4,3) -- (-3,2);
    	\draw (-3,3) -- (-3,2);
    	\draw (-2,3) -- (-3,2);
    	\draw (-3,2) -- (0,0);
        \draw (-1,3) -- (0,2);
        \draw (0,3) -- (0,2);
        \draw (1,3) -- (0,2);
        \draw (0,2) -- (0,0);
        \draw (4,3) -- (3,2);
    	\draw (3,3) -- (3,2);
    	\draw (2,3) -- (3,2);
    	\draw (3,2) -- (0,0);
    	\draw (0,0) -- (0,-2);
    	\draw[fill=black] (0,0) circle (5pt);
    	\draw[fill=black] (-3,2) circle (5pt);
    	\draw[fill=black] (3,2) circle (5pt);
    	\draw[fill=black] (0,2) circle (5pt);
    	\node (a) at (-.5,-.25) {$T$};
    	\node (a) at (-3.5,1.75) {$\psi$};
    	\node (a) at (-.5,1.75) {$T$};
    	\node (a) at (3.5,1.75) {$T$};
        \draw (0,6) -- (0,5);
        \draw[fill=black] (0,5) circle (5pt);
        \node (a) at (-1.75,5.25) {$\shuffle_3\circ \Delta_3$};
        \draw (0,5) -- (0,3);
        \draw (0,5) -- (-1,3);
        \draw (0,5) -- (-2,3);
        \draw (0,5) -- (-3,3);
        \draw (0,5) -- (-4,3);
        \draw (0,5) -- (1,3);
        \draw (0,5) -- (2,3);
        \draw (0,5) -- (3,3);
        \draw (0,5) -- (4,3);
	    \end{tikzpicture}
	    $\\
	    $
	    -
	    \begin{tikzpicture}[scale=0.50,baseline={([yshift=-.5cm]current bounding box.center)},vertex/.style={anchor=base,
    	circle,fill=black!25,minimum size=18pt,inner sep=2pt}] 
    	\draw (-4,3) -- (-3,2);
    	\draw (-3,3) -- (-3,2);
    	\draw (-2,3) -- (-3,2);
    	\draw (-3,2) -- (0,0);
        \draw (-1,3) -- (0,2);
        \draw (0,3) -- (0,2);
        \draw (1,3) -- (0,2);
        \draw (0,2) -- (0,0);
        \draw (4,3) -- (3,2);
    	\draw (3,3) -- (3,2);
    	\draw (2,3) -- (3,2);
    	\draw (3,2) -- (0,0);
    	\draw (0,0) -- (0,-2);
    	\draw[fill=black] (0,0) circle (5pt);
    	\draw[fill=black] (-3,2) circle (5pt);
    	\draw[fill=black] (3,2) circle (5pt);
    	\draw[fill=black] (0,2) circle (5pt);
    	\node (a) at (-.5,-.25) {$T$};
    	\node (a) at (-3.5,1.75) {$T$};
    	\node (a) at (-.5,1.75) {$\psi$};
    	\node (a) at (3.5,1.75) {$T$};
        \draw (0,6) -- (0,5);
        \draw[fill=black] (0,5) circle (5pt);
        \node (a) at (-1.75,5.25) {$\shuffle_3\circ \Delta_3$};
        \draw (0,5) -- (0,3);
        \draw (0,5) -- (-1,3);
        \draw (0,5) -- (-2,3);
        \draw (0,5) -- (-3,3);
        \draw (0,5) -- (-4,3);
        \draw (0,5) -- (1,3);
        \draw (0,5) -- (2,3);
        \draw (0,5) -- (3,3);
        \draw (0,5) -- (4,3);
	    \end{tikzpicture}
	    -
	    \begin{tikzpicture}[scale=0.50,baseline={([yshift=-.5cm]current bounding box.center)},vertex/.style={anchor=base,
    	circle,fill=black!25,minimum size=18pt,inner sep=2pt}] 
    	\draw (-4,3) -- (-3,2);
    	\draw (-3,3) -- (-3,2);
    	\draw (-2,3) -- (-3,2);
    	\draw (-3,2) -- (0,0);
        \draw (-1,3) -- (0,2);
        \draw (0,3) -- (0,2);
        \draw (1,3) -- (0,2);
        \draw (0,2) -- (0,0);
        \draw (4,3) -- (3,2);
    	\draw (3,3) -- (3,2);
    	\draw (2,3) -- (3,2);
    	\draw (3,2) -- (0,0);
    	\draw (0,0) -- (0,-2);
    	\draw[fill=black] (0,0) circle (5pt);
    	\draw[fill=black] (-3,2) circle (5pt);
    	\draw[fill=black] (3,2) circle (5pt);
    	\draw[fill=black] (0,2) circle (5pt);
    	\node (a) at (-.5,-.25) {$T$};
    	\node (a) at (-3.5,1.75) {$T$};
    	\node (a) at (-.5,1.75) {$T$};
    	\node (a) at (3.5,1.75) {$\psi$};
        \draw (0,6) -- (0,5);
        \draw[fill=black] (0,5) circle (5pt);
        \node (a) at (-1.75,5.25) {$\shuffle_3\circ\Delta_3$};
        \draw (0,5) -- (0,3);
        \draw (0,5) -- (-1,3);
        \draw (0,5) -- (-2,3);
        \draw (0,5) -- (-3,3);
        \draw (0,5) -- (-4,3);
        \draw (0,5) -- (1,3);
        \draw (0,5) -- (2,3);
        \draw (0,5) -- (3,3);
        \draw (0,5) -- (4,3);
	    \end{tikzpicture}
	    $
	    \caption{String diagram for the $2$-cocycle condition of a $2$-cochain $\psi$. The strings are permuted using the natural shuffling morphism for the category of modules.}
	    \label{fig:2-cocy}
	\end{figure}
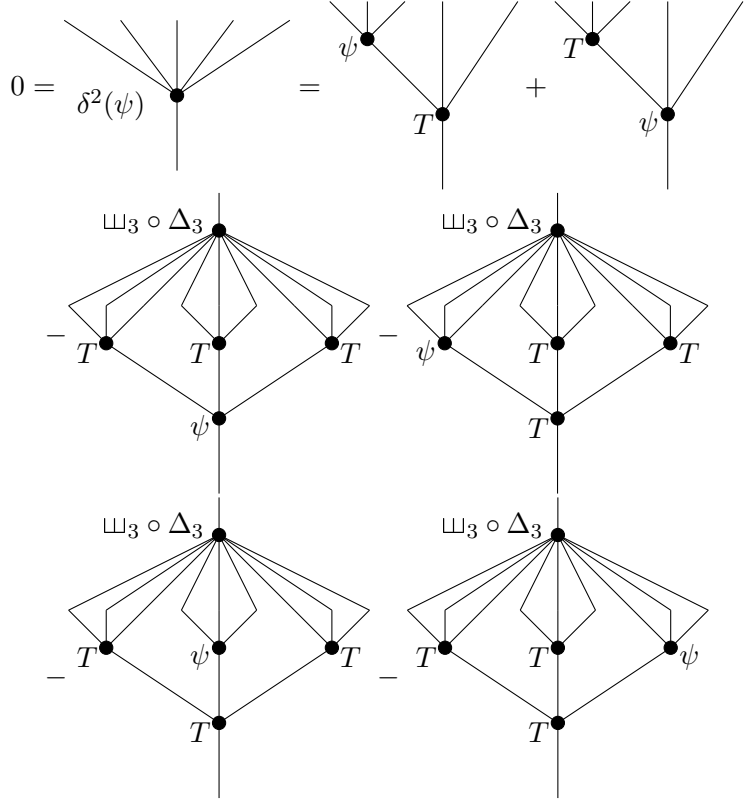
	
	\begin{lemma}\label{lem:differentials}
		The differential $\delta^1$ maps $1$-cochains into $C^2_{\rm TSD}(X;X)$. Moreover,  the composition of differentials $\delta^2\circ \delta^1$ is trivial.
	\end{lemma}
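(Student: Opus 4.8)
The plan is to argue at the level of operators on tensor powers of $X$, so that the only inputs are the two structural facts available at this stage: that $f$ is a ternary coderivation, and that $T$ is a morphism of coalgebras for the ternary comultiplication $\shuffle\circ\Delta_3^{\otimes 3}$ on $X^{\otimes 3}$, i.e. $\Delta_3\circ T = T^{\otimes 3}\,\shuffle\,\Delta_3^{\otimes 3}$. Writing $D_f := f\otimes\mathbb 1^{\otimes 2} + \mathbb 1\otimes f\otimes\mathbb 1 + \mathbb 1^{\otimes 2}\otimes f\colon X^{\otimes 3}\to X^{\otimes 3}$, the coderivation hypothesis on $f$ reads $\Delta_3\circ f = D_f\circ\Delta_3$, and the first differential becomes the compact expression $\delta^1 f = f\circ T - T\circ D_f$.

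For the first assertion I would apply $\Delta_3$ to $\delta^1 f = fT - TD_f$ and check the defining identity of $C^2_{\rm TSD}(X;X)$ term by term. On the summand $fT$, the coderivation property gives $\Delta_3 fT = D_f\,\Delta_3 T$, and the coalgebra-morphism property of $T$ rewrites this as $D_f T^{\otimes 3}\,\shuffle\,\Delta_3^{\otimes 3}$, which is exactly the $fT$-part of the required right-hand side $(\delta^1 f\otimes T^{\otimes 2} + T\otimes\delta^1 f\otimes T + T^{\otimes 2}\otimes\delta^1 f)\,\shuffle\,\Delta_3^{\otimes 3}$. On the summand $TD_f$, writing $TD_f\otimes T^{\otimes 2} + \cdots = T^{\otimes 3}\bigl(D_f\otimes\mathbb 1^{\otimes 6} + \mathbb 1^{\otimes 3}\otimes D_f\otimes\mathbb 1^{\otimes 3} + \mathbb 1^{\otimes 6}\otimes D_f\bigr)$ reduces the matching to the single identity
\[
\bigl(D_f\otimes\mathbb 1^{\otimes 6} + \mathbb 1^{\otimes 3}\otimes D_f\otimes\mathbb 1^{\otimes 3} + \mathbb 1^{\otimes 6}\otimes D_f\bigr)\,\shuffle\,\Delta_3^{\otimes 3} = \shuffle\,\Delta_3^{\otimes 3}\,D_f .
\]
The operator on the left is the total coderivation $\sum_{j=1}^{9}\mathbb 1^{\otimes(j-1)}\otimes f\otimes\mathbb 1^{\otimes(9-j)}$ on $X^{\otimes 9}$; being symmetric in the nine slots it commutes with the permutation $\shuffle$, while the coderivation property of $f$ gives $\Delta_3^{\otimes 3}D_f = \bigl(\sum_j \cdots\bigr)\Delta_3^{\otimes 3}$ slot by slot. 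Combining the two facts yields the displayed identity, and hence $\delta^1 f\in C^2_{\rm TSD}(X;X)$.

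For $\delta^2\circ\delta^1 = 0$ the conceptual reason is that $\delta^1 f$ is the infinitesimal of a \emph{trivial} deformation. Over $\mathbb k[\hbar]/(\hbar^2)$ the map $f_\hbar := \mathbb 1 + \hbar f$ is a coalgebra automorphism (its inverse is $\mathbb 1 - \hbar f$, and the coderivation property makes it compatible with $\Delta_3$), and the conjugate $T_\hbar := f_\hbar^{-1}\circ T\circ f_\hbar^{\otimes 3}$ is again self-distributive with first-order expansion $T_\hbar = T - \hbar\,\delta^1 f$. Since self-distributivity of $T_\hbar$ holds identically, its coefficient of $\hbar$ must vanish, and that coefficient is precisely $-\delta^2(\delta^1 f)$. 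To keep the proof self-contained at this point in the paper, I would instead perform the equivalent direct verification: substitute $\psi = \delta^1 f = fT - TD_f$ into the explicit formula for $\delta^2\psi$ and expand the six occurrences of $\psi$ into twenty-four summands. These split into the terms in which $f$ is outermost and those in which $f$ is inserted into an inner $T$-slot; in each family the self-distributivity axiom of $T$, used to rewrite the nested $T(T(\bullet,\bullet,\bullet),w,u)$ in terms of the comultiplied copies of $w$ and $u$, together with the coderivation identity for $f$, pairs the summands off and cancels them.

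The bookkeeping of $\shuffle$ and of the Sweedler indices $w^{(i)},u^{(i)}$ is routine; the genuinely load-bearing inputs are exactly three, namely the coderivation property of $f$, the coalgebra-morphism property of $T$, and the self-distributivity of $T$. The main obstacle is the second assertion: organizing the large collection of terms produced by $\delta^2(\delta^1 f)$ so that self-distributivity can be applied coherently across all of them, keeping careful track of how the three comultiplied copies of $w$ and $u$ are redistributed when inner and outer applications of $T$ are interchanged. Should this become unwieldy, I would follow the paper's stated convention and defer the longest part of the computation to the Appendix.
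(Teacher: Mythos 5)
Your proposal is correct, and it differs from the paper's argument in a way worth recording. The paper proves both assertions by brute-force element-wise computation in Sweedler notation, deferred to the Appendix: for the first claim it expands $\Delta_3\,\delta^1f$ on a simple tensor using the coderivation property of $f$ and the coalgebra-morphism property of $T$ and matches the result against the upper perimeter of the relevant square; for the second it expands $\delta^2(\delta^1f)$ into its many summands and cancels them in pairs. Your treatment of the first claim is the same computation repackaged as operator identities, with the one genuinely clarifying observation that the total coderivation $\sum_{j=1}^{9}\mathbb 1^{\otimes(j-1)}\otimes f\otimes\mathbb 1^{\otimes(9-j)}$ commutes with the permutation $\shuffle$ and intertwines $\Delta_3^{\otimes 3}$ via the coderivation property; this buys a shorter, less error-prone argument at no cost. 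Your treatment of the second claim is a genuinely different route that the paper does not take: you exhibit $-\delta^1 f$ as the first-order term of the conjugate $f_\hbar^{-1}\circ T\circ f_\hbar^{\otimes 3}$ by the coalgebra automorphism $f_\hbar=\mathbb 1+\hbar f$, which satisfies self-distributivity identically, so the $\hbar$-coefficient $-\delta^2(\delta^1f)$ of the self-distributivity identity must vanish. This explains \emph{why} the composite is zero rather than verifying it, and it is sound: the conjugation argument needs only that $f_\hbar$ is a coalgebra morphism (equivalent, mod $\hbar^2$, to $f$ being a coderivation) together with the self-distributivity and coalgebra-morphism properties of $T$, and the identification of the first-order obstruction with $\delta^2$ is just the definition of $\delta^2$ unwound (the same expansion the paper later performs in Theorem~\ref{thm:TSDdeform}). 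You correctly flagged the only delicate point, namely that this identification appears later in the paper's exposition, and offered the direct expansion as the in-place fallback; either version closes the argument.
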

\begin{proof}
	This is a long direct computation which we postpone to the Appendix.
\end{proof}

As a consequence of Lemma~\ref{lem:differentials}, the following is well posed.
\begin{definition}\label{def:SDcohomology}
	{\rm 
	Let $X$ be an $n$-SD structure over the unital ring $\mathbb k$. Following traditional notation, we set $Z^2_{n\rm SD}(X;X) = {\rm ker}\ \delta^2$, and $B^2_{n\rm SD}(X;X) = {\rm im}\ \delta^1$. The second cohomology group of $X$ with coefficients in $X$ is then the quotient $H^2_{n\rm SD}(X;X) = Z^2_{n\rm SD}(X;X)/B^2_{n\rm SD}(X;X)$. For the ternary case we will use the notation $H^2_{\rm TSD}(X;X)$, and similarly for cocycles and coboundaries. 
}
\end{definition}

Straightforward modifications generalize the previous definitions to the case of cohomology of $X$ with coefficients in a TSD object $A$ different from $X$ itself. The proof of Lemma~\ref{lem:differentials} is easily adapted to this case. 

\begin{remark}
    {\rm 
    We observe that Definition~\ref{def:SDcohomology} does not explicitly require $\Delta$ to be the comultiplication given above, as long as $(T,\Delta,\epsilon)$ is an SD object. In fact, Lemma~\ref{lem:differentials} does not need any special consideration regarding the actual form of $\Delta$ to hold, but only uses the SD property (see Appendix). This fact is going to play a role in the simultaneous deformations considered in Section~\ref{sec:sim_deform}. Until then, we will consider the special form for $\Delta$ described above.
    }
\end{remark}	

\begin{lemma}\label{lem:coderivation}
	Let $V$ be a vector space and let $X = \mathbb k \oplus V$ be  the coalgebra with comultiplication $\Delta(a,x) = (a,x)\otimes (1,0) + (1,0)\otimes (0,x)$. Then, a linear map $f: X\longrightarrow X$ is an $n$-ary coderivation if and only if it is of type $f(a,x) = (0,g(x))$ for some linear map $g: V \longrightarrow V$. 
\end{lemma}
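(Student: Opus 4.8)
The plan is to work throughout in the decomposition $X = \mathbb{k}e \oplus V$, where $e := (1,0)$, and to exploit that the prescribed comultiplication makes $e$ group-like and every $(0,x)$ primitive. Indeed $\Delta(e) = e\otimes e$ and $\Delta(0,x) = (0,x)\otimes e + e\otimes(0,x)$, so iterating the coassociative $\Delta$ yields $\Delta_n(e) = e^{\otimes n}$ and $\Delta_n(0,x) = \sum_{i=1}^{n} e^{\otimes(i-1)}\otimes(0,x)\otimes e^{\otimes(n-i)}$. I would record these two formulas first, since every later step amounts to evaluating the $n$-ary coderivation identity $\Delta_n\circ f = \bigl(\sum_{i=1}^{n}\mathbb{1}^{\otimes(i-1)}\otimes f\otimes\mathbb{1}^{\otimes(n-i)}\bigr)\circ\Delta_n$ on the two classes of elements $e$ and $(0,x)$, and reading off components in the induced grading of $X^{\otimes n}$ by the number of primitive tensor factors.

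For the ``if'' direction I would simply substitute $f(a,x) = (0,g(x))$, i.e.\ $f(e)=0$ and $f(0,x) = (0,g(x))$, into the identity. On $e$ every summand on the right applies $f$ to a factor $e$ and hence vanishes, matching $\Delta_n(f(e)) = 0$; on $(0,x)$ the only surviving summand on the right is the one whose $f$ lands on the unique primitive factor, and it reproduces $\Delta_n(0,g(x)) = \Delta_n(f(0,x))$. This direction requires no hypothesis on $\mathbb{k}$.

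The substantive direction is ``only if''. Writing $f(e) = (\alpha,v_0)$ and $f(0,x) = (\beta(x),g(x))$ with $\alpha\in\mathbb{k}$, $v_0\in V$, and linear maps $\beta\colon V\to\mathbb{k}$, $g\colon V\to V$, the goal is to force $\alpha=0$, $\beta=0$, and $v_0=0$. First I would show that $f$ lands in $V$ (i.e.\ $\alpha=\beta=0$) by applying $\epsilon^{\otimes(n-1)}\otimes\mathbb{1}$ to both sides and using the iterated counit axiom $(\epsilon^{\otimes(n-1)}\otimes\mathbb{1})\circ\Delta_n = \mathbb{1}$: the summand with $f$ in the last slot returns $f$, while each of the remaining $n-1$ summands collapses via the counit to $(\lambda\otimes\mathbb{1})\circ\Delta$, where $\lambda := \epsilon\circ f$. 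The identity therefore reduces to $(n-1)\,(\lambda\otimes\mathbb{1})\Delta = 0$, and evaluating on $e$ and on $(0,x)$ gives $\lambda(e)=0$ and $\lambda(0,x)=0$, that is $\alpha=\beta=0$, provided $n-1$ is invertible in $\mathbb{k}$ (in particular whenever ${\rm char}\,\mathbb{k}=0$, which covers the real and complex cases of interest).

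It remains to rule out a nonzero primitive part $v_0$ of $f(e)$, and I expect this to be the only real obstacle: once $f(e)$ is primitive, the coderivation identity evaluated on $e$ is satisfied automatically (both sides equal $\Delta_n(f(e))$) and so gives no information on $v_0$. The resolution is to evaluate the identity instead on a primitive $(0,x)$ and project onto the subspace of $X^{\otimes n}$ spanned by tensors with exactly two primitive factors. The left-hand side $\Delta_n(0,g(x))$ has no such component, whereas on the right-hand side a two-primitive contribution arises precisely when the $f$ in one slot strikes a factor $e$ (producing $v_0$) while a primitive $(0,x)$ occupies another slot; collecting these in, say, the first two tensor positions yields the relation $v_0\otimes x + x\otimes v_0 = 0$ in $V\otimes V$ for every $x\in V$. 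Choosing $x$ linearly independent from $v_0$ (or taking $x=v_0$ together with ${\rm char}\,\mathbb{k}\neq 2$ in the one-dimensional edge case) forces $v_0=0$. Combining the three vanishing statements gives $f(a,x) = (0,g(x))$, which completes the proof.
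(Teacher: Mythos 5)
Your proof is correct, and at its core it does the same thing as the paper's argument: evaluate the coderivation identity on the group-like element $(1,0)$ and on the primitives $(0,x)$, then compare tensor components. The difference is in execution and generality. The paper writes out only the binary case, matching components of $X\otimes X$ by hand and asserting that the $n$-ary case is ``substantially similar''; you treat general $n$ directly by grading $X^{\otimes n}$ by the number of primitive factors, and you isolate the scalar part of $f$ cleanly with the counit projection $\epsilon^{\otimes(n-1)}\otimes\mathbb{1}$. The payoff of that extra structure is that you expose hypotheses the paper's proof silently omits: your argument needs $n-1$ invertible in $\mathbb{k}$, and needs either $\dim V\geq 2$ or $\mathrm{char}\,\mathbb{k}\neq 2$, and these are not artifacts of your method --- the lemma as literally stated fails without them. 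For instance, the identity map $\mathbb{1}_X$ satisfies the $n$-ary coderivation identity whenever $(n-1)\cdot 1=0$ in $\mathbb{k}$ (e.g.\ $n=3$ in characteristic $2$) yet is not of the form $(0,g(x))$; and for $n=2$, $\dim V=1$, characteristic $2$, the map $f(a,x)=(0,av_0+g(x))$ with $v_0\neq 0$ is a binary coderivation that is likewise not of the required form. Since the paper only applies the lemma over $\mathbb{R}$ and $\mathbb{C}$ these caveats are harmless in context, but your proof is the more reliable one precisely because it makes them visible.
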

\begin{proof}
	We consider the binary case, since the $n$-ary one is substantially similar.
	This is a direct inspection of the coderivation condition $\Delta f(a,x) = (f\otimes \mathbb 1 + \mathbb 1\otimes f) \Delta(a,x)$, where $f(a,x) = f_0(a,x) + f_1(a,x)$ for maps $f_0 : X \longrightarrow \mathbb k$ and $f_1: X\longrightarrow V$. From the definition of $\Delta$, we have that the LHS of the coderivation condition is 
	$$\Delta f(a,x) = f(a,x)\otimes (1,0) + (1,0)\otimes (0,f_1(a,x)),$$
	 while for the RHS we have 
	 \begin{eqnarray*}
	 (f\otimes \mathbb 1 + \mathbb 1\otimes f)\Delta(a,x) &=&   (f\otimes \mathbb 1 + \mathbb 1\otimes f)[(a,x)\otimes (1,0) + (1,0)\otimes (0,x)]\\
	 &=& f(a,x)\otimes (1,0) + f(1,0)\otimes (0,x) + (a,x)\otimes f(1,0) + (1,0)\otimes f(0,x).
	 \end{eqnarray*}
 Therefore, the coderivation condition becomes
 $$
 (1,0)\otimes (0,f_1(a,x)) = f(1,0)\otimes (0,x) + (a,x)\otimes f(1,0) + (1,0)\otimes f(0,x).
 $$
 Since this equation should hold for all $x\in V$, it follows that $f(1,0) = 0$, since otherwise the RHS woud have a component in $V\otimes X$ which could not vanish. It follows that the coderivation condition gives 
 $$
 (1,0)\otimes (0,f_1(a,x)) = (1,0)\otimes f(0,x)
 $$
 which, writing $(a,x)$ as a sum in $X$, holds if and only if $af_1(1,0) + f_1(0,x) = f_0(0,x) + f_1(0,x)$. Therefore, we have found that it must hold $af(1,0) = f_0(0,x)$ which forces $f_0(0,x) = 0$, completing the proof. 
\end{proof}

Applying Lemma~\ref{lem:coderivation}, it follows that the space of $1$-cochains for the SD cohomology consists of linear maps $f: X\longrightarrow X$ that factors through the Lie algebra $\mathfrak g$ as 
\begin{center}
	\begin{tikzcd}
		X\arrow[rr,"f"]\arrow[d,"\pi_2"] & & X\\
		\mathfrak g\arrow[rr,"\tilde f"] & & \mathfrak g\arrow[u,"\iota_2"]
	\end{tikzcd}
\end{center}
for some $\tilde f$, where $\pi_2$ is the projection on the second summand of $X = \mathbb k \oplus \mathfrak g$, and $\iota_2$ is the obvious injection $\mathfrak g \hookrightarrow X = \mathbb k \oplus \mathfrak g$. In particular, this provides a direct criterion for an SD $2$-cocycle to be nontrivial as follows. Lemma~\ref{lem:coderivation} applied to a $1$-cochain $f$, along with the definition of differential $\delta_{SD}^1f$, shows that $\delta_{SD}^1f$ is trivial on $\mathbb k\otimes X$, and morover it has image on $\mathfrak g\otimes \mathfrak g$. Therefore, any $2$-cocycle $\phi$ that is nontrivial on a simple tensor of type $(1,0)\otimes (a,x)$, or such that $\phi$ has nontrivial image in $\mathbb k\otimes X + X\otimes \mathbb k$, is not cobounded. It therefore represents a nontrivial cohomological class in $H^2_{\rm SD}(X;X)$. In fact, it turns out that there are examples of $2$-cocycles of this sort. For instance, the Lie algebra Bianchi IV over the complex field has two nontrivial cohomology classes of this type, cf. Section~\ref{sec:examples}. 
\begin{remark}
    {\rm 
    We point out that a formulation of higher order cochains (along with their corresponding cohomology groups) might be attempted along the same lines, although we have not explicitly tried so. However, we are not aware of current potential applications of cohomology groups of degree larger than $3$. For degree $3$, J. Stasheff has pointed to us the possible interesting application of Gerstenhaber bracket type obstructions (elements of the third cohomology groups) to extending the deformations to higher orders. This is a particularly interesting perspective when considering the Yang-Baxter operator deformations discussed in Section~\ref{sec:YB_deform} below, especially with the target of producing link invariants, from arbitrarily deformed YB operators. We have not yet explored this promising line of research. 
    }
\end{remark}

	\section{Infinitesimal deformations of SD structures}\label{sec:TSDdeform}
	
	Once again, let us consider the ternary case for simplicity, even though the results of this section generalize to the $n$-ary case following an analogous reasoning. We show that the second cohomology group $H^2_{\rm TSD}(X;X)$ of a TSD object with coefficients in itself classifies the infinitesimal deformations of $X$, as for associative and Lie algebras. 
	
	The notion of infinitesimal deformation for a TSD object follows the same lines as for $n$-Lie algebras above. Namely, given a TSD object $X$ over a field $\mathbb k$, we extend coefficients of $X$ to $\mathbb k[[\hbar]]/(\hbar^2)$, and let us denote it as $X_\hbar$. We define a new ternary operation on $X_\hbar$ as a linear map $T' = T + \hbar \psi_1$, where $\psi_1: X^{\otimes 3} \longrightarrow X$. We say that $T'$ is an infinitesimal deformation of $T$ if $T'$ turns $X_\hbar$ into a TSD object, where the comultiplication of $X_\hbar$ is that of $X$, $\Delta_3$, extended to $X_\hbar$ by linearity. Two infinitesimal deformations $T'$ and $T''$ of $T$ are said to be equivalent if there exists an ismorphism between $(X_\hbar,T')$ and $(X_\hbar, T'')$ fixing the underlying zero degree structure.
	
	\begin{theorem}\label{thm:TSDdeform}
		Let $X$ be a TSD object over $\mathbb k$. Then, the cohomology group $H^2_{\rm TSD}(X;X)$ classifies the infinitesimal deformations of $X$.
	\end{theorem}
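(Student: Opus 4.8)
The plan is to mirror the two-step argument used for $n$-Lie algebras in Theorem~\ref{thm:Liedeform}: first establish that infinitesimal deformations of $T$ are in bijection with $2$-cocycles, and then that equivalent deformations correspond exactly to cohomologous cocycles. For the first step I would substitute $T' = T + \hbar\psi_1$ into the two requirements that $T'$ turn $X_\hbar = \mathbb k[[\hbar]]/(\hbar^2)\otimes_{\mathbb k}X$ into a TSD object. The requirement that $T'$ be a coalgebra morphism for the (undeformed) comultiplication, $\Delta_3 T' = (T')^{\otimes 3}\shuffle\Delta_3^{\otimes 3}$, expands so that its $\hbar^0$ coefficient is the coalgebra-morphism condition for $T$, while its $\hbar^1$ coefficient is exactly $\Delta_3\psi_1 = (\psi_1\otimes T^{\otimes 2}+T\otimes\psi_1\otimes T+T^{\otimes 2}\otimes\psi_1)\shuffle\Delta_3^{\otimes 3}$, i.e. the condition $\psi_1\in C^2_{\rm TSD}(X;X)$. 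Imposing self-distributivity on $T'$ and again collecting the coefficient of $\hbar^1$ (the $\hbar^0$ part being the SD identity for $T$, true by hypothesis), all terms in which both the outer and an inner occurrence of $T'$ are replaced by $\psi_1$ are quadratic in $\hbar$ and vanish modulo $(\hbar^2)$, leaving precisely the five summands in the definition of $\delta^2$. Hence $T'$ is an infinitesimal deformation if and only if $\delta^2\psi_1 = 0$, that is $\psi_1\in Z^2_{\rm TSD}(X;X)$; the remaining rack axiom is preserved automatically, since $T+\hbar\psi_1$ is invertible over $X_\hbar$ whenever $T$ is.

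For the second step I would write an isomorphism $\Phi$ of SD objects fixing the zero degree structure as $\Phi = \mathbb{1} + \hbar f$ modulo $(\hbar^2)$. Requiring $\Phi$ to be a coalgebra morphism forces, at order $\hbar^1$, that $f$ be a ternary coderivation, i.e. $f\in C^1_{\rm TSD}(X;X)$. Requiring $\Phi$ to intertwine the deformed operations, $\Phi\circ T' = T''\circ\Phi^{\otimes 3}$ with $T' = T+\hbar\psi'$ and $T'' = T+\hbar\psi''$, and reading off the coefficient of $\hbar^1$ gives $\psi''-\psi' = f\circ T - T\circ(f\otimes\mathbb{1}\otimes\mathbb{1}+\mathbb{1}\otimes f\otimes\mathbb{1}+\mathbb{1}\otimes\mathbb{1}\otimes f)$, which is exactly $\delta^1 f$. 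Thus two infinitesimal deformations are equivalent if and only if their cocycles differ by a coboundary, and in particular $T'$ is trivial precisely when $\psi' = \delta^1 f$ for some coderivation $f$. Combining the two steps, the assignment $T'\mapsto[\psi_1]$ descends to a bijection between equivalence classes of infinitesimal deformations of $X$ and classes in $H^2_{\rm TSD}(X;X)$.

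The step I expect to be the main obstacle is the bookkeeping in the first paragraph: matching the $\hbar^1$ coefficient of the deformed self-distributivity identity term-by-term with the five summands of $\delta^2\psi_1$, while correctly tracking the Sweedler components $w^{(i)},u^{(i)}$ produced by $\Delta_3$ on the repeated variables and the shuffle $\shuffle_3$. This is the categorical counterpart of the explicit $\hbar$-expansion of the Filippov identity carried out in Section~\ref{sec:Lie_deform}; the computation is routine but lengthy, and is most cleanly organized through the string-diagram presentation of Figure~\ref{fig:2-cocy}.
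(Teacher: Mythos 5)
Your proposal is correct and follows essentially the same two-step argument as the paper: expanding the coalgebra-morphism and self-distributivity conditions for $T'=T+\hbar\psi$ to order $\hbar^1$ to identify deformations with elements of $Z^2_{\rm TSD}(X;X)$, and then expanding an equivalence $\mathbb 1+\hbar f$ to identify trivial (resp.\ equivalent) deformations with coboundaries. The only cosmetic difference is that the paper writes the equivalence condition as $g\circ T'\circ (g^{-1})^{\otimes 3}=T$ and expands $g^{-1}$ as a geometric series, which amounts to the same order-one computation you describe.
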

\begin{proof}
	First, we show that $T + \hbar\psi$ defines an infinitesimal deformation of $T$ if and only if $\psi\in Z^2_{\rm TSD}(X;X)$. Let $x\otimes y\otimes z\otimes w\otimes u\in X^{\otimes 5}$ be a simple tensor. The left hand side of TSD condition on  $T' = T + \hbar\psi$ gives
	\begin{eqnarray*}
	T'(T'(x\otimes y\otimes z)\otimes w\otimes u) &=& T(T(x\otimes y\otimes z)\otimes w\otimes u) + \hbar T(\psi(x\otimes y\otimes z)\otimes w\otimes u)\\
	&& + \hbar \psi(T(x\otimes y\otimes z)\otimes w\otimes u) + \hbar^2 \psi(\psi(x\otimes y\otimes z)\otimes w\otimes u).
	\end{eqnarray*}
    The right hand side gives 
    \begin{eqnarray*}
    \lefteqn{T'(T'(x\otimes w^{(1)}\otimes u^{(1)})\otimes T'(y\otimes w^{(2)}\otimes u^{(2)})\otimes T'(z\otimes w^{(3)}\otimes u^{(3)}))}\\
     &=& T(T(x\otimes w^{(1)}\otimes u^{(1)})\otimes T(y\otimes w^{(2)}\otimes u^{(2)})\otimes T(z\otimes w^{(3)}\otimes u^{(3)}))\\
     &&+ \hbar \psi(T(x\otimes w^{(1)}\otimes u^{(1)})\otimes T(y\otimes w^{(2)}\otimes u^{(2)})\otimes T(z\otimes w^{(3)}\otimes u^{(3)}))\\
     && + \hbar T(\psi(x\otimes w^{(1)}\otimes u^{(1)})\otimes T(y\otimes w^{(2)}\otimes u^{(2)})\otimes T(z\otimes w^{(3)}\otimes u^{(3)}))\\
     && + \hbar T(T(x\otimes w^{(1)}\otimes u^{(1)})\otimes \psi(y\otimes w^{(2)}\otimes u^{(2)})\otimes T(z\otimes w^{(3)}\otimes u^{(3)}))\\
     && + T(T(x\otimes w^{(1)}\otimes u^{(1)})\otimes T(y\otimes w^{(2)}\otimes u^{(2)})\otimes \psi(z\otimes w^{(3)}\otimes u^{(3)}))\\
     && + \hbar^2(\cdots) + \hbar^3(\cdots), 
    \end{eqnarray*}
    where the terms in brackets correspond to terms where $\psi$ appears twice and thrice. Since higher powers of $\hbar$ are zero, equating terms of degree one we find that $T'$ satisfies the TSD condition if and only if $\psi$ satisfies the $2$-cocycle condition. A similar direct computation also shows that $T'$ is a coalgebra morphism if and only if $\psi$ satisfies the condition $\Delta_1 \psi = (\psi\otimes T^{\otimes 2}+T\otimes \psi \otimes T + T^{\otimes 2}\otimes \psi)\shuffle \Delta_3^{\otimes 3}$.
    
      To complete the proof, we need to show that a deformation is equivalent to the trivial deformation if and only if the corresponding $2$-cocycle $\psi$ is cobounded by some coderivation $f$. In fact, suppose that $T' = T + \hbar \psi$ is equivalent to the trivial deformation. Then, by definition, there exists a $\mathbb k[[\hbar]]/(\hbar^2)$-linear map $g$ of the form $g = \mathbb 1 + \hbar f$ such that $g$ is a coalgebra morphism and a TSD morphism. The latter condition enforces that $g(T+\hbar \psi)(g^{-1}(x)\otimes g^{-1}(y)\otimes g^{-1}(z)) = T(x\otimes y\otimes z)$. Expanding $g^{-1}$ as a geometric series in $\hbar$ and grouping terms of order one, we see that $\psi = \delta^1 f$. Imposing that $g$ is a coalgebra morphism and, again, taking terms of degree $1$ shows that $f$ is a coderivation. It follows that $[\psi] = 0$ in $H^2_{\rm TSD}(X;X)$. A similar reasoning also shows that coboundaries give rise to trivial deformations, completing the proof.
\end{proof}

	\section{From Lie algebra cohomology to SD cohomology}\label{sec:Lie_SD}
	
	In this section we construct homomorphisms between the second cohomology groups of a Lie algebra and the corresponding SD structures. Observe here that we consider maps from binary and ternary Lie algebras to both binary and ternary SD structures, along with maps from ternary Lie algebras to ternary SD structures. In fact, as seen in \cite{ESZ}, composing $n$-SD structures produces $(2n-1)$-SD structures, therefore from binary Lie algebras one gets $3$-SD structures as well. The results of this section can be generalized to maps (in cohomology) between $n$-Lie algebras and $n$-SD as well as $(2n-1)$-SD structures. In fact, it is reasonable to assume that the same procedure can be iterated deriving maps to higher order SD structures, even though we have not explicitly verified that this is the case. This is a relevant question which we leave to future work.
	
	\subsection{Binary Lie algebras} 
	
	Let $\frak g$ be a Lie algebra, and let $X = \mathbb C\oplus \frak g$ denote the associated BSD structure. Then, we define a map $\Psi^2 : C^2_{\rm Lie}(\frak g; \frak g) \longrightarrow C^2_{\rm BSD}(X;X)$ as follows. On $2$-chains $(a,x)\otimes (b,y)$ we set $\Psi^2(\phi)(a,x)\otimes (b,y) = (0,\phi(x,y))$, where $\phi$ denotes a Lie algebra $2$-cocycle with coefficients in $\frak g$. 
	
	\begin{proposition}\label{pro:LietoBSD}
		Let $\frak g$, $X$ and $\Psi^2$ be as above. Then $\Psi^2$ induces a well defined map between cohomology groups $H^2_{\rm Lie}(\frak g; \frak g)$ and $H^2_{\rm TSD}(X;X)$. 
	\end{proposition}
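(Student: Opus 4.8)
The plan is to read the target $H^2_{\rm TSD}(X;X)$ with respect to the ternary SD operation that $(X,T,\Delta)$ carries by composition: as recalled in the introduction and in \cite{ESZ}, the binary SD operation $T\colon X^{\otimes 2}\to X$ produces a ternary one $T_3:=T\circ(T\otimes\mathbb{1})$, and this construction --- like the passage from a Lie algebra to its SD object in \cite{AZ,CCES} --- is valid over an arbitrary commutative ground ring. Accordingly I first promote $\Psi^2(\phi)$ to a ternary cochain by the Leibniz rule governing $T\mapsto T_3$, setting
\[
\psi := \Psi^2(\phi)\circ(T\otimes\mathbb{1}) + T\circ(\Psi^2(\phi)\otimes\mathbb{1})\colon X^{\otimes 3}\longrightarrow X,
\]
which on simple tensors evaluates to $\psi\big((a,x)\otimes(b,y)\otimes(c,z)\big) = \big(0,\ b\phi(x,z) + \phi([x,y],z) + c\phi(x,y) + [\phi(x,y),z]\big)$. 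This $\psi$ is precisely the first-order term obtained by pushing the deformation $T+\hbar\Psi^2(\phi)$ of $T$ through the composition $T_3=T(T\otimes\mathbb{1})$.

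The first main step is to prove $\psi\in Z^2_{\rm TSD}(X;X)$. I would argue this deformation-theoretically, bypassing the five-variable identity: by Theorem~\ref{thm:Liedeform}, $\phi$ is a Lie $2$-cocycle if and only if $[\bullet,\bullet]+\hbar\phi$ is a deformation of the bracket over $R:=\mathbb k[[\hbar]]/(\hbar^2)$, i.e.\ if and only if $\mathfrak g_\hbar$ is an $R$-Lie algebra. Since both the assignment $\mathfrak g\mapsto(X,T)$ and the composition $(X,T)\mapsto(X,T_3)$ are defined over any commutative ring, applying them over $R$ turns $\mathfrak g_\hbar$ into a ternary SD object $(X_\hbar, T_3+\hbar\psi)$ over $R$, manifestly an infinitesimal deformation of $(X,T_3)$. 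Theorem~\ref{thm:TSDdeform} then yields $\psi\in Z^2_{\rm TSD}(X;X)$. In parallel I would record the direct verification --- checking $\delta^2\psi=0$ together with the coalgebra compatibility $\Delta_3\psi=(\psi\otimes T^{\otimes 2}+T\otimes\psi\otimes T+T^{\otimes 2}\otimes\psi)\shuffle\Delta_3^{\otimes 3}$ from the Lie cocycle identity for $\phi$ and the explicit forms of $T$ and $\Delta$ --- but, following the paper's practice for such computations, defer it to the Appendix.

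The second step is well-definedness on cohomology, i.e.\ that a Lie coboundary maps to a TSD coboundary. If $\phi=\delta^1 g$ for some $g\colon\mathfrak g\to\mathfrak g$, then the $R$-Lie algebra $\mathfrak g_\hbar$ is isomorphic to the trivial extension $\mathfrak g\otimes_{\mathbb k}R$ by an isomorphism fixing the zero-degree bracket; pushing this isomorphism through the same two constructions exhibits $(X_\hbar, T_3+\hbar\psi)$ as equivalent to the trivial deformation $(X\otimes_{\mathbb k}R, T_3)$, fixing the coalgebra and zero-degree operation. By the coboundary half of the proof of Theorem~\ref{thm:TSDdeform}, $\psi\in B^2_{\rm TSD}(X;X)$. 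This can also be seen at chain level: taking $f:=\iota_2\circ g\circ\pi_2$, the coderivation attached to $g$ by Lemma~\ref{lem:coderivation}, a short computation shows $\delta^1 f=\psi$, with the $b$- and $c$-homogeneous parts reproducing $\phi(x,z)$ and $\phi(x,y)$ via $\delta^1 g$, and the remaining part reproducing $\phi([x,y],z)+[\phi(x,y),z]$. Linearity of $\phi\mapsto\psi$ then makes $[\phi]\mapsto[\psi]$ an additive map $H^2_{\rm Lie}(\mathfrak g;\mathfrak g)\to H^2_{\rm TSD}(X;X)$.

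The step I expect to be the main obstacle is matching the two deformation-cohomology dictionaries carefully enough that the map induced on $H^2$ by the composite construction coincides with the one induced by the explicit chain map $\phi\mapsto\psi$ --- in particular, ensuring that equivalences of Lie deformations are carried to equivalences of ternary SD deformations fixing exactly the zero-degree data demanded by Theorem~\ref{thm:TSDdeform}. The explicit TSD cocycle computation for $\psi$, though routine, is the other laborious point and is what I would relegate to the Appendix.
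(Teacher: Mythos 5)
Your argument is internally sound, but it proves the wrong proposition. The ``TSD'' in the statement of Proposition~\ref{pro:LietoBSD} is a typo for ``BSD'': $\Psi^2$ is defined as a map into $C^2_{\rm BSD}(X;X)$, the label itself reads ``Lie to BSD'', and Theorem~\ref{thm:LieBSD}, which consumes this proposition, explicitly refers to ``the map $H^2_{\rm Lie}(\mathfrak g;\mathfrak g) \longrightarrow H^2_{\rm BSD}(X;X)$ induced by $\Psi^2$ in cohomology (from Proposition~\ref{pro:LietoBSD})''. So the assertion is that the \emph{binary} cochain $\Psi^2(\phi)((a,x)\otimes(b,y))=(0,\phi(x,y))$ itself represents a well-defined class in the binary SD cohomology of $X$. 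By promoting $\Psi^2(\phi)$ via the Leibniz rule along $T_3=T\circ(T\otimes\mathbb 1)$ you instead constructed exactly the map $\Theta^2$ of Proposition~\ref{prop:2LietoTSD} --- your $\psi$ coincides term-for-term with $\Theta^2(\phi)$ --- so what you wrote is a correct proof of that later proposition (indeed, methodologically identical to the paper's own proof of it, which pushes the deformed bracket through the construction, identifies the first-order term, and invokes Theorem~\ref{thm:TSDdeform}; the paper gives no separate proof of Proposition~\ref{pro:LietoBSD} at all). The distinction is not cosmetic: Lemma~\ref{lem:special} and the injectivity/surjectivity arguments of Theorem~\ref{thm:LieBSD} all operate on the binary cochain $\Psi^2(\phi)$, not on its ternary promotion, so your statement cannot be substituted for the intended one downstream.

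The repair is immediate, because your deformation-theoretic strategy transfers verbatim and simplifies once the Leibniz promotion is dropped. Over $R=\mathbb k[[\hbar]]/(\hbar^2)$, the SD object of the deformed bracket $[\bullet,\bullet]+\hbar\phi$ has binary operation $q_\phi((a,x)\otimes(b,y))=(ab,\,bx+[x,y]+\hbar\phi(x,y))=q+\hbar\,\Psi^2(\phi)$, so the binary case of Theorem~\ref{thm:TSDdeform} (the paper notes that section's results hold for all arities) gives $\Psi^2(\phi)\in Z^2_{\rm BSD}(X;X)$, including the coalgebra-compatibility condition defining $C^2_{\rm BSD}$. For coboundaries, taking $f:=\iota_2\circ g\circ\pi_2$ (a coderivation by Lemma~\ref{lem:coderivation}) one computes
\begin{eqnarray*}
\delta^1_{\rm BSD}f((a,x)\otimes(b,y)) &=& f(q((a,x)\otimes(b,y))) - q(f(a,x)\otimes(b,y)) - q((a,x)\otimes f(b,y))\\
&=& \bigl(0,\ g([x,y])-[g(x),y]-[x,g(y)]\bigr) \ =\ \Psi^2(\delta^1_{\rm Lie}g)((a,x)\otimes(b,y)),
\end{eqnarray*}
the $bg(x)$ terms cancelling, so no sign correction is needed and $\Psi^2$ is a chain map on the nose. (Incidentally, your ternary chain-level check is also correct as stated: with the paper's conventions $\delta^1_{\rm TSD}(\iota_2 g\pi_2)=\Theta^2(\delta^1_{\rm Lie}g)$ holds exactly, which is cleaner than the ``overall negative sign'' the paper inserts in the proof of Proposition~\ref{prop:2LietoTSD}.) In short: keep your argument but apply it to $q$ rather than to $T_3$; as written, your proposal establishes Proposition~\ref{prop:2LietoTSD}, not Proposition~\ref{pro:LietoBSD}.
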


	In what follows we use the notation $\phi((a,x)\otimes (b,y)) = (\phi_0((a,x)\otimes (b,y)),  \phi_1((a,x)\otimes (b,y)))$, for $2$-cochains, and when no confusion arises, we simply write $a + x$ instead of $(a,x)$ to indicate the direct sum $\mathbb k\oplus \mathfrak g$. So, $\phi_1((a,x)\otimes (b,y))$ will denote $(0,\phi_1((a,x)\otimes (b,y)))$, for instance. 

	\begin{definition}
		{\rm 
		Let $X$ denote a BSD structure corresponding to a Lie algebra $\frak g$. Let $\phi\in C^2_{\rm BSD} (X;X)$.
		 Then, if $\hat\psi(a,x)\otimes (b,y) = (0,\psi(x,y))$ for all $(a,x)$ and $(b,y)$, we say that the cohomology class $[\hat\psi]$ is {\it special}, and we simply denote it by the map $\psi$.
		}
	\end{definition}
		
	The following lemma will be key in proving the main result of the section. 
	\begin{lemma}\label{lem:special}
		Let $\mathfrak g$ be a Lie algebra and $X$ the associated BSD object. Then, any $2$-cochain $\phi \in C^2(X;X)$ is such that $\phi((a,x)\otimes (b,y)) = \phi_1((a,x)\otimes (b,y))$,  and moreover $\phi_1(\mathbb k\otimes X) = 0$. If in addition $\mathfrak g$ has trivial center, any $2$-cocycle $\phi \in Z^2_{\rm BSD}(X;X)$ is special. 
	\end{lemma}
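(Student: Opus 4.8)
The plan is to first pin down the shape of an arbitrary BSD $2$-cochain using only the coalgebra-morphism condition defining $C^2_{\rm BSD}(X;X)$, and then to feed the additional cocycle condition, together with triviality of the center, into two well-chosen evaluations of $\delta^2\phi$. Throughout I write $\mathbf 1 = (1,0)$ for the grouplike generator, observe that $\mathfrak g = \{(0,x)\}$ is exactly the space of primitives, and record the identities $T(w\otimes\mathbf 1) = w$, $T(\mathbf 1\otimes(b,y)) = (b,0) = \epsilon(b,y)\,\mathbf 1$, $T(\mathbf 1\otimes(0,z)) = 0$ and $\epsilon\circ T = \epsilon\otimes\epsilon$, which I will use repeatedly. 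Decompose $\phi = (\phi_0,\phi_1)$ into its $\mathbb k$- and $\mathfrak g$-valued parts.

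For the vanishing of $\phi_0$ I would apply $\epsilon\otimes\epsilon$ to the defining identity $\Delta\phi = (\phi\otimes T + T\otimes\phi)\shuffle\Delta^{\otimes 2}$. Using $(\epsilon\otimes\epsilon)\Delta = \epsilon$ on the left, and $\epsilon\circ T = \epsilon\otimes\epsilon$, $\epsilon\circ\phi = \phi_0$ together with the counit axiom on the right, the identity collapses to $\phi_0 = 2\phi_0$, whence $\phi_0 = 0$; this is the first assertion. For the second assertion I would expand the same defining identity on a simple tensor after splitting $X\otimes X = (\mathbb k\oplus\mathfrak g)^{\otimes 2}$ into its four homogeneous summands. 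The left-hand side $\Delta\phi$ has no $\mathfrak g\otimes\mathfrak g$-component, so the $\mathfrak g\otimes\mathfrak g$-part of the right-hand side must vanish; writing $\phi(\mathbf 1\otimes\mathbf 1) = (0,c_1)$ and $\phi(\mathbf 1\otimes(0,y)) = (0,D_1(y))$, the coefficient of $b$ yields $c_1\otimes x + x\otimes c_1 = 0$ and the remaining term then yields $D_1(y)\otimes x + x\otimes D_1(y) = 0$ for all $x$. Since over $\mathbb C$ the relation $u\otimes x + x\otimes u = 0$ for all $x$ forces $u = 0$, we get $c_1 = 0$ and $D_1 = 0$, i.e. $\phi_1(\mathbb k\otimes X) = 0$. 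With these two facts, bilinearity and $(a,x)\otimes(b,y) = ab\,\mathbf 1\otimes\mathbf 1 + a\,\mathbf 1\otimes(0,y) + b\,(0,x)\otimes\mathbf 1 + (0,x)\otimes(0,y)$ give the normal form $\phi((a,x)\otimes(b,y)) = (0,\,bE_1(x) + F_1(x,y))$, where $E_1(x) = \phi_1((0,x)\otimes\mathbf 1)$ and $F_1(x,y) = \phi_1((0,x)\otimes(0,y))$.

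It remains to show $E_1 = 0$ when $\phi$ is a cocycle and $\mathfrak g$ has trivial center, since this makes the normal form independent of $a,b$, i.e. special. I would substitute the grouplike into $\delta^2\phi = 0$ in two different slots. Placing $\mathbf 1$ in the third slot and using $T(w\otimes\mathbf 1) = w$, two terms cancel and the identity collapses to $\phi(T(x\otimes y)\otimes\mathbf 1) = T(\phi(x\otimes\mathbf 1)\otimes y) + T(x\otimes\phi(y\otimes\mathbf 1))$; evaluated on primitives $(0,x),(0,y)\in\mathfrak g$ this reads $E_1([x,y]) = [E_1 x, y] + [x, E_1 y]$, so $E_1$ is a derivation. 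Placing $\mathbf 1$ instead in the \emph{middle} slot and using in addition $T(\mathbf 1\otimes(0,z)) = 0$ and the already-established $\phi_1(\mathbb k\otimes X) = 0$, every surviving occurrence of $\phi$ with a primitive second slot drops out and the identity collapses on primitives $(0,x),(0,z)$ to the one-sided relation $E_1([x,z]) = [E_1 x, z]$. Subtracting the two relations gives $[x, E_1 z] = 0$ for all $x,z\in\mathfrak g$, i.e. $E_1(\mathfrak g)\subseteq Z(\mathfrak g)$. As the center is trivial, $E_1 = 0$, and $\phi((a,x)\otimes(b,y)) = (0, F_1(x,y))$ is special, as claimed.

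The componentwise expansion establishing the normal form is routine; the only point to watch is the precise value of $T$ on mixed grouplike/primitive tensors. The conceptual crux is the last step: the single substitution $z = \mathbf 1$ only produces the Leibniz/derivation relation and cannot by itself kill $E_1$ (for instance every inner derivation would survive it), so the essential move is to also place the grouplike in the middle slot to extract the \emph{one-sided} relation $E_1([x,z]) = [E_1 x, z]$. It is precisely the discrepancy between the two-sided and one-sided relations that confines $\operatorname{im} E_1$ to the center, which is exactly where the triviality-of-center hypothesis is consumed.
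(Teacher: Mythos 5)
Your proof is correct and follows essentially the same route as the paper: both arguments first extract the constraints $\phi_0=0$ and $\phi(\mathbb k\otimes X)=0$ from the coalgebra-morphism (cochain) condition, and then evaluate the cocycle condition on $(0,x)\otimes(1,0)\otimes(0,z)$ and on $(0,x)\otimes(0,y)\otimes(1,0)$, subtracting the resulting one-sided and derivation relations to trap $\phi_1(\mathfrak g\otimes\mathbb k)$ in the center. Your counit trick $(\epsilon\otimes\epsilon)\Delta\phi=\phi_0=2\phi_0$ and the observation that $\Delta$ has no $\mathfrak g\otimes\mathfrak g$-component streamline the first half compared with the paper's term-by-term matching, but the substance is identical.
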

	\begin{proof}
	Let $\phi$ denote a $2$-cochain representing a cohomology class $[\phi]\in H^2_{\rm BSD}(X;X)$. Then, by definition of $C^2_{\rm BSD}(X;X)$ we have that $\phi$ satisfies the equation
	$$
	\Delta \phi ((a,x)\otimes (b,y)) = [\phi\otimes q + q\otimes \phi](\mathbb 1\otimes \tau \otimes \mathbb 1)(\Delta\otimes \Delta)((a,x)\otimes (b,y)),
	$$ where $q:X \times X \rightarrow X$ is given by $q((a,x)\otimes (b,y))=(ab,bx+[x,y])$.
	Using the definitions, the $2$-cochain condition for $\phi$ becomes
	\begin{eqnarray*}
	\lefteqn{\phi((a,x)\otimes (b,y))\otimes (1,0) + (1,0)\otimes \phi_1((a,x)\otimes (b,y))}\\
	 &=& \phi((a,x)\otimes (b,y))\otimes (1,0) + \phi((a,x)\otimes (1,0))\otimes q((1,0)\otimes (0,y))\\
	 && + \phi((1,0)\otimes (b,y))\otimes (0,x) + \phi((1,0)\otimes (1,0))\otimes (0,[x,y])\\
	 && + (ab,bx+[x,y])\otimes \phi((1,0)\otimes (1,0)) + (a,x)\otimes \phi((1,0)\otimes (0,y))\\
	 && + b(1,0)\otimes \phi((0,x)\otimes (1,0)) + (1,0)\otimes \phi((0,x)\otimes (0,y)),
	\end{eqnarray*}
	and this is readily rewritten as 
	\begin{eqnarray*}
		\lefteqn{(1,0)\otimes \phi_1((a,x)\otimes (b,y))}\\
		&=& + \phi((1,0)\otimes (b,y))\otimes (0,x) + \phi((1,0)\otimes (1,0))\otimes (0,[x,y])\\
		&& + (ab,bx+[x,y])\otimes \phi((1,0)\otimes (1,0)) + (a,x)\otimes \phi((1,0)\otimes (0,y))\\
		&& + b(1,0)\otimes \phi((0,x)\otimes (1,0)) + (1,0)\otimes \phi((0,x)\otimes (0,y)).
	\end{eqnarray*}
	Let us take $y = 0$ in the previous equation. Then, the RHS has a component of type $(0,x)\otimes (c,z)$, coming from $(ab,bx+[x,y])\otimes \phi((1,0)\otimes (1,0))$, which is not present on the LHS. This forces $\phi((1,0)\otimes (1,0)) = 0$. The $2$-cochain condition, therefore, becomes
	\begin{eqnarray*}
		(1,0)\otimes \phi_1((a,x)\otimes (b,y)) &=& \phi((1,0)\otimes (0,y))\otimes (0,x) + (a,x)\otimes \phi((1,0)\otimes (0,y))\\
		&& + b(1,0)\otimes \phi((0,x)\otimes (1,0)) + (1,0)\otimes \phi((0,x)\otimes (0,y)),
	\end{eqnarray*}
	where we have also simplified the term $ \phi((1,0)\otimes (b,y))\otimes (0,x)$ to $ \phi((1,0)\otimes (0,y))\otimes (0,x) $ by virtue of the vanishing of $\phi((1,0)\otimes (1,0))$. But then this forces $ \phi((1,0)\otimes (0,y))$ to be trivial, since otherwise $(a,x)\otimes \phi((1,0)\otimes (0,y))$ would have a component of type $(a,x)\otimes (c,z)$ which has no counterpart in the LHS. We have obtained
		\begin{eqnarray*}
		(1,0)\otimes \phi_1((a,x)\otimes (b,y)) &=& b(1,0)\otimes \phi((0,x)\otimes (1,0)) + (1,0)\otimes \phi((0,x)\otimes (0,y)).
	\end{eqnarray*}
	Now, let us consider the case $y = 0$. From this we obtain that 
	$$
	b(1,0)\otimes (0,\phi_1((0,x)\otimes(1,0))) = b(1,0)\otimes \phi((0,x)\otimes(1,0)),
	$$
	which shows that $\phi_0(X\otimes \mathbb k) = 0$. Similarly, setting $b=0$ gives that $\phi((0,x)\otimes (0,y)) = \phi_1((0,x)\otimes (0,y))$, showing that $\phi_0 = 0$ on $\mathfrak g\otimes \mathfrak g$ as well. This completes the first part of the statement. 
	
	Let us now suppose that $\mathfrak g$ has trivial center, and that $\phi$ is a $2$-cocycle. Applying the $2$-cocycle condition on simple tensors of type $(0,x)\otimes (1,0)\otimes (0,z)$ we find
	\begin{eqnarray*}
	\lefteqn{\phi(q((0,x)\otimes (1,0))\otimes (0,z))  + q(\phi((0,x)\otimes (1,0))\otimes (0,z))}\\
	&=& q(\phi((0,x)\otimes (0,z))\otimes(1,0)) + q( \phi((0,x)\otimes (1,0))\otimes q((1,0)\otimes (0,z)))\\
	&&+ q(q((0,x)\otimes (0,z))\otimes \phi((1,0)\otimes (1,0))) + q((0,x)\otimes \phi((1,0)\otimes (1,0))) \\
	&& + \phi(q((0,x)\otimes (0,z))\otimes (1,0)) +  \phi((0,x)\otimes q((1,0)\otimes (0,z)))\\
	&=& \phi((0,x)\otimes (0,z)) + \phi((0,[x,z])\otimes (1,0)),
	\end{eqnarray*}
	where we have used the conditions on $\phi$ proved in the first part of the proof, and the definition of $q$, in particular the fact that $q((1,0)\otimes (0,x)) = 0 $ for all $x\in \mathfrak g$. The previous equality then is seen to reduce to
	$$
	[\phi_1((0,x)\otimes (1,0)),z] = \phi_1((0,[x,z])\otimes (1,0)),
	$$
	which holds for arbitrary $x, z \in \mathfrak g$. We proceed similarly now on simple tensors of type $(0,x)\otimes (0,y)\otimes (1,0)$, for which the $2$-cocycle condition takes the form
	\begin{eqnarray*}
	\phi(q((0,x)\otimes (0,x))\otimes (1,0)) + q(\phi((0,x)\otimes (0,y))\otimes (1,0)) &=& 
	q(\phi((0,x)\otimes (1,0))\otimes (0,y)) \\
	&&+ q((0,x)\otimes \phi((0,y)\otimes (1,0))\\
	&& + \phi((0,x)\otimes (0,y)).
	\end{eqnarray*}
   Applying the condition obtained from the previous case, this equation is seen to reduce to $0 = (0, [x,\phi_1((0,y)\otimes (1,0))])$, which holds for all $x,y\in \mathfrak g$. Therefore, $\phi_1$ maps $\mathfrak g\otimes \mathbb k$ in the center of $\mathfrak g$, which is trivial by hypothesis. Consequently, $\phi_1(\mathfrak g\otimes \mathbb k) = 0$, and $\phi$ is special, completing the proof.  
	\end{proof}

	\begin{theorem}\label{thm:LieBSD}	
		Let $\frak g$ be a Lie algebra and let $X$ be its associated BSD object. Then $\Psi^2$ induces a monomorphism $H^2_{\rm Lie}(\frak g;\frak g) \longrightarrow H^2_{\rm BSD}(X;X)$ in cohomology.
		Suppose, in addition, that $\mathfrak g$ has trivial center (e.g. it is semisimple), then $H^2_{\rm Lie}(\frak g;\frak g) \cong H^2_{\rm BSD}(X;X)$. 
	\end{theorem}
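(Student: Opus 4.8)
The plan is to treat the two assertions separately, using that $\Psi^2$ already descends to cohomology by Proposition~\ref{pro:LietoBSD}. For the first assertion I would prove \textbf{injectivity} directly, and for the second I would then only need \textbf{surjectivity}, the injectivity being already in hand. Throughout, the two structural inputs are Lemma~\ref{lem:coderivation} (which pins down the $1$-cochains as maps $f(a,x)=(0,g(x))$) and Lemma~\ref{lem:special} (which, under the trivial-center hypothesis, forces every $2$-cocycle to be special).

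For injectivity the idea is to show that $\Psi^2$ intertwines the two differentials, so that a Lie class mapping to a BSD coboundary was already a Lie coboundary. Concretely, suppose $\Psi^2(\phi)=\delta^1_{\rm SD} f$ for some BSD coderivation $f$. By Lemma~\ref{lem:coderivation}, $f(a,x)=(0,g(x))$ for a linear map $g\colon \mathfrak g\to\mathfrak g$. Evaluating $\delta^1_{\rm SD} f$ on a simple tensor $(0,x)\otimes(0,y)$ and using the explicit operation $T((a,x)\otimes(b,y))=(ab,\,bx+[x,y])$, every $\mathbb k$-component drops out and one is left with
\[
\delta^1_{\rm SD} f\bigl((0,x)\otimes(0,y)\bigr)=\bigl(0,\ g([x,y])-[g(x),y]-[x,g(y)]\bigr)=\bigl(0,\delta^1_{\rm Lie} g(x,y)\bigr).
\]
Since $\Psi^2(\phi)\bigl((0,x)\otimes(0,y)\bigr)=(0,\phi(x,y))$, comparing $\mathfrak g$-components yields $\phi=\delta^1_{\rm Lie} g$, i.e. $[\phi]=0$ in $H^2_{\rm Lie}(\mathfrak g;\mathfrak g)$. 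This establishes the monomorphism, and it holds for an arbitrary Lie algebra $\mathfrak g$.

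For the isomorphism under trivial center it remains to prove surjectivity. Here the key is Lemma~\ref{lem:special}: every class in $H^2_{\rm BSD}(X;X)$ is represented by a special cocycle $\phi$, namely $\phi((a,x)\otimes(b,y))=(0,\psi(x,y))$ for a single map $\psi\colon\mathfrak g\otimes\mathfrak g\to\mathfrak g$. At the level of cochains one then has $\phi=\Psi^2(\psi)$, so it suffices to check that $\psi$ is itself a Lie $2$-cocycle, for then $[\phi]=\Psi^2([\psi])$ lies in the image. To verify this I would restrict the BSD $2$-cocycle condition $\delta^2_{\rm SD}\phi=0$ to the pure tensor $(0,x)\otimes(0,y)\otimes(0,z)$. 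Writing $\Delta(0,z)=(0,z)\otimes(1,0)+(1,0)\otimes(0,z)$ and expanding the five terms of $\delta^2_{\rm SD}$, each term splits into two Sweedler summands; the summands in which $\psi$ receives a $\mathbb k$-slot vanish by specialness, and the surviving summands collapse, via the explicit $T$, to iterated brackets and values of $\psi$ on $\mathfrak g$. The claim is that the resulting identity is precisely the Lie cocycle condition $[\psi(x,y),z]+[\psi(y,z),x]+[\psi(z,x),y]+\psi([x,y],z)+\psi([y,z],x)+\psi([z,x],y)=0$.

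The main obstacle is exactly this last reduction. The SD condition produces a five-term expression in which the comultiplied ``acting'' variable $z$ is distributed across the two inner copies of $T$, so one must carefully track which Sweedler summands survive and then reorganize the surviving brackets into the cyclic Lie form above. The matching of the two expressions only goes through after using the antisymmetry of the bracket to rewrite terms such as $-[x,\psi(y,z)]$ as $[\psi(y,z),x]$ and to fold the $\psi([x,z],y)$-type terms into $\psi([z,x],y)$; this is where care is needed, since it is precisely the symmetric part of $\psi$ that could in principle spoil the identification. The trivial-center hypothesis is essential here: it is what Lemma~\ref{lem:special} uses to guarantee that $\phi$ carries no component landing in, or depending on, the $\mathbb k$-summand, so that no extra symmetric terms survive and $\psi$ is genuinely a Lie cocycle. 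Once this computation is carried out, injectivity and surjectivity together give $H^2_{\rm Lie}(\mathfrak g;\mathfrak g)\cong H^2_{\rm BSD}(X;X)$.
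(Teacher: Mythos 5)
Your proposal follows essentially the same route as the paper's proof: injectivity via Lemma~\ref{lem:coderivation} together with the direct computation showing that $\delta^1_{\rm SD}f$ restricted to $\mathfrak g\otimes\mathfrak g$ reduces to $\delta^1_{\rm Lie}g$, and surjectivity via the specialness of $2$-cocycles guaranteed by Lemma~\ref{lem:special} under the trivial-center hypothesis. The one point where you go beyond the paper is the explicit check that the map $\psi$ extracted from a special cocycle satisfies the Lie $2$-cocycle condition (the paper simply asserts $\Psi^2(\sigma)=\psi$ without this verification), and your outline of that reduction, including the caveat about tracking the Sweedler summands and the symmetric part of $\psi$, is the correct way to fill it in.
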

	\begin{proof}

	Let us indicate the map $H^2_{\rm Lie}(\frak g;\frak g) \longrightarrow H^2_{\rm BSD}(X;X)$ induced by $\Psi^2$ in cohomology (from Proposition~\ref{pro:LietoBSD}) by the same symbol $\Psi^2$. We start by showing that $\Psi^2$ is injective. Let $\phi$ be a representative of a cohomology class in $H^2_{\rm Lie}(\frak g;\frak g)$ and suppose that $[\Psi^2(\phi)] = [0]$.  Then, there exists a $1$-cochain $f: X\longrightarrow X$ such that $\Psi^2(\phi)= \delta^1f$. Observe that, by definition of $\Psi^2$,  $\Psi^2(\phi)$ is special. Therefore, it follows that $\delta_{SD}^1$ is special as well, since it would not otherwise cobound $\Psi^2(\phi)$. Consequently, we can write $\delta^1 f(a,x)\otimes (b,y) = (0, g(a,x)\otimes (b,y))$. A direct computation shows that $g(a,x)\otimes (b,y) = \delta^1_{\rm Lie} f_1x\otimes y$. Then, this means that $\phi = \delta^1_{\rm Lie} f_1$, which in turn implies that $[\phi] = [0]$ in the Lie cohomology group. This shows that $\psi^2$ is injective.
	
	 Let us now consider the second part of the statement. Surjectivity is derived from the fact that cohomology classes of $X$ are special, i.e. Lemma~\ref{lem:special} where $\mathfrak g$ has trivial center. Let $[\psi]$ be a cohomology class in $H^2_{\rm BSD}(X;X)$. Then, the fact that $\psi$ is special implies that $\psi((a,x)\otimes (b,y)) = \psi_1((0,x)\otimes (0,y))$. Then, we can define $\sigma : \mathfrak g^{\otimes 2} \longrightarrow \mathfrak g$ by the assignment $g(x\otimes y) : = \psi_1((0,x)\otimes (0,y))$. It follows that $\Psi^2(\sigma) = \psi$, showing that $\Psi^2$ is surjective.  
	\end{proof}

\begin{remark}
	{\rm 
We observe that Theorem~\ref{thm:LieBSD} has a deep meaning in terms of deformation theory. It substantially states, in fact, that the deformation theory of SD structures coincides with the deformation theory of the Lie algebra that generates the SD object. 
}
\end{remark}
	
	We consider now the case of binary Lie algebras and TSD structure associated to it via composition of binary self-disrtibutive operations. Let $\mathfrak g$ denote a (binary) Lie algebra, and let $X$ denote the corresponding TSD structure arising from  $\mathfrak g$. Then, for a $2$-cocycle $\phi \in Z^2_{\rm Lie}(\mathfrak g, \mathfrak g)$, we define a map $\Theta(\phi) : X^{\otimes 3} \longrightarrow X$ according to the assignment
	$$
	\Theta^2(\phi)((a,x)\otimes (b,y)\otimes (c,z)) = (0, b\phi(x\otimes z) +c\phi(x\otimes y) + [\phi(x,y),z] + \phi([x,y],z)). 
    $$
	
	\begin{proposition}\label{prop:2LietoTSD}
		Let $\mathfrak g$, $X$ and $\Theta^2$ be as above. Then $\Theta^2$ induces a map between cohomology groups $H^2_{\rm Lie}(\mathfrak g; \mathfrak g)$ and $H^2_{\rm TSD}(X;X)$. 
	\end{proposition}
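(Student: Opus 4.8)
The plan is to show that $\Theta^2$ sends Lie $2$-cocycles to SD $2$-cocycles, and that it descends to cohomology by sending Lie coboundaries into SD coboundaries. The proof will therefore split into two verifications: first that $\Theta^2(\phi) \in Z^2_{\rm TSD}(X;X)$ whenever $\phi \in Z^2_{\rm Lie}(\mathfrak g;\mathfrak g)$, and second that $\Theta^2(\delta^1_{\rm Lie} f) = \delta^1_{\rm SD}(\widehat{f})$ for a suitable $1$-cochain $\widehat{f}$ built from $f$. Both steps are direct computations, so I will set them up carefully rather than carry them out in full.

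First I would confirm that $\Theta^2(\phi)$ is a genuine $2$-cochain, i.e. that it satisfies the coalgebra-morphism condition defining $C^2_{\rm TSD}(X;X)$. Here I would exploit the structure of the formula: the term $[\phi(x,y),z] + \phi([x,y],z)$ is exactly the ``primitive'' part living in $\mathfrak g$, built from the Lie bracket and $\phi$, while the terms $b\phi(x\otimes z) + c\phi(x\otimes y)$ record the interaction with the scalar (grouplike) components coming from $\Delta$. The key observation is that $\Theta^2(\phi)$ has image in $\mathfrak g$ and is designed to mimic the linearization of the ternary SD operation $T$ obtained by composing the binary operation $q$ with itself; this is why the coalgebra condition reduces to checking compatibility of $\phi$ with the comultiplication $\Delta(a,x) = (a,x)\otimes(1,0) + (1,0)\otimes(0,x)$, which is straightforward given that $\phi_1$ lands in $\mathfrak g$ and $\mathfrak g$ consists of primitive elements.

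Next, and this is where the main effort lies, I would verify the $2$-cocycle condition $\delta^2_{\rm SD}\Theta^2(\phi) = 0$ on a simple tensor $(a_1,x_1)\otimes\cdots\otimes(a_5,x_5)$. The strategy is to expand $\delta^2_{\rm SD}\Theta^2(\phi)$ using the explicit five-term formula for $\delta^2$ together with the ternary operation $T$ arising from $\mathfrak g$ by composition of binary operations, and then to collect terms according to their scalar coefficients. The expected punchline is that the resulting expression organizes itself into (a linear combination of) the Lie $2$-cocycle quantity $\delta^2_{\rm Lie}\phi$ evaluated on appropriate triples of $\mathfrak g$-components, together with terms that cancel identically by the self-distributivity of $T$ and the cocommutativity of $\Delta$. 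Since $\phi$ is assumed to be a Lie $2$-cocycle, $\delta^2_{\rm Lie}\phi = 0$, and the whole expression vanishes. I anticipate the hard part to be the bookkeeping of the Sweedler components $w^{(i)}, u^{(i)}$ against the scalar-versus-$\mathfrak g$ splitting, since each tensor factor contributes both a grouplike and a primitive piece; this is a cumbersome but essentially mechanical computation, of the same flavor as Lemma~\ref{lem:differentials}, and I would expect to defer the bulk of it to the Appendix.

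Finally, to obtain the induced map in cohomology I would show $\Theta^2$ respects coboundaries. Given a Lie $1$-cochain $f: \mathfrak g \longrightarrow \mathfrak g$ with $\delta^1_{\rm Lie}f(x,y) = f([x,y]) - [f(x),y] - [x,f(y)]$, I would exhibit an SD $1$-cochain $\widehat{f}: X\longrightarrow X$, namely the coderivation $\widehat{f}(a,x) = (0,f(x))$ produced by Lemma~\ref{lem:coderivation}, and check that $\Theta^2(\delta^1_{\rm Lie}f) = \delta^1_{\rm SD}\widehat{f}$. This reduces to comparing the defining formula of $\delta^1_{\rm SD}$ (using the ternary $T$) with $\Theta^2$ applied to the Lie coboundary, and I expect the scalar-coefficient terms $b\phi(x\otimes z) + c\phi(x\otimes y)$ in the definition of $\Theta^2$ to be precisely what is needed to match the contributions of $\widehat{f}$ on the mixed scalar-$\mathfrak g$ tensor slots. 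Granting these two compatibilities, $\Theta^2$ carries $Z^2_{\rm Lie}$ into $Z^2_{\rm TSD}$ and $B^2_{\rm Lie}$ into $B^2_{\rm TSD}$, hence descends to a well-defined homomorphism $H^2_{\rm Lie}(\mathfrak g;\mathfrak g) \longrightarrow H^2_{\rm TSD}(X;X)$, as claimed.
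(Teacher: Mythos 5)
Your overall strategy is sound and your coboundary step is essentially the paper's: both define the $1$-cochain $(a,x)\mapsto(0,f(x))$ (the paper actually takes $\Theta^1(f)(a,x)=(0,-f(x))$, since the direct check shows $\delta^1_{\rm TSD}$ of $(0,f(x))$ equals $\Theta^2(\delta^1_{\rm Lie}f)$ only up to an overall sign --- a detail your verification would surface). Where you diverge is the main step. You propose to verify the $2$-cochain and $2$-cocycle conditions for $\Theta^2(\phi)$ by brute-force expansion on simple tensors, tracking Sweedler components and collecting terms until $\delta^2_{\rm Lie}\phi$ appears; you acknowledge this is a long computation and defer it. The paper avoids that computation entirely by a deformation-theoretic detour: since $\phi$ is a Lie $2$-cocycle, Theorem~\ref{thm:Liedeform} makes $[\bullet,\bullet]+\hbar\phi$ an infinitesimal Lie deformation; Theorem~8.1 of \cite{ESZ} then says the TSD structure $T_\phi$ built from the deformed bracket is again a TSD object; direct inspection gives $T_\phi = T+\hbar\,\Theta^2(\phi)$ (this identity is in fact the origin of the otherwise unmotivated formula for $\Theta^2$); and Theorem~\ref{thm:TSDdeform} converts ``$T+\hbar\,\Theta^2(\phi)$ is a TSD deformation'' back into ``$\Theta^2(\phi)$ is a TSD $2$-cochain satisfying the $2$-cocycle condition.'' Your route is more self-contained but leaves the hardest bookkeeping unexecuted; the paper's route buys the cocycle condition for free from results already established, at the cost of invoking the external Theorem~8.1 of \cite{ESZ} and the identification $T_\phi=T+\hbar\,\Theta^2(\phi)$. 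If you complete your expansion (or simply observe the identity $T_\phi=T+\hbar\,\Theta^2(\phi)$ and reuse Theorems~\ref{thm:Liedeform} and \ref{thm:TSDdeform}), your argument closes the same gap.
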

\begin{proof}
 We need to show that $\Theta^2$ maps a $2$-cocycle $\phi$ to a $2$-cochain of $X$, and that it satisfies the $2$-cocycle condition. This means that $\Theta^2(\phi)$ must satisfy  the property $\Delta_3 \Theta^2(\phi) = [\Theta^2(\phi)\otimes T^{\otimes 2}+T\otimes \Theta^2(\phi) \otimes T + T^{\otimes 2}\otimes \Theta^2(\phi)]\shuffle \Delta_3^{\otimes 3}$, and moreover $\delta^2 \Theta^2(\phi)  = 0$, where $\delta^2$ is the differential of Section~\ref{sec:TSDdeform}. Both properties are easily verified as follows. Let $[\bullet,\bullet]_\psi$ denote the Lie alegbra structure obtained as a deformation of the bracket $[\bullet,\bullet]$ of $\mathfrak g$, as an application of Theorem~\ref{thm:Liedeform}. From Theorem~8.1 in \cite{ESZ}, the map $T_\phi: X_\hbar^{\otimes 3} \longrightarrow X_\hbar$ associated to $[\bullet,\bullet]_\phi$ turns $X_\hbar$ into a TSD object. Direct inspection shows that $T_\phi = T + \hbar \Theta^2 (\phi)$. Then, applying Theorem~\ref{thm:TSDdeform} we conclude that $\Theta^2(\phi)$ satisfies the required properties. To complete the proof, we need to show that $\Theta^2(\delta^1 f)$ is a coboundary for any $f: \mathfrak g \longrightarrow \mathfrak g$. In fact, given such a $1$-cochain $f$, we define $g : X\longrightarrow X$ as $g(a,x) = (0,f(x))$. Then, we see that $\delta^1_{\rm TSD} g ((a,x)\otimes (b,y)\otimes (c,z) = (0,bf([x,y]) + cf([x,y]) + f([[x,y],z]) - b[f(x),z] - c[f(x),y] - [[f(x),y],z] - c[x,f(y)] - [[x,f(y)],z] - b[x,f(z)] - [[x,y],f(z)])$. It is a direct verification to see that the previous term coincides with $\Theta^2(\delta^1_{\rm Lie}f)$ evaluated on $(a,x)\otimes (b,y)\otimes (c,z)$, up to an overall negative sign. It follows that setting $\Theta^1(f)(a,x) = (0,-f(x))$ gives a cochain map at level $1$ and $2$ between Lie cohomology and TSD cohomology, and $\Theta^2$ induces a well defined map in cohomology. 
\end{proof}
	
We now show that the morphism of Proposition~\ref{prop:2LietoTSD} is injective.

\begin{proposition}
	The map $\Theta^2$ is an injection.
\end{proposition}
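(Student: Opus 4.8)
The plan is to show injectivity of $\Theta^2$ by tracing through what it means for $\Theta^2(\phi)$ to be a coboundary and pulling that information back to the Lie level. Suppose $\phi \in Z^2_{\rm Lie}(\mathfrak g;\mathfrak g)$ represents a class with $[\Theta^2(\phi)] = [0]$ in $H^2_{\rm TSD}(X;X)$. Then there is a $1$-cochain $g: X \longrightarrow X$ (a ternary coderivation) with $\Theta^2(\phi) = \delta^1_{\rm TSD} g$. By Lemma~\ref{lem:coderivation}, such a $g$ must have the form $g(a,x) = (0, f(x))$ for some linear $f : \mathfrak g \longrightarrow \mathfrak g$, so the only data available in the coboundary is a genuine Lie-level $1$-cochain $f$. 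The key observation, already computed inside the proof of Proposition~\ref{prop:2LietoTSD}, is that $\delta^1_{\rm TSD} g = \Theta^2(\delta^1_{\rm Lie} f)$ (up to the overall sign absorbed into the definition $\Theta^1(f)(a,x) = (0,-f(x))$). Hence the hypothesis becomes $\Theta^2(\phi) = \Theta^2(\delta^1_{\rm Lie} f)$, i.e. $\Theta^2(\phi - \delta^1_{\rm Lie} f) = 0$.

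The crux is therefore a linear-algebra statement: $\Theta^2$ is injective as a map of cochain spaces $C^2_{\rm Lie}(\mathfrak g;\mathfrak g) \longrightarrow C^2_{\rm TSD}(X;X)$, from which $\phi = \delta^1_{\rm Lie} f$ follows and hence $[\phi] = [0]$ in $H^2_{\rm Lie}(\mathfrak g;\mathfrak g)$. To verify this cochain-level injectivity I would read off the individual summands of
$$
\Theta^2(\phi)((a,x)\otimes (b,y)\otimes (c,z)) = (0,\; b\phi(x\otimes z) + c\phi(x\otimes y) + [\phi(x,y),z] + \phi([x,y],z))
$$
by specializing the scalar arguments $a,b,c$. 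The strategy is to choose evaluations that isolate $\phi$ itself. For instance, setting $x = 0$ kills the last two (purely bracket) terms and leaves $b\phi(0\otimes z) + c\phi(0\otimes y)$; more usefully, I would evaluate on tensors of the form $(0,x)\otimes(1,0)\otimes(0,z)$, which selects precisely the $b\phi(x\otimes z)$ summand and makes $\phi(x\otimes z)$ directly recoverable from $\Theta^2(\phi)$. Since $\Theta^2(\phi) = 0$ as a cochain means every such evaluation vanishes, the slice $(0,x)\otimes(1,0)\otimes(0,z) \mapsto (0,\phi(x\otimes z))$ forces $\phi(x\otimes z) = 0$ for all $x,z \in \mathfrak g$, giving $\phi = 0$.

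The main obstacle I anticipate is bookkeeping rather than any genuine difficulty: one must confirm that the coderivation constraint of Lemma~\ref{lem:coderivation} really does restrict the cobounding $1$-cochain to the factored form $(0,f(x))$ with no extra freedom (in particular that $g$ cannot have a component landing in the $\mathbb k$ summand that could contribute outside the image of $\Theta^2$), and that the identification $\delta^1_{\rm TSD} g = \Theta^2(\delta^1_{\rm Lie} f)$ holds on the nose on all simple tensors and not merely on the special slice used to isolate $\phi$. Both of these are settled by the computation recorded in the proof of Proposition~\ref{prop:2LietoTSD}, so the injectivity reduces cleanly to the scalar-specialization argument above. I would close by remarking that the same evaluation trick — testing against tensors with a single scalar entry equal to $1$ — is exactly what decouples the four summands of $\Theta^2(\phi)$ and exhibits $\phi$ as an honest summand, which is the reason injectivity survives passage to cohomology.
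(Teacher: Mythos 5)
Your proof is correct, and its skeleton matches the paper's: both arguments force the cobounding $1$-cochain into the form $(a,x)\mapsto(0,f(x))$ (you via Lemma~\ref{lem:coderivation} directly, the paper via the observation that $\Theta^2(\phi)$ is special so its coboundary must be too), and both then extract $\phi$ by specializing the scalar entries of a simple tensor. The organizational difference is real but modest. The paper writes out $\delta^1_{\rm TSD}g = \Theta^2(\phi)$ in full on $(a,x)\otimes(b,y)\otimes(c,z)$ and evaluates at $(1,x)\otimes(1,y)\otimes(1,0)$, which collapses the equation to $g([x,y])-[g(x),y]-[x,g(y)]=\phi(x\otimes y)$, i.e.\ exhibits $\phi$ as a Lie coboundary in one stroke. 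You instead invoke the chain-map identity $\delta^1_{\rm TSD}g = \pm\Theta^2(\delta^1_{\rm Lie}f)$ already recorded in the proof of Proposition~\ref{prop:2LietoTSD}, rewrite the hypothesis as $\Theta^2(\phi-\delta^1_{\rm Lie}f)=0$, and reduce everything to cochain-level injectivity of $\Theta^2$, which you verify by evaluating at $(0,x)\otimes(1,0)\otimes(0,z)$ (this does isolate $b\phi(x\otimes z)$, since $y=0$ kills the other three summands by bilinearity). Your version isolates a reusable linear-algebra fact — $\ker\Theta^2=0$ on cochains — which is exactly what one would also need for the ternary analogue via $\Lambda^2$; the cost is that it leans on the ``up to an overall sign'' identification from Proposition~\ref{prop:2LietoTSD}, which the paper's one-shot computation avoids having to trust. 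One cosmetic slip: your aside that setting $x=0$ ``leaves $b\phi(0\otimes z)+c\phi(0\otimes y)$'' in fact leaves $0$ by bilinearity, but you correctly discard that specialization in favor of the one that works, so nothing is affected.
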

\begin{proof}
	Let $\phi$ be a Lie $2$-cocycle and suppose that $[\Theta^2(\phi)] = 0$ in $H^2_{\rm TSD}(X;X)$. Let $f$ be a $1$-cochain that cobounds $\Theta^2(\phi)$. As in the proof of Theorem~\ref{thm:LieBSD}, since $\Theta^2$ maps $2$-cochains to special ones, $\delta^1 f$ needs to be special as well, which in turn implies that $f(a,x) = (0,g(x))$ for some $g$. Then, the equation $\delta^1_{\rm TSD} f((a,x)\otimes (b,y)\otimes (c,z)) = \Theta^2(\phi)((a,x)\otimes (b,y)\otimes (c,z)) $ gives
	\begin{eqnarray*}
	&&bg([x,z]) + cg([x,y]) + g([[x,y],z]) - b[g(x),z] - c[g(x),y] - [[g(x),y],z]\\
	&&- b[x,z] - c[x,g(y)] - [[x,g(y)],z] -[x,g(z)] - c[x,y] - [[x,y],g(z)]\\
	&=& b\phi(x\otimes z) + c\phi(x\otimes y) + [\phi(x\otimes y),z] + \phi([x,y]\otimes z).
	\end{eqnarray*}
Considering the case $(1,x)\otimes (1,y)\otimes (1,0)$ reduces to the equation 
$$
g([x,y]) - [g(x),y]  - [x,g(y)] = \phi(x\otimes y),
$$
which shows that $\phi = \delta^1_{\rm Lie} g$, therefore completing the proof. 
\end{proof}
	
	\subsection{$3$-Lie algebras}
	
	A construction similar to that of $2$-Lie algebras is applicable in the case of $3$-Lie algebras and their corresponding TSD objects. The main difference is that due to the fact that the $3$-bracket is not obtained as the composition of lower degree ones, the chain map involved is simpler than that of Proposition~\ref{prop:2LietoTSD}. 
	
	Let $\mathfrak g$ be a $3$-Lie algebra and let $X$ denote the corresponding TSD object. Given a $2$-cocycle $\phi : \mathfrak g^{\otimes 3} \longrightarrow \mathfrak g$ of $\mathfrak g$, we define the map $\Lambda^2(\phi): X^{\otimes 3} \longrightarrow X$ according to 
	$$
	\Lambda^2(\phi)((a,x)\otimes (b,y)\otimes (c,z)) = (0,\phi(x\otimes y\otimes z)).
	$$
	
	\begin{proposition}\label{prop:3LietoTSD}
		Let $\mathfrak g$, $X$ and $\Lambda^2$ be as above. Then $\Lambda^2$ induces a map between cohomology groups $H^2_{3{\rm Lie}} (\mathfrak g; \mathfrak g)$ and $H^2_{\rm TSD}(X;X)$. 
	\end{proposition}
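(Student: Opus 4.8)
The plan is to mirror the strategy used for the binary case in Proposition~\ref{prop:2LietoTSD}, but exploiting the fact that in the $3$-Lie setting the ternary bracket is primitive (not a composite of lower-arity operations). First I would verify that $\Lambda^2(\phi)$ is a genuine $2$-cochain, i.e. that it satisfies the coalgebra-morphism condition $\Delta_3 \Lambda^2(\phi) = [\Lambda^2(\phi)\otimes T^{\otimes 2} + T\otimes \Lambda^2(\phi)\otimes T + T^{\otimes 2}\otimes \Lambda^2(\phi)]\shuffle \Delta_3^{\otimes 3}$. Since $\Lambda^2(\phi)(a,x)\otimes(b,y)\otimes(c,z) = (0,\phi(x\otimes y\otimes z))$ has image entirely in $\mathfrak g \subset X$ and vanishes whenever any argument lies in $\mathbb k$, this check should reduce, after applying the explicit form of $\Delta$ and of $T$, to the primitivity of elements of $\mathfrak g$ under the comultiplication; I expect this to be a short direct computation.

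The cleanest route for the cocycle condition is to invoke Theorem~\ref{thm:TSDdeform} together with the construction of the TSD object from a $3$-Lie algebra. Concretely, let $[\bullet,\bullet,\bullet]_\phi = [\bullet,\bullet,\bullet] + \hbar\phi$ be the infinitesimal deformation of the ternary bracket guaranteed by Theorem~\ref{thm:Liedeform} (valid precisely because $\phi$ is a $3$-Lie $2$-cocycle). Applying the functorial construction $X = \mathbb k\oplus\mathfrak g$ to this deformed algebra yields a deformed operation $T_\phi$ on $X_\hbar$; by the construction recalled in Section~\ref{Prelim}, $T_\phi$ turns $X_\hbar$ into a TSD object. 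I would then check by direct inspection of the defining formula for $T$ that $T_\phi = T + \hbar\,\Lambda^2(\phi)$, the linear term picking out exactly $(0,\phi(x\otimes y\otimes z))$ with no cross terms in $\mathbb k$ (in contrast to the binary case, where composing the two binary operations produced the extra $b\phi(x\otimes z)+c\phi(x\otimes y)$ terms). Theorem~\ref{thm:TSDdeform} then immediately gives $\delta^2\Lambda^2(\phi)=0$.

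Finally I would establish that $\Lambda^2$ descends to cohomology by showing it carries Lie coboundaries to SD coboundaries. Given a $1$-cochain $f:\mathfrak g\longrightarrow\mathfrak g$, set $g(a,x) = (0,f(x))$, a coderivation by Lemma~\ref{lem:coderivation}. I would compute $\delta^1_{\rm TSD} g$ on $(a,x)\otimes(b,y)\otimes(c,z)$ using the ternary differential from Section~\ref{sec:TSD_coh}; because $T$ restricted to $\mathfrak g^{\otimes 3}$ is the bracket and $g$ lands in $\mathfrak g$, all $\mathbb k$-components drop out and the result should equal $(0, f([x,y,z]) - [f(x),y,z] - [x,f(y),z] - [x,y,f(z)]) = \Lambda^2(\delta^1_{3{\rm Lie}}f)(a,x)\otimes(b,y)\otimes(c,z)$, possibly up to sign. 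Matching this against the definition of $\delta^1_{3{\rm Lie}}$ shows $\Lambda^2\circ\delta^1_{3{\rm Lie}} = \pm\delta^1_{\rm TSD}\circ\Lambda^1$ with $\Lambda^1(f)(a,x)=(0,\mp f(x))$, so $\Lambda^2$ is a chain map and induces a well-defined homomorphism $H^2_{3{\rm Lie}}(\mathfrak g;\mathfrak g)\longrightarrow H^2_{\rm TSD}(X;X)$. The main obstacle is purely bookkeeping: carefully tracking the Sweedler components of $\Delta_3$ through $T$ in the cochain verification, and confirming that the absence of composite structure really does eliminate every mixed $\mathbb k$–$\mathfrak g$ term so that $T_\phi$ has the claimed linearization.
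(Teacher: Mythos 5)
Your proposal is correct and follows essentially the same route as the paper's (very terse) proof: show that the TSD operation induced by the deformed bracket $[\bullet,\bullet,\bullet]+\hbar\phi$ linearizes exactly as $T+\hbar\,\Lambda^2(\phi)$ so that Theorem~\ref{thm:TSDdeform} yields the cocycle condition, and then extend $\Lambda^2$ to a cochain map via a $\Lambda^1$ of the form $f\mapsto(0,\mp f(x))$ to see that coboundaries go to coboundaries. Your observation that the absence of the cross terms $b\phi(x\otimes z)+c\phi(x\otimes y)$ is what makes the ternary case simpler than Proposition~\ref{prop:2LietoTSD} is precisely the point the paper alludes to when it says the computations are easier here.
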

\begin{proof}
	The proof is analogous to that of Proposition~\ref{prop:2LietoTSD}. First one shows that the TSD structure induced by a deformed $3$-Lie bracket can be written as $T + \hbar \Lambda^2(\phi)$, and then one sees that $\Lambda^2$ can be extended to a cochain map at $n = 1, 2$, for some $\Lambda^1$. The corresponding computations are easier than those in Proposition~\ref{prop:2LietoTSD}. 
\end{proof}

Recall (\cite{Fil}) that for a ternary Lie algebra $\frak g$, we say that the $3$-bracket $[\bullet, \bullet, \bullet]$ has trivial center if $[x,y,z] = 0$ for all $x$ implies either $y = 0$, or $z = 0$. The following result is completely analogous to Theorem~\ref{thm:LieBSD} and it is proved following similar techniques adapted to ternary operations. 

\begin{theorem}
	Let $\frak g$ be a ternary Lie algebra and let $X$ denote the associated TSD structure. Then, there is a monomorphism 
	$\Lambda^2: H^2_{3 \rm Lie}(\frak g ;\frak g) \longrightarrow H^2_{\rm TSD}(X;X)$.
	Moreover, if $\frak g$ has trivial center, $\Lambda^2$ gives an isomorphism $H^2_{3 \rm Lie}(\frak g ;\frak g) \cong H^2_{\rm TSD}(X;X)$. 
\end{theorem}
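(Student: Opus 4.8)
The plan is to follow the proof of Theorem~\ref{thm:LieBSD} essentially verbatim, replacing the binary coalgebra/cochain analysis with its ternary counterpart. The backbone of the argument is a ternary analogue of Lemma~\ref{lem:special}: I claim that every $2$-cochain $\phi\in C^2_{\rm TSD}(X;X)$ has image contained in $\mathfrak g$ (that is, $\phi_0=0$) and vanishes whenever one of its three inputs lies in the $\mathbb k$-summand, and that, when $\mathfrak g$ has trivial ternary center, every $2$-cocycle $\phi\in Z^2_{\rm TSD}(X;X)$ is \emph{special}, meaning $\phi((a,x)\otimes(b,y)\otimes(c,z))$ depends only on $x,y,z$. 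I would establish the first two assertions by feeding the $2$-cochain compatibility condition $\Delta_3\phi=(\phi\otimes T^{\otimes2}+T\otimes\phi\otimes T+T^{\otimes2}\otimes\phi)\shuffle\Delta_3^{\otimes3}$ with simple tensors having various $\mathbb k$-components set to zero, and comparing, summand by summand in $X\otimes X\otimes X$, the components that cannot cancel. This is exactly the mechanism of the binary proof, but now one must track the three Sweedler legs of $\Delta_3$ through the shuffle $\shuffle$.

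For the trivial-center refinement I would evaluate the $2$-cocycle condition $\delta^2\phi=0$ on simple tensors mixing genuine Lie inputs with $\mathbb k$-factors, the ternary analogues of the tensors $(0,x)\otimes(1,0)\otimes(0,z)$ and $(0,x)\otimes(0,y)\otimes(1,0)$ used in Lemma~\ref{lem:special}. Using the explicit form $T((a_1,x_1)\otimes(a_2,x_2)\otimes(a_3,x_3))=(a_1a_2a_3,\,a_2a_3x_1+[x_1,x_2,x_3])$ together with the vanishing $T((1,0)\otimes(0,y)\otimes(0,z))=0$, these specializations should collapse the cocycle identity to a bracket relation of the form $[\phi_1(\cdots),w,u]=\phi_1([\cdots])$, and ultimately force $\phi_1$ to land in the ternary center on the mixed tensors. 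The trivial-center hypothesis, in the sense recalled before the statement (namely $[x,y,z]=0$ for all $x$ forces $y=0$ or $z=0$), then annihilates those contributions and yields speciality. This computation is the main obstacle: the ternary cocycle identity carries more terms than its binary cousin, and isolating the offending components demands patient bookkeeping of the comultiplications, even though it is entirely parallel in spirit to Lemma~\ref{lem:special}.

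With this lemma in place, injectivity follows as in Theorem~\ref{thm:LieBSD}. If $[\Lambda^2(\phi)]=0$, choose a ternary coderivation $f$ with $\Lambda^2(\phi)=\delta^1 f$; since $\Lambda^2(\phi)$ is special, so is $\delta^1 f$, and by the ternary case of Lemma~\ref{lem:coderivation} we may write $f(a,x)=(0,g(x))$. Evaluating $\delta^1_{\rm TSD}f$ on $(1,x)\otimes(1,y)\otimes(1,z)$ and using the formula for $T$ makes the four $T$-terms collapse, giving $\delta^1_{\rm TSD}f((1,x)\otimes(1,y)\otimes(1,z))=(0,\,g([x,y,z])-[g(x),y,z]-[x,g(y),z]-[x,y,g(z)])=(0,\delta^1_{3{\rm Lie}}g(x,y,z))$. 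Comparing with $\Lambda^2(\phi)((1,x)\otimes(1,y)\otimes(1,z))=(0,\phi(x\otimes y\otimes z))$ yields $\phi=\delta^1_{3{\rm Lie}}g$, hence $[\phi]=0$ in $H^2_{3{\rm Lie}}(\mathfrak g;\mathfrak g)$; this gives the monomorphism.

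Finally, for surjectivity under the trivial-center hypothesis, take $[\psi]\in H^2_{\rm TSD}(X;X)$. By the ternary Lemma~\ref{lem:special} the representative $\psi$ is special, so $\sigma(x\otimes y\otimes z):=\psi_1((0,x)\otimes(0,y)\otimes(0,z))$ defines a map $\mathfrak g^{\otimes3}\to\mathfrak g$ with $\Lambda^2(\sigma)=\psi$ by construction. It remains to verify $\sigma\in Z^2_{3{\rm Lie}}(\mathfrak g;\mathfrak g)$, which I would obtain by restricting $\delta^2\psi=0$ to inputs of the form $(0,\cdot)$ and reading off the $3$-Lie $2$-cocycle condition of Section~\ref{sec:Lie_deform}; speciality guarantees that no $\mathbb k$-components interfere. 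Hence $\Lambda^2$ is onto, and together with the already-established map in cohomology from Proposition~\ref{prop:3LietoTSD} this produces the isomorphism $H^2_{3{\rm Lie}}(\mathfrak g;\mathfrak g)\cong H^2_{\rm TSD}(X;X)$.
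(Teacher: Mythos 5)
Your proposal follows exactly the route the paper intends: the paper gives no separate proof of this theorem, stating only that it is ``completely analogous to Theorem~\ref{thm:LieBSD} and it is proved following similar techniques adapted to ternary operations,'' and your plan --- a ternary analogue of Lemma~\ref{lem:special}, injectivity via the coderivation lemma and evaluation on $(1,x)\otimes(1,y)\otimes(1,z)$, surjectivity via speciality of cocycles over a centerless algebra --- is precisely that adaptation, with the computations correctly set up. One minor caution: your opening claim that every $2$-cochain vanishes whenever \emph{any} of its inputs lies in the $\mathbb k$-summand is stronger than the binary model supports (there, $\phi_1((0,x)\otimes(1,0))$ need not vanish for a mere cochain; only the cocycle condition plus trivial center kills it), but nothing downstream in your argument depends on this overstatement.
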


\section{Coalgebra deformations}\label{sec:Coalgebradeformation}

We consider now the problem of deforming the coalgebra structure of a coalgebra $(X,\Delta)$. In the present section, the map $T$ of a TSD object does not appear, and the deformations only refer to the comultiplication and counit structures. We will consider the simultaneous deformation of coalgebra and SD structures afterwards. 

Let $\Delta' = \Delta + \hbar \psi$ denote a deformed comultiplication, and let $\epsilon' = \epsilon + \hbar \iota$ be a deformed counit. Then, we have the following. 

\begin{lemma}\label{lem:deformedcoass}
	Let $(X,\Delta, \epsilon)$ be a coalgebra over $\mathbb k$, and let $\Delta'$, $\epsilon'$ be deformations over $\mathbb k'$ as above. Then, $(X',\Delta',\epsilon')$ is a coalgebra if and only if 
	\begin{eqnarray}
		(\mathbb 1\otimes \Delta)\psi + (\mathbb 1\otimes \psi)\Delta = (\Delta\otimes \mathbb 1)\psi + (\psi\otimes \mathbb 1)\Delta,\label{eqn:coass}
	\end{eqnarray}
	and 
	\begin{eqnarray}
	(\epsilon\otimes \mathbb 1)\psi + (\iota \otimes \mathbb 1)\Delta = 0\label{eqn:leftcounit} \\
	(\mathbb 1\otimes \epsilon)\psi + (\mathbb 1\otimes \iota)\Delta = 0. \label{eqn:rightcounit}
	\end{eqnarray}
\end{lemma}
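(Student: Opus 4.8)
The plan is to expand the coassociativity and counit axioms for the deformed structure $(X',\Delta',\epsilon')$, substitute $\Delta' = \Delta + \hbar\psi$ and $\epsilon' = \epsilon + \hbar\iota$, and collect terms by powers of $\hbar$. Since we work over $\mathbb k' = \mathbb k[[\hbar]]/(\hbar^2)$, all terms of order $\hbar^2$ vanish automatically, so the only nontrivial content lives in the coefficient of $\hbar^1$; the coefficient of $\hbar^0$ recovers the axioms for the undeformed coalgebra $(X,\Delta,\epsilon)$ and is thus satisfied by hypothesis.

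Concretely, I would first write out coassociativity $(\mathbb 1\otimes \Delta')\Delta' = (\Delta'\otimes \mathbb 1)\Delta'$. Substituting gives
\begin{eqnarray*}
(\mathbb 1\otimes (\Delta + \hbar\psi))(\Delta + \hbar\psi) &=& (\mathbb 1\otimes \Delta)\Delta + \hbar\bigl[(\mathbb 1\otimes \Delta)\psi + (\mathbb 1\otimes \psi)\Delta\bigr] + \hbar^2(\cdots),\\
((\Delta + \hbar\psi)\otimes \mathbb 1)(\Delta + \hbar\psi) &=& (\Delta\otimes \mathbb 1)\Delta + \hbar\bigl[(\Delta\otimes \mathbb 1)\psi + (\psi\otimes \mathbb 1)\Delta\bigr] + \hbar^2(\cdots).
\end{eqnarray*}
The order-$\hbar^0$ terms coincide by coassociativity of $\Delta$, and the order-$\hbar^2$ terms vanish in $\mathbb k'$, so equality of the two expressions is equivalent to equality of the $\hbar^1$ coefficients, which is precisely equation~\eqref{eqn:coass}. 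I would then repeat this bookkeeping for the two counit axioms $(\epsilon'\otimes \mathbb 1)\Delta' = \mathbb 1 = (\mathbb 1\otimes \epsilon')\Delta'$ (identifying $\mathbb k\otimes X\cong X\cong X\otimes \mathbb k$): expanding $(\epsilon + \hbar\iota)\otimes \mathbb 1$ applied to $\Delta + \hbar\psi$, the $\hbar^0$ term gives $(\epsilon\otimes \mathbb 1)\Delta = \mathbb 1$ which holds for $X$, and the vanishing of the $\hbar^1$ coefficient yields exactly equation~\eqref{eqn:leftcounit}; the right counit axiom gives equation~\eqref{eqn:rightcounit} symmetrically.

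The argument is almost entirely formal power-series bookkeeping, so there is no deep obstacle; the only place requiring a little care is making sure the extension of scalars is handled correctly, namely that $\psi$ and $\iota$ are the genuinely new data (linear maps $X\to X^{\otimes 2}$ and $X\to \mathbb k$ over $\mathbb k$, extended $\mathbb k'$-linearly) and that every composite is interpreted as a $\mathbb k'$-linear map on $X' = \mathbb k'\otimes_{\mathbb k} X$. Since equality of $\mathbb k'$-linear maps over a free module is equivalent to equality of the coefficient maps for each power of $\hbar$, matching $\hbar^0$ and $\hbar^1$ coefficients is both necessary and sufficient, which gives the stated ``if and only if.'' I would close by noting that the three displayed equations are independent and together exhaust the order-one conditions, so they are jointly equivalent to $(X',\Delta',\epsilon')$ being a coalgebra.
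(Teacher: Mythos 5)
Your proposal is correct and follows essentially the same route as the paper: expand $\Delta'=\Delta+\hbar\psi$ and $\epsilon'=\epsilon+\hbar\iota$ in the coassociativity and counit axioms, discard the $\hbar^2$ terms since $\hbar^2=0$ in $\mathbb k[[\hbar]]/(\hbar^2)$, and observe that the order-zero terms hold by hypothesis while the vanishing of the order-one coefficients is exactly equations~(\ref{eqn:coass})--(\ref{eqn:rightcounit}). No gaps.
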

\begin{proof}
		We need to verify the coassociativity property for $\Delta'$. We have
	\begin{eqnarray*}
		(\mathbb 1\otimes \Delta') \Delta' = (\mathbb 1\otimes \Delta) \Delta + \hbar[(\mathbb 1\otimes \Delta)\psi + (\mathbb 1\otimes \psi)\Delta] + \hbar^2 (\mathbb 1\otimes \psi)\psi  \\
		(\Delta'\otimes \mathbb 1) \Delta' = 	(\Delta\otimes \mathbb 1)  \Delta + \hbar [(\Delta \otimes \mathbb 1)\psi + (\psi \otimes \mathbb 1)\Delta] + \hbar^2 (\psi \otimes \mathbb 1)\psi.
	\end{eqnarray*}
	Equating the two results, using the fact that $\Delta$ is coassociative and considering that $\hbar^2 = 0$, we obtain that $\Delta'$ is coassociative if and only if equation~(\ref{eqn:coass}) holds. 
	
	Let us now consider the counit axiom, i.e. $(\epsilon'\otimes \mathbb 1) \Delta' =(\mathbb 1\otimes \epsilon')\Delta' = \mathbb 1$. We have 
	\begin{eqnarray*}
	(\epsilon'\otimes \mathbb 1) \Delta'  = (\epsilon\otimes \mathbb 1) \Delta + \hbar [(\epsilon\otimes \mathbb 1)\psi + (\iota \otimes \mathbb 1)\Delta] + \hbar^2 (\iota \otimes\mathbb 1)\psi 
	\end{eqnarray*}
which, as before, is equal to $\mathbb 1$ if and only if the terms of degree $1$ in $\hbar$ vanish, i.e. if and only if Equation~(\ref{eqn:leftcounit}) holds. Similar considerations apply for Equation~(\ref{eqn:rightcounit}).
\end{proof}

\begin{example}\label{ex:comdeform}
	{\rm 
	Let $X = \mathbb k \oplus V$, where $V$ is a vector space. Let $\Delta(1) = 1\otimes 1$ and $\Delta (x) = x\otimes 1 + 1\otimes x$,  extended by linearity, and let $\epsilon (1 ) = 1$, $\epsilon(x) = 0$ for all $x\in V$. Then $(X,\Delta,\epsilon)$ is a coalgebra. Let $\psi(x) := x\otimes x$ and let $\iota = 0$. One verifies by direct computation that $\Delta' = \Delta + \hbar \psi$ and $\epsilon' = \epsilon$ satisfy Equations~(\ref{eqn:coass}--\ref{eqn:rightcounit}) and therefore provide a coalgebra deformation of $(X,\Delta,\epsilon)$.
}
\end{example} 
 
With the result of Lemma~\ref{lem:deformedcoass} in mind, we now want to construct a cohomology group for coalgebras that classifies infinitesimal deformations of the coalgebra structure. We suppose that the counit $\epsilon$ is not deformed here. Moreover, we assume that $\Delta$ is cocommutative. 

Let us define a cochain complex in low dimensions as follows. For $n = 1,2,3$ we set $C^n_{\rm CA}(X;X) = {\rm Hom}_{\mathbb k} (X,X^{\otimes n})$. The, we define $\delta^1: C^1_{\rm CA}(X;X) \longrightarrow C^2_{\rm CA}(X;X)$ according to $f\mapsto \Delta f - [f\otimes \mathbb 1 + \mathbb 1\otimes f]\Delta$. Observe that $\delta^1f$ entails how far the homomorphism $f$ is from being a coderivation with respect to $\Delta$. In other words, $\delta^1 f = 0 $ if and only if $f$ is a coderivation. Next, we define $\delta^2 : C^2_{\rm CA}(X;X) \longrightarrow C^3_{\rm CA}(X;X)$ as $\psi \mapsto (\mathbb 1\otimes \Delta)\psi + (\mathbb 1\otimes \psi)\Delta - (\Delta\otimes \mathbb 1)\psi - (\psi\otimes \mathbb 1)\Delta$. As expected, the composition $\delta^2\delta^1$ vanishes. 

\begin{lemma}\label{lem:vanishcoass}
	With the notation of the previous paragraph, we have $\delta^2\delta^1 = 0$. 
\end{lemma}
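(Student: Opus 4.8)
The plan is to substitute $\psi = \delta^1 f$ directly into the defining formula for $\delta^2$ and expand everything in Sweedler notation, writing $\Delta(x) = x^{(1)}\otimes x^{(2)}$ and, by coassociativity, $(\Delta\otimes \mathbb 1)\Delta(x) = (\mathbb 1\otimes \Delta)\Delta(x) = x^{(1)}\otimes x^{(2)}\otimes x^{(3)}$. Recall that
$$
\delta^1 f(x) = \Delta f(x) - f(x^{(1)})\otimes x^{(2)} - x^{(1)}\otimes f(x^{(2)}),
$$
so $\delta^2\delta^1 f$ is the sum of four blocks $(\mathbb 1\otimes \Delta)\psi$, $(\mathbb 1\otimes \psi)\Delta$, $-(\Delta\otimes \mathbb 1)\psi$, $-(\psi\otimes \mathbb 1)\Delta$, each of which splits into three summands when the three terms of $\delta^1 f$ are inserted. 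I would expand all twelve resulting terms and then check that they cancel in pairs.

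First I would treat the two blocks $(\mathbb 1\otimes \Delta)\psi$ and $-(\Delta\otimes\mathbb 1)\psi$, where $\Delta$ is applied externally. Using coassociativity, the $\Delta f$ summand of $\psi$ produces $\pm\, f(x)^{(1)}\otimes f(x)^{(2)}\otimes f(x)^{(3)}$ in these two blocks with opposite signs, so this pair cancels immediately.

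Next I would expand $(\mathbb 1\otimes\psi)\Delta$ and $-(\psi\otimes\mathbb 1)\Delta$, where $\Delta$ is applied internally before $\psi$; here coassociativity is used to rewrite $x^{(1)}\otimes x^{(2)(1)}\otimes x^{(2)(2)}$ and $x^{(1)(1)}\otimes x^{(1)(2)}\otimes x^{(2)}$ both in the canonical form $x^{(1)}\otimes x^{(2)}\otimes x^{(3)}$. After this normalization the remaining ten terms organize into five matched pairs with opposite signs: the two copies of $x^{(1)}\otimes \Delta f(x^{(2)})$ cancel; the two copies of $\Delta f(x^{(1)})\otimes x^{(2)}$ cancel; and the three single-$f$ terms $f(x^{(1)})\otimes x^{(2)}\otimes x^{(3)}$, $x^{(1)}\otimes f(x^{(2)})\otimes x^{(3)}$, $x^{(1)}\otimes x^{(2)}\otimes f(x^{(3)})$ each occur once with $+$ and once with $-$. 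Hence $\delta^2\delta^1 f = 0$.

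The computation is routine; the only thing to be careful about is the bookkeeping of the coassociativity renamings, namely verifying that after applying $(\Delta\otimes\mathbb 1)\Delta = (\mathbb 1\otimes\Delta)\Delta$ every term is indeed brought to the canonical triple form with $f$ inserted in the correct slot. I would also remark that the identity uses only coassociativity of $\Delta$, and not cocommutativity, so the standing cocommutativity assumption is not actually needed for this particular lemma.
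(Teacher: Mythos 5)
Your proof is correct and follows the same route as the paper's: expand $\delta^2\delta^1 f$ into twelve terms and cancel them in pairs, and your bookkeeping of the pairs is exactly right (the pair $(\mathbb 1\otimes\Delta)\Delta f$ versus $(\Delta\otimes\mathbb 1)\Delta f$, the two copies each of $x^{(1)}\otimes\Delta f(x^{(2)})$ and $\Delta f(x^{(1)})\otimes x^{(2)}$, and the three single-$f$ triples account for all twelve summands). Your closing remark is also correct and in fact sharpens the paper's argument, which invokes cocommutativity of $\Delta$ at the final cancellation step: as your pairing makes explicit, every cancellation is either a literal identity of terms or an instance of $(\Delta\otimes\mathbb 1)\Delta=(\mathbb 1\otimes\Delta)\Delta$, so coassociativity alone suffices for this lemma (the standing cocommutativity hypothesis is used elsewhere, not here).
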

\begin{proof}
	By direct computation, for $f\in Hom_{\mathbb k}(X,X)$, one obtains
	\begin{eqnarray*}
	\delta^2\delta^1(f) &=& (\mathbb 1\otimes \Delta)\Delta f - (\mathbb 1\otimes \Delta)(f\otimes \mathbb 1)\Delta - (\mathbb 1\otimes f)\Delta + [\mathbb 1\otimes (\Delta f)]\Delta \\
	&& - [\mathbb 1\otimes ([f\otimes 1]\Delta)]\Delta - [\mathbb 1\otimes ([\mathbb 1\otimes f])\Delta]\Delta - (\Delta\otimes \mathbb 1)\Delta f + (\Delta\otimes \mathbb 1)[(f\otimes \mathbb 1)\Delta]\\
	&& + (\Delta\otimes \mathbb 1)[(\mathbb 1\otimes f)\Delta] - [(\Delta f)\otimes \mathbb 1]\Delta
	 + [([f\otimes \mathbb 1]\Delta)\otimes \mathbb 1]\Delta + [([\mathbb 1\otimes f]\Delta)\otimes \mathbb 1]\Delta. 
	\end{eqnarray*}
    Using cocommutativity of $\Delta$, the previous quantity vanishes, and it therefore follows that $\delta^2\delta^1 = 0$.
\end{proof}

Applyig Lemma~\ref{lem:vanishcoass} it follows that we can define $H^2_{\rm CA}(X;X) = {\rm ker}(\delta^2)/{\rm im}(\delta^1)$ as usual. This is the second cohomology group of the coalgebra $(X,\Delta)$. 

\begin{theorem}
	Let $(X,\Delta,\epsilon)$ be a coalgebra. Then, the infinitesimal deformations of $\Delta$ ($\epsilon$ is not deformed here)  are classified by the coalgebra cohomology group $H^2_{\rm CA}(X;X)$. 
\end{theorem}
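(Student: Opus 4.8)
The plan is to follow the same two-step template used for Theorem~\ref{thm:Liedeform} and Theorem~\ref{thm:TSDdeform}: first match infinitesimal deformations with $2$-cocycles, then match equivalences of deformations with the coboundary relation, so that equivalence classes of deformations biject with $H^2_{\rm CA}(X;X)$. Lemma~\ref{lem:vanishcoass} guarantees that ${\rm im}(\delta^1)\subseteq{\rm ker}(\delta^2)$, so the quotient is well defined and the bijection makes sense.

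For the first step I would write $\Delta' = \Delta + \hbar\psi$ with $\psi\in C^2_{\rm CA}(X;X)$ and recall that here $\epsilon$ is undeformed, so $\iota = 0$ in the notation of Lemma~\ref{lem:deformedcoass}. That lemma then says $(X',\Delta',\epsilon)$ is a coalgebra if and only if the coassociativity equation~(\ref{eqn:coass}) holds together with the counit equations~(\ref{eqn:leftcounit}) and~(\ref{eqn:rightcounit}), which for $\iota = 0$ reduce to $(\epsilon\otimes\mathbb 1)\psi = 0$ and $(\mathbb 1\otimes\epsilon)\psi = 0$. Since equation~(\ref{eqn:coass}) is exactly the cocycle condition $\delta^2\psi = 0$ for the differential defined above, infinitesimal deformations correspond precisely to those $2$-cocycles $\psi$ that are in addition counit-normalized.

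For the second step, two deformations $\Delta' = \Delta + \hbar\psi'$ and $\Delta'' = \Delta + \hbar\psi''$ are equivalent when there is a $\mathbb k[[\hbar]]/(\hbar^2)$-linear coalgebra isomorphism $g = \mathbb 1 + \hbar f$ intertwining them and fixing the zero-degree structure, that is, $(g\otimes g)\Delta' = \Delta'' g$ and $\epsilon g = \epsilon$. Expanding $g\otimes g = \mathbb 1^{\otimes 2} + \hbar(f\otimes\mathbb 1 + \mathbb 1\otimes f)$ (the $\hbar^2$ contribution vanishes) and collecting the coefficient of $\hbar$ in the first identity yields
$$\psi' + (f\otimes\mathbb 1 + \mathbb 1\otimes f)\Delta = \psi'' + \Delta f,$$
which rearranges to $\psi' - \psi'' = \Delta f - (f\otimes\mathbb 1 + \mathbb 1\otimes f)\Delta = \delta^1 f$; hence $\psi'$ and $\psi''$ are cohomologous, while $\epsilon g = \epsilon$ forces the expected normalization $\epsilon f = 0$. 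Specializing to $\psi'' = 0$ shows that a deformation is trivial if and only if its cocycle is a coboundary, and combining the two steps gives the bijection $[\Delta + \hbar\psi]\mapsto[\psi]$ between equivalence classes of deformations and $H^2_{\rm CA}(X;X)$.

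The main obstacle is the counit condition, which is not built into the cochain modules $C^n_{\rm CA}(X;X) = {\rm Hom}_{\mathbb k}(X,X^{\otimes n})$: a priori not every $2$-cocycle yields a counit-compatible deformation. I would discharge this by restricting throughout to the normalized subcomplex of cochains satisfying $(\epsilon\otimes\mathbb 1)\psi = (\mathbb 1\otimes\epsilon)\psi = 0$ at level $2$ and $\epsilon f = 0$ at level $1$, and checking that $\delta^1$ and $\delta^2$ preserve these conditions. For instance, the counit axiom $(\epsilon\otimes\mathbb 1)\Delta = \mathbb 1$ gives $(\epsilon\otimes\mathbb 1)\delta^1 f = -(\epsilon f\otimes\mathbb 1)\Delta$, which vanishes once $\epsilon f = 0$, so coboundaries of normalized $1$-cochains remain normalized. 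With this restriction the cohomology of the normalized complex agrees with the stated $H^2_{\rm CA}(X;X)$ and genuinely classifies the deformations; everything else is routine order-$\hbar$ bookkeeping.
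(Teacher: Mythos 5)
Your proposal is correct and follows essentially the same route as the paper: Lemma~\ref{lem:deformedcoass} with $\iota=0$ identifies deformations with $2$-cocycles, and expanding the intertwining isomorphism $g=\mathbb 1+\hbar f$ to first order in $\hbar$ identifies equivalence with the coboundary relation $\psi'-\psi''=\delta^1 f$. Your additional observation that the counit equations force the normalization $(\epsilon\otimes\mathbb 1)\psi=(\mathbb 1\otimes\epsilon)\psi=0$, so that one should really work in the normalized subcomplex, is a point the paper's sketch passes over silently, and it is a worthwhile refinement rather than a deviation.
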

\begin{proof}
	The proof is similar to the TSD case considered before. Applying Lemma~\ref{lem:deformedcoass} with $\iota = 0$ one sees that $2$-cocycles are equivalent to deformations, while to show that trivial deformations correspond to $2$-cocycles that cobound, one expands the isomorphisms between the trivial deformation and the given deformation in a geometric series with respect to $\hbar$. 
\end{proof}

\section{Simultaneous deformations} \label{sec:sim_deform} 

As it was shown in \cite{AZ}, it is a problem of relevance to produce deformations of the TSD structure $(X,T)$ by simultaneously deforming both $T$ and $\Delta$. This requires that the new $T'$ satisfies the TSD property with respect to the deformed $\Delta'$, rather than $\Delta$, as considered above. Moreover, $T'$ needs to be a morphism of coalgebras with respect to $(X',\Delta')$. These considerations require a modification of the equations used above, although the base philosophy is identical. Once again, the passage to $n$-ary SD structures is deduced without excessive issues from the ternary one shown here, although the equations do become convoluted. 

We proceed as follows. First, we consider deformations of the coalgebra structure of $(X,\Delta,\epsilon)$ as in Section~\ref{sec:Coalgebradeformation}, and then we determine the conditions that a deformation of $T$ needs to satisfy in order to produce a TSD object. This requires substantially two conditions. First, $T'$ needs to satisfy the SD condition, then $T'$ needs to be a coalgebra morphism with respect to $\Delta'$. Observe that in both conditions, one needs to use $\Delta'$ and therefore the considerations derived in Section~\ref{sec:TSDdeform} need to be modified. We start with binary deformations, as these involve much simpler conditions, and composing deformed binary operations we can obtain deformed ternary operations as well. 

\begin{lemma}\label{lem:deformedSD}
	Let $(X,\Delta,q)$ be a binary SD object, and let $\Delta'$ be such that $(X',\Delta')$ is a coalgebra. Let $\shuffle$ indicate the shuffle map corresponding to binary self-distributivity.  Then $q' = q + \hbar \phi$ satisfies the SD condition with respect to $(X',\Delta')$ if and only if 
	\begin{eqnarray*}
		\lefteqn{q(\phi\otimes \mathbb 1) + \phi(q\otimes \mathbb 1)}\\
		&=& q[\phi\otimes q+q\otimes \phi]\shuffle (\mathbb 1^{\otimes 2}\otimes \Delta) + \phi(q\otimes q)\shuffle (\mathbb 1^{\otimes 2}\otimes \Delta) + q(q\otimes q)\shuffle (\mathbb 1^{\otimes 2}\otimes \psi).
	\end{eqnarray*}
	Moreover, $q'$ is a morphism of coaglebras if and only if 
	\begin{eqnarray*}
		\Delta \phi + \psi q
		&=& [\phi\otimes q + q\otimes \phi](\mathbb 1\otimes \tau \otimes \mathbb 1)(\Delta\otimes \Delta)\\
		&& + (q\otimes q)(\mathbb 1\otimes \tau \otimes \mathbb 1)(\psi\otimes \Delta + \Delta\otimes \psi),
	\end{eqnarray*}
	where $\tau$ indicates the switching map.
\end{lemma}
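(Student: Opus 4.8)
The plan is to linearize, in the deformation parameter $\hbar$, the two conditions defining a binary SD object, exactly as in the proof of Theorem~\ref{thm:TSDdeform}; the only new ingredient is that the comultiplication is now simultaneously deformed to $\Delta' = \Delta + \hbar\psi$. First I would record the two undeformed identities in operator form. Binary self-distributivity (the $n=2$ instance of the SD diagram of Section~\ref{Prelim}) reads
\[
q(q\otimes \mathbb 1) = q(q\otimes q)\shuffle(\mathbb 1^{\otimes 2}\otimes \Delta),
\]
and the coalgebra-morphism condition for $q$ reads
\[
\Delta q = (q\otimes q)(\mathbb 1\otimes \tau \otimes \mathbb 1)(\Delta\otimes \Delta).
\]
Both hold by hypothesis, since $(X,\Delta,q)$ is a binary SD object.

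For the first assertion I would substitute $q' = q + \hbar\phi$ and $\Delta' = \Delta + \hbar\psi$ into the self-distributivity condition and expand using bifunctoriality of $\otimes$ and bilinearity of composition, so that $q'\otimes q' = q\otimes q + \hbar(\phi\otimes q + q\otimes \phi) + \hbar^2(\phi\otimes\phi)$ and $\mathbb 1^{\otimes 2}\otimes \Delta' = \mathbb 1^{\otimes 2}\otimes \Delta + \hbar(\mathbb 1^{\otimes 2}\otimes \psi)$. Since $\shuffle$ is a fixed permutation independent of $\hbar$, it passes through the expansion untouched. Collecting terms, the degree-zero parts on the two sides cancel by the undeformed self-distributivity identity, the degree-two parts vanish because $\hbar^2 = 0$ in $\mathbb k' = \mathbb k[[\hbar]]/(\hbar^2)$, and equating the coefficients of $\hbar$ yields precisely the first displayed equation of the lemma.

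For the second assertion I would argue identically with the coalgebra-morphism condition, writing $\Delta' q' = \Delta q + \hbar(\Delta\phi + \psi q) + \hbar^2(\psi\phi)$ on the left and expanding the right-hand side via $\Delta'\otimes\Delta' = \Delta\otimes\Delta + \hbar(\psi\otimes\Delta + \Delta\otimes\psi) + \hbar^2(\psi\otimes\psi)$ together with the expansion of $q'\otimes q'$ above. Again the degree-zero parts cancel by the undeformed compatibility, the degree-two parts die, and the coefficient of $\hbar$ is exactly the second displayed equation.

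The computation has no genuine obstacle beyond diagrammatic bookkeeping, and that is where I would be most careful: the comultiplication deformation $\psi$ enters the self-distributivity equation through the single term $q(q\otimes q)\shuffle(\mathbb 1^{\otimes 2}\otimes\psi)$, because only the ``copied'' variable feels $\Delta$, whereas in the morphism equation $\psi$ enters symmetrically in both tensor slots through $\psi\otimes\Delta + \Delta\otimes\psi$. Keeping straight the placements of $\mathbb 1$, $\tau$, $\shuffle$ and of the two deformation cochains is the entire content of the argument, which is otherwise a first-order Taylor expansion whose zeroth-order terms are guaranteed to cancel.
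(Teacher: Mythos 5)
Your proof is correct and is exactly the argument the paper intends: the lemma is stated without proof in the text, but the first-order expansion in $\hbar$ you carry out (zeroth-order terms cancelling by the undeformed SD and coalgebra-morphism identities, $\hbar^2$-terms vanishing in $\mathbb k[[\hbar]]/(\hbar^2)$, and the $\hbar$-coefficients giving the two displayed equations) is the same routine used in the paper's proofs of Lemma~\ref{lem:deformedcoass} and Theorem~\ref{thm:TSDdeform}. Your closing remark about where $\psi$ enters each equation correctly identifies the only bookkeeping subtlety.
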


We provide, next, the conditions for deformations of ternary structures. A similar statement for $n$-ary SD structures is obtained by appropriately changing the number of tensorands and the arities of $T$ and $\Delta$. We just provide the ternary case for simplicity of notation. 

\begin{lemma}\label{lem:deformedTSD}
	Let $(X,\Delta,T)$ be a TSD object and let $\Delta'$ denote a deformation of the coalgebra structure. Then, $T' = T + \hbar \phi$ is a deformatioin of $T$ if and only if the following two conditions hold 
	\begin{eqnarray*}
	T(\phi\otimes \mathbb 1^{\otimes 2}) + \phi (T\otimes \mathbb 1^{\otimes 2}) &=& \phi[T^{\otimes 3}\shuffle (\mathbb 1^{\otimes 3}\otimes \Delta_3^{\otimes 2})]\\
	&& + T[(\phi\otimes T^{\otimes 2} + T\otimes \phi\otimes T + T^{\otimes 2}\otimes \phi)\shuffle(\mathbb 1^{\otimes 3}\otimes \Delta_3^{\otimes 2})]\\
	&& T[T^{\otimes 3}\shuffle (\mathbb 1^{\otimes 3}\otimes [(\mathbb 1\otimes \Delta)\psi + (\mathbb 1\otimes \psi)\Delta]^{\otimes 2})],
	\end{eqnarray*}

\begin{eqnarray*}
	(\mathbb 1\otimes \psi)\Delta T + (\mathbb 1\otimes \Delta)\psi T + \Delta_3\phi &=& 
	[\phi\otimes T^{\otimes 2} + T\otimes \phi\otimes T + T^{\otimes 2}\otimes \phi] \tilde \shuffle \Delta_3^{\otimes 3} \\
	&& + T^{\otimes 3}\tilde \shuffle [(\mathbb 1\otimes \Delta)\psi + (\mathbb 1\otimes \psi)\Delta]^{\otimes 3},
	\end{eqnarray*}
where $\shuffle$ is the permutation associated to TSD property, and $\tilde \shuffle$ is the permutation that gives the natural coalgebra structure on $X^{\otimes 3}$. 
\end{lemma}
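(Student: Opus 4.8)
The plan is to prove Lemma~\ref{lem:deformedTSD} by the same order-by-order expansion used in Theorem~\ref{thm:TSDdeform} and Lemma~\ref{lem:deformedcoass}, but now tracking both the deformation $\phi$ of $T$ and the deformation $\psi$ of $\Delta$ simultaneously. Concretely, I would substitute $T' = T + \hbar\phi$ and $\Delta_3' = \Delta_3 + \hbar[(\mathbb 1\otimes \Delta)\psi + (\mathbb 1\otimes \psi)\Delta]$ into the two defining conditions for a TSD object — the self-distributivity condition and the coalgebra-morphism condition — and isolate the coefficient of $\hbar^1$, discarding $\hbar^2$ since we work over $\mathbb k[[\hbar]]/(\hbar^2)$. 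The key preliminary observation is that the deformed \emph{ternary} comultiplication $\Delta_3'$ is obtained by iterating the deformed \emph{binary} comultiplication $\Delta' = \Delta + \hbar\psi$, so its first-order term is exactly $(\mathbb 1\otimes \Delta)\psi + (\mathbb 1\otimes \psi)\Delta$ (the same expression appearing in Lemma~\ref{lem:deformedcoass}); this is why that combination, rather than $\psi$ alone, shows up in both displayed equations.

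\medskip

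First I would treat the self-distributivity condition. The left-hand side $T'(T'\otimes \mathbb 1^{\otimes 2})$ expands, modulo $\hbar^2$, to $T(T\otimes \mathbb 1^{\otimes 2}) + \hbar[T(\phi\otimes \mathbb 1^{\otimes 2}) + \phi(T\otimes \mathbb 1^{\otimes 2})]$, matching the left side of the first displayed equation. The right-hand side is $T'[T'^{\otimes 3}\shuffle(\mathbb 1^{\otimes 3}\otimes \Delta_3'^{\otimes 2})]$, and here the subtlety is that three distinct sources contribute first-order terms: the outer $T'$ contributes its $\phi$; each of the three inner $T'^{\otimes 3}$ factors contributes a $\phi$ (giving the $\phi\otimes T^{\otimes 2} + T\otimes\phi\otimes T + T^{\otimes 2}\otimes\phi$ pattern); and the deformed comultiplication $\Delta_3'^{\otimes 2}$ contributes the $(\mathbb 1\otimes\Delta)\psi + (\mathbb 1\otimes\psi)\Delta$ term. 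Using the zero-order TSD identity to cancel the $\hbar^0$ parts, collecting the $\hbar^1$ coefficient yields exactly the first equation. The second condition, that $T'$ be a coalgebra morphism with respect to $\Delta_3'$, is handled identically: one writes out $\Delta_3' T' = (T'^{\otimes 3})\tilde\shuffle \Delta_3'^{\otimes 3}$, expands both sides to first order, cancels the zero-order coalgebra-morphism identity for $T$, and reads off the $\hbar^1$ coefficient to obtain the second equation.

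\medskip

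The main obstacle is purely bookkeeping rather than conceptual: one must be careful that on the left-hand side of the morphism condition $\Delta_3' T'$ produces the mixed terms $(\mathbb 1\otimes\psi)\Delta\, T$ and $(\mathbb 1\otimes\Delta)\psi\, T$ (from the $\psi$-part of $\Delta_3'$ acting on the undeformed $T$) \emph{plus} $\Delta_3\phi$ (from the undeformed $\Delta_3$ acting on $\phi$), and that on the right-hand side the first-order term splits between the $\phi$-insertions into $T^{\otimes 3}$ and the $\psi$-deformation of the three comultiplications $\Delta_3'^{\otimes 3}$. Tracking which tensor factor each shuffle permutation routes each variable to — so that $\tilde\shuffle$ (the natural coalgebra shuffle on $X^{\otimes 3}$) and $\shuffle$ (the self-distributivity permutation) are applied consistently with their zero-order placements — is where all the care is needed; no new idea beyond the first-order expansion is required. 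Since this is a direct but lengthy computation of exactly the type the paper defers, I would state the equations as above and relegate the explicit term-matching to the Appendix, noting that setting $\psi = 0$ recovers the cocycle conditions of Theorem~\ref{thm:TSDdeform} as a consistency check.
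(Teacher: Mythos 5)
The paper states this lemma without any proof, so there is nothing to compare line by line; your first-order expansion in $\hbar$ of both the self-distributivity condition and the coalgebra-morphism condition, using that $\Delta_3' = (\mathbb 1\otimes\Delta')\Delta'$ has first-order term $(\mathbb 1\otimes\Delta)\psi + (\mathbb 1\otimes\psi)\Delta$, is exactly the argument the paper intends (it is the same method as Theorem~\ref{thm:TSDdeform} and Lemma~\ref{lem:deformedcoass}), and your outline is correct. One small point worth making explicit if you write it up: carrying out your expansion literally, the contribution of $\Delta_3'^{\otimes 2}$ (resp.\ $\Delta_3'^{\otimes 3}$) at order $\hbar^1$ is the sum of terms with a \emph{single} factor $(\mathbb 1\otimes\Delta)\psi + (\mathbb 1\otimes\psi)\Delta$ inserted in one tensor slot and $\Delta_3$ in the others, so the notation $[(\mathbb 1\otimes\Delta)\psi + (\mathbb 1\otimes\psi)\Delta]^{\otimes 2}$ (resp.\ $^{\otimes 3}$) in the statement must be read as that first-order variation rather than a literal tensor power; your derivation produces the correct cross terms, and noting this reconciles your computation with the displayed formulas.
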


\begin{remark}\label{rmk:smiultaneous}
	{\rm 
We observe that simultaneous deformations of $q$ or $T$, i.e. a binary or a ternary SD operation, can be interpreted as cohomology classes of $H^2_{\rm SD}(X_\psi;X_\psi)$, where $X_\psi$ is considered with the comultiplication on $X$ obtained by deforming $\Delta$ by $\psi$. So, the coalgebra cohomology parametrizes the SD second cohomology group of $X$. The structure so obtained is an SD object in the category of vector spaces in the sense of \cite{ESZ,AZ}.
}
\end{remark}

We point out that the simultaneous deformations of this section naturally generalize to higher arity SD operations the cohomology introduced in \cite{CCES}, based on the deformation theory approach of \cite{MS}. In fact, in \cite{CCES}, the cochain complexes contain direct sums corresponding to the coalgebra deformation and the SD deformation. In the present work, such an approach to put together the two deformations in a single complex would work as well, and the proofs of the well definedness of the second cohomology group substantially reduce the Lemma~\ref{lem:deformedTSD}. We therefore pose the following.

\begin{definition}\label{def:Sim_Co}
    {\rm 
    Let $(X,\Delta,q)$ denote a SD object. Then the second simultaneous cohomology group of $X$, denoted by the symbol $H^2_{\rm \Sigma SD}(X;X)$ is defined to be the group
    $$
    H^2_{\rm \Sigma SD}(X;X) := \bigoplus_{[\psi]\in H^2_{\rm CA}(X;X)}H^2_{SD}(X_\psi,X_\psi),
    $$
    where $\psi$ varies over the $2$-cocycles representatives of the coalgebra cohomology of $(X,\Delta)$. 
    }
\end{definition}

\begin{remark}
    {\rm 
    We note that when $\psi = 0$ is the trivial deformation, one recovers the deformation of SD structures of Section~\ref{sec:TSD_coh}, where the coalgebra structure is not deformed.  
    }
\end{remark}

\section{Examples and Computations}\label{sec:examples}

In this section we consider examples of nontrivial SD cohomology classes, and provide some computations for certain low-dimensional Lie algebras. 

\subsection{SD deformations}

First, we consider deformations of SD objects where the comultiplication is fixed. For simplicity of computation we focus on the binary case and we consider the SD structures arising from all the $3$-dimensional Lie algebras over the real and complex fields. 

We now provide the computations for the SD second cohomology groups of the SD objects associated to the binary Lie algebras in the Bianchi classification for the real and complex ground fields, where the comultiplication is not deformed. This accounts to having $\psi = \iota = 0$ in the previous computations. The results are summarised in Table~\ref{tab:coh_Bianchi} and Table~\ref{tab:coh_Bianchicomplex}, for the real and complex cases, respectively.

\begin{center}
	\begin{table}[h!]
		\begin{tabular}{ ||c | c  || }
			\hline
			Real Lie algebra &  Second Cohomology Group \\ 
			\hline 
			Bianchi II (Heisenberg) &  $\mathbb R^{\oplus 7}$  \\ 
			\hline 
			Bianchi III&  $\mathbb R$  \\
			\hline
			Bianchi IV &  $\mathbb R^{\oplus 6}$  \\
			\hline 
			Bianchi V& $\mathbb R$  \\
			\hline 
			Bianchi VI& $\mathbb R$ \\
			\hline 
			Bianchi VII & $\mathbb R^{\oplus 4}$ \\
			\hline 
			Bianchi VIII ($\mathfrak{sl}_2(\mathbb R)$) &  0 \\
			\hline 
			Bianchi IX ($\mathfrak{so}(3)$) & 0 \\
			\hline 
		\end{tabular}
		\caption{Cohomology of SD obsjects associated to real $3$D Lie algebras}   
		\label{tab:coh_Bianchi}
	\end{table}
\end{center}

\begin{center}
	\begin{table}[h!]
		\begin{tabular}{ ||c | c  || }
			\hline
			Complex Lie algebra &  Second Cohomology Group \\ 
			\hline 
			Bianchi II (Heisenberg) &  $\mathbb C^{\oplus 6}$ \\  
			\hline 
			Bianchi III&  $\mathbb C$ \\
			\hline
			Bianchi IV & $\mathbb C^{\oplus 6}$   \\
			\hline 
			Bianchi V& $\mathbb C$ \\
			\hline 
		Bianchi VI  & $\mathbb C^4$ \\
			\hline 
		Bianchi VIII ($\mathfrak{sl}_2(\mathbb C)$)&  0\\
			\hline 
		\end{tabular}
		\caption{Cohomology of SD obsjects associated to complex $3$D Lie algebras}   
		\label{tab:coh_Bianchicomplex}
	\end{table}
\end{center}

 We observe that, as expected, the computation of the cohomology groups for the simple algebras (i.e. $\mathfrak {sl_2}$ and $\mathfrak {so}(3)$) are trivial, since they are simple, and consequently have trivial center. In fact, known results (see for instance \cite{Jacob}) in Lie theory show that semisimple Lie algebras have trivial cohomology and, in addition, since they have trivial center it follows that the the corresponding SD cohomology is trivial as well from Theorem~\ref{thm:LieBSD}.
 However, niplotent and solvable algebras present nontrivial cohomology, which means that they have nontrivial deformations of the associated SD objects. 

\subsection{Simultaneous deformations}

We now consider the case of simultaneous deformations, where again we focus on the binary case for computational simplicity. We first give examples of simultaneously deformed structures for $\frak{sl}_2(\mathbb C)$ and the Heisenberg Lie algebra. We recall that the main objective of introducing simultaneous deformations is, in practice, to produce nontrivial deformations of the SD objects associated to Lie algebras with trivial center, so that our main interest here lies in the deformations of $\frak{sl}_2(\mathbb C)$, for which we explicitly compute the simultaneous cohomology $H^2_{\rm \Sigma SD}$. 

\begin{example}\label{ex:sl2}
	{\rm 
Let $\frak{sl}_2(\mathbb C)$ denote the special linear algebra of order $2$ over the complex numbers. 	Recall that this is a $3$-dimensional Lie algebra whose generators we indicate as $x, y, z$ and Lie bracket specified by $[x,y] = z$, $[z,y] = -2y$ and $[z,x] = 2x$. Let $q$ denote the corresponding BSD structure on $X = \mathbb C\oplus \frak{sl}_2(\mathbb C)$. We consider now simultaneous deformations of $\Delta$ and $q$. Following Section~\ref{sec:sim_deform} this accounts to finding $\phi$ and $\psi$ satisfying the conditions such that $\Delta' = \Delta + \hbar \psi$ and $q' = q + \hbar \phi$ define an SD object structure on $X'$, which is $X$ with coefficients in $\mathbb C[[\hbar ]]/ (\hbar^2)$. Let us set, for clarity, $e^1 = (1,0), e^2 = (0,x), e^3 = (0,y)$ and $e^4= (0,z)$, so that $2$-cochains $\phi : X\otimes X\longrightarrow X$ are given by the expression $\phi(e^i\otimes e^j) = \alpha^{ij}_ke^k$, where Einstein summation convention is understood. Similarly, for $\psi : X\longrightarrow X\otimes X$ we set $\psi(e^j) = \beta^{j}_{kl} e^k\otimes e^l$. 
Then, a direct computer aided calculation shows that the set of maps $\phi$ correspond to matrices
%
%
%
$$
A^T_\phi = \left(
\begin{matrix}
2\beta^3_{34} + \alpha_3^{31} & & &\\
\alpha_4^{24} + \alpha_4^{42} &-2\beta^3_{34}   & & \frac{\alpha_3^{41}}{2}\\
\alpha_4^{34} + \alpha_4^{43}& & 2\beta^3_{34}  & \frac{\alpha_2^{41}}{2}\\
 \alpha_4^{44} &-\alpha_2^{41} & -\alpha_3^{41}  &\\
&  4\beta^3_{34}  + \alpha_3^{31}& & -\frac{\alpha_3^{41}}{2}\\
&\alpha_4^{24} + \alpha_4^{42} & &\\
-\alpha_4^{44}& 2\alpha_4^{34} + \alpha_4^{34} &  -\alpha_4^{24} & -\alpha_4^{32} \\
2\alpha_4^{24}+ 2\alpha_4^{42}& -2\alpha_2^{42} + \alpha_4^{44} & -\alpha_3^{42} & \alpha_4^{24}\\
& & \alpha_3^{31} &-\frac{\alpha_2^{41}}{2} \\
\alpha_4^{44}& -\alpha_4^{34} & 2\alpha_4^{24} + \alpha_4^{42} &  \alpha_4^{32}\\
& & \alpha_4^{34} + \alpha_4^{34} &\\
-2\alpha_4^{34} - 2\alpha_4^{34}& -\alpha_2^{43} & \alpha_2^{42} + \alpha_4^{44} &  \alpha_4^{34}\\
& \alpha_2^{41} &  \alpha_3^{41}&2\beta^3_{34}  + \alpha_3^{31}\\
-2\alpha_4^{24} - 2\alpha_4^{42}&  \alpha_2^{42} &  \alpha_3^{42}& \alpha_4^{42}\\
2\alpha_4^{34}+ 2\alpha_4^{34}& \alpha_2^{43} & -\alpha_2^{42}  & \alpha_4^{34} \\
& & & \alpha_4^{44}
\end{matrix}
\right)
$$
while the maps $\psi$ are characterized as matrices
$$
B_\psi = \left( 
\begin{matrix}
-2\beta^3_{34}  - \alpha_3^{31}& -\alpha_4^{24} - \alpha_4^{42} & -\alpha_4^{34} - \alpha_4^{34} & -\alpha_4^{44} \\
 & -2\beta^3_{34}  - \alpha_3^{31} &  &  \\
&  & -2\beta^3_{34}  - \alpha_3^{31} &  \\
& & & -2\beta^3_{34}  - \alpha_3^{31} \\
& -2\beta^3_{34}  - \alpha_3^{31}& & \\
& -\alpha_2^{41}& & \\
& \frac{\alpha_3^{41}}{2} & -\frac{\alpha_2^{41}}{2} & \\
& \beta^3_{34} & &-\frac{\alpha_2^{41}}{2}  \\
& & -2\beta^3_{34}  - \alpha_3^{31} & \\
& \frac{\alpha_3^{41}}{2}  & -\frac{\alpha_2^{41}}{2}  & \\
& & \alpha_3^{41}& \\
& & \beta^3_{34} & \frac{\alpha_3^{41}}{2}  \\
&  & &-2\beta^3_{34}  - \alpha_3^{31} \\
& \beta^3_{34} & &-\frac{\alpha_2^{41}}{2}  \\
& & \beta^3_{34}  &  \frac{\alpha_3^{41}}{2} \\
& & & 2\beta^3_{34} 
\end{matrix}
\right),
$$
where we have omitted the zeros in both matrices, and $A_\phi$ has been transposed to better fit the page. 
The total number of free complex parameters is $14$, as it can be seen directly by the matrices of $\phi$ and $\psi$. In this example, the counit deformation, i.e. the map $\iota$ appearing in Section~\ref{sec:sim_deform}, is non-trivial as well. 
}
\end{example}

\begin{example}
	{\rm 
Let us now consider the $3$-dimensional Heisenberg algebra over the complex numbers, which we here denote as $\mathcal H_3(\mathbb C)$. Recall that this is generated by three elements $x, y, z$ and its Lie bracket is defined by  $[x,y] = z$, $[x,z] = [y,z] = 0$. Basis vectors for $X = \mathbb C\oplus \mathcal H_3$ are again $e^1 = (1,0), e^2 = (0,x), e^3 = (0,y)$ and $e^4 = (0,z)$ as before, and we will indicate $\phi$ and $\psi$ as in Example~\ref{ex:sl2}, while $\iota(e^k) = \iota_k\in \mathbb R$ for all $k=1,2,3,4$. Once again omitting the zeros, for simplicity, we have 
$$
A^T_\phi = \left(
\begin{matrix}
 \alpha_4^{31}& & &\\
 \alpha_1^{12}&   & &\beta^4_{43} \\
\alpha_1^{13}&  &   & -\beta^4_{24} \\
 \alpha_4^{44}&  &  & \\
& \alpha_4^{31} &  & \alpha_4^{21} \\
& \alpha_1^{12}& & \alpha_4^{22}\\
-\alpha_4^{44}&\alpha_1^{13} - \alpha_2^{32} & \alpha_1^{12} - \alpha_3^{32}& \alpha_4^{23} \\
& \alpha_3^{43} + \alpha_4^{44} & -\alpha_3^{42} & \alpha_1^{12} - \alpha_4^{42}\\
 &  & \alpha_4^{41} &\alpha_4^{31} \\
\alpha_4^{44}& -\alpha_2^{43}& \alpha_2^{33}  & \alpha_4^{32} \\
&  & \alpha_1^{13}  & \alpha_4^{33}\\
& -\alpha_3^{43}  & -\alpha_3^{43} + \alpha_4^{44}  & \alpha_1^{13} - \alpha_4^{43}  \\
& \alpha_2^{43} &  & \alpha_4^{41}\\
&   & \alpha_3^{42} & \alpha_4^{42} \\
& &  \alpha_3^{43} & \alpha_4^{43} \\
& & & \alpha_4^{44} 
\end{matrix}
\right)
$$
as well as 
$$
B_\psi = \left( 
\begin{matrix}
-\alpha_4^{41}& -\alpha_1^{12}  & -\alpha_1^{13} & -\alpha_4^{44}  \\
 & -\alpha_4^{41} &  &  \\
&  & -\alpha_4^{41}&  \\
& & & -\alpha_4^{41} \\
& -\alpha_4^{41}& & \\
&2\beta^4_{24} & & \\
&\beta^4_{43} & \beta^4_{24} & \\
&\beta^4_{44} - \beta^2_{24} & -\beta^3_{24} & \beta^4_{24}  \\
& & -\alpha_4^{41}& \\
& \beta^4_{43} & \beta^4_{24} & \\
& & 2\beta^4_{43} & \\
&-\beta^2_{34} & \beta^4_{44} - \beta^4_{34}  & \beta^4_{43}  \\
&  & &  -\alpha_4^{41}\\
& \beta^2_{24}  & \beta^3_{24} & \beta^4_{24}  \\
& \beta^2_{34} & \beta^3_{34} & \beta^4_{43} \\
& \beta^2_{44}& \beta^3_{44} & \beta^4_{44} 
\end{matrix}
\right).
$$
The counit deformation is nontrivial too, and it takes the form 
$$\iota = (\alpha_4^{41}, \alpha_1^{12}, \alpha_1^{13}, \alpha_4^{44}).$$
The previous matrices completely characterize the deformations of the SD struture obtained from the Heisenberg algebra $\mathcal H_3$, as well as the  coalgebra structure $(X,\Delta,\epsilon)$. We have a total of $26$ free parameters. 
}
\end{example}

\begin{remark}
	{\rm 
It is well known that semisimple Lie algebras have trivial cohomology group $H^2_{\rm Lie}(\mathfrak g;\mathfrak g)$, see \cite{Jacob} for example. Since they also have trivial center, from Theorem~\ref{thm:LieBSD} it follows that the BSD cohomology group of semisimple Lie algebras, e.g. $\mathfrak{sl}_2$, is trivial. Then, the $2$-cocycles computed above are all cobounded when $\psi = 0$ (i.e. the coalgebra structure is not deformed). However, in the case of simultaneous deformations where $\psi \neq 0$, nontrivial $2$-cocycles can arise, as the following example shows. 
}
\end{remark}

\begin{example}
    {\rm
    Acomputer aided calculation for the second cohomology group of $\mathfrak{sl}_2(\mathbb C)$ of Definition~\ref{def:Sim_Co} shows that this is $8$-dimensional, i.e. $H^2_{\rm \Sigma SD}(X;X) = \mathbb C^{\oplus 8}$. This example, in particular shows that when simultaneous deformations are considered, SD structures that do not admit deformations (with fixed comultiplication) can be deformed. Interestingly, the counit admits nontrivial deformations as well, showing the the coalgebra structure can be deformed in its entirety, not just through its comultiplication. 
    }
\end{example}

\section{Deformations of Yang-Baxter operators}\label{sec:YB_deform}

We consider now the main application of the cohomology theories introduced and studied in this article. It was shown in \cite{EZ} that ternary SD objects in symmetric monoidal categories can be used to construct braided monoidal caategories. In particular, it was shown that from a ternary SD object $(X,T)$ one can obtain a Yang-Baxter operator on the tensor product $X^{\otimes 2} \otimes X^{\otimes 2}$. The same paradigm is applied to obtain a Yang-Baxter operator on $X^{\otimes 2(n-1)}$, starting from an $n$-ary SD object. In \cite{AZ} it was given a construction that associates an $n$-ary SD object, in the category of vector spaces, and a Yang-Baxter operator starting to an $n$-Lie algebra. Since simultaneous deformations of a given binary (resp. ternary) SD object provide a new $(X',q')$ (resp. $(X',T')$) which is a deformation of $(X,q)$ (resp. $(X,T)$), it follows that the construction of \cite{AZ} gives a corresponding deformation of the associated Yang-Baxter operator. Before investigating this correspondence, we pose the following.

\begin{definition}
	{\rm 
Let $\mathfrak g$ denote an $n$-Lie algebra. Then, let $X$ denote the associated $n$-ary SD object. In this situation, the Yang-Baxter operator corresponding to $X$ is denoted by $R_{\mathfrak g}$. 
}
\end{definition}

As before, we will consider explicitly binary and ternary cases, although the ternary case can be extended to $n$-ary in a straightforward manner. However, our assumption simplifies notation significantly. 

\begin{definition}[\cite{Eis,Eis2}]
	{\rm 
Let $R : V^{\otimes 2} \longrightarrow V^{\otimes 2}$ be a Yang-Baxter operator over the vector space $V$. An operator $\tilde R: V^{\otimes 2} \longrightarrow V^{\otimes 2}$ is said to be a {\it Yang-Baxter deformation} of 
$R$,
or YB deformation for short, is a Yang-Baxter operator such that $[R - \tilde R] (V^{\otimes 2}) \subset \hbar V^{\otimes 2}$. 
}
\end{definition}

We formalize the previous discussion regarding deformations of Yang-Baxter operators arising from (binary) Lie algebras in the following result. 

\begin{lemma}\label{lem:YBdeformations}
	Let $\mathfrak g$ be a binary Lie algebra and let $(\phi,\psi, \iota)$ denote a simultaneous deformation of the associated SD object $(X,\Delta)$. Then, $(\phi,\psi, \iota)$ induces a Yang-Baxter deformation of the operator $R_{\mathfrak g}$ given by 
	$$
	\tilde R_{\mathfrak g}  = R_{\mathfrak g} + \hbar[ (\mathbb 1\otimes \phi)\shuffle (\mathbb 1\otimes \Delta) +  (\mathbb 1\otimes q)\shuffle (\mathbb 1\otimes \psi) ],
	$$
	where $\shuffle = (\mathbb 1\otimes \tau \otimes \mathbb 1)$. 
\end{lemma}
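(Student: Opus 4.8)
The plan is to recognize $\tilde R_{\mathfrak g}$ as nothing but the Yang-Baxter operator assigned by the construction of \cite{AZ,EZ} to the \emph{deformed} SD object, and then to read off the stated first-order formula by a one-step expansion over $\mathbb k[[\hbar]]/(\hbar^2)$. First I would recall the explicit shape of the operator produced in \cite{AZ} from the binary SD datum $(X,q,\Delta)$, namely the composite
$$
R_{\mathfrak g} = (\mathbb 1\otimes q)\shuffle(\mathbb 1\otimes \Delta), \qquad \shuffle = (\mathbb 1\otimes \tau\otimes \mathbb 1).
$$
The key feature is that this assignment is natural in the SD object: it sends any binary SD object in the symmetric monoidal category of $\mathbb k$-modules to a Yang-Baxter operator on its underlying object, using only the structure morphisms $q$ and $\Delta$.

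Next I would feed in the data of the simultaneous deformation. By Definition~\ref{def:Sim_Co} and the discussion in Remark~\ref{rmk:smiultaneous}, a triple $(\phi,\psi,\iota)$ produces $q' = q + \hbar\phi$, $\Delta' = \Delta + \hbar\psi$ and $\epsilon' = \epsilon + \hbar\iota$, and Lemma~\ref{lem:deformedcoass} together with Lemma~\ref{lem:deformedSD} guarantee that $(X',\Delta',\epsilon',q')$ is again a genuine binary SD object, now over $\mathbb k[[\hbar]]/(\hbar^2)$. Applying the same natural construction to this deformed object yields a Yang-Baxter operator
$$
\tilde R_{\mathfrak g} := (\mathbb 1\otimes q')\shuffle(\mathbb 1\otimes \Delta')
$$
on $(X')^{\otimes 2}$; being the operator attached to a bona fide SD object, it automatically satisfies the Yang-Baxter equation.

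It then remains only to expand this composite modulo $\hbar^2$. Substituting $q' = q + \hbar\phi$ and $\Delta' = \Delta + \hbar\psi$, bilinearity produces four terms, the purely zeroth-order one being $R_{\mathfrak g}$ and the quadratic term $(\mathbb 1\otimes \phi)\shuffle(\mathbb 1\otimes \psi)$ vanishing in $\mathbb k[[\hbar]]/(\hbar^2)$; collecting the two surviving first-order contributions gives exactly the claimed expression. The deformation condition is then immediate, since
$$
R_{\mathfrak g} - \tilde R_{\mathfrak g} = -\hbar\bigl[(\mathbb 1\otimes \phi)\shuffle(\mathbb 1\otimes \Delta) + (\mathbb 1\otimes q)\shuffle(\mathbb 1\otimes \psi)\bigr]
$$
has image contained in $\hbar X^{\otimes 2}$, so $\tilde R_{\mathfrak g}$ is a Yang-Baxter deformation of $R_{\mathfrak g}$ in the sense required.

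The main obstacle I anticipate is not the algebra, which is a single expansion, but justifying that the construction genuinely transports deformed SD data to an honest Yang-Baxter operator: one must be certain that the relations packaged in Lemma~\ref{lem:deformedSD} (the first-order SD condition and the first-order coalgebra-morphism condition for $q'$ relative to $\Delta'$), together with the counit correction carried by $\iota$ via Lemma~\ref{lem:deformedcoass}, are precisely the input that the construction of \cite{AZ,EZ} consumes. Should one prefer to bypass the naturality argument, the alternative is to verify the Yang-Baxter equation for $\tilde R_{\mathfrak g}$ directly to first order in $\hbar$; this is a markedly longer computation whose zeroth-order part is the Yang-Baxter equation for $R_{\mathfrak g}$ and whose first-order part collapses, after invoking the SD and coalgebra-morphism identities, to the deformation equations of Lemma~\ref{lem:deformedSD}.
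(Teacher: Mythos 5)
Your proposal is correct and follows essentially the same route as the paper: the paper likewise identifies $\tilde R_{\mathfrak g}$ as the Yang-Baxter operator $(\mathbb 1\otimes q')\shuffle(\mathbb 1\otimes \Delta')$ attached (via Theorems~3.4 and 3.11 of \cite{AZ}) to the deformed SD object $(X',\Delta',q')$, expands modulo $\hbar^2$ to obtain the stated formula, and notes that the difference from $R_{\mathfrak g}$ is proportional to $\hbar$. Your additional care in pinning the well-definedness of the deformed SD object on Lemma~\ref{lem:deformedSD} and Lemma~\ref{lem:deformedcoass} only makes explicit what the paper's citation of \cite{AZ} implicitly consumes.
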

\begin{proof}
	The deformation of $R_{\mathfrak g}$ is the Yang-Baxter operator $\tilde R$ corresponding to the SD object $(X',\Delta')$ (see Theorem~3.4 and Theorem~3.11 in \cite{AZ}), where $X'$ is endowed with the SD map $q' = q + \hbar \phi$, and the coalgebra structure of $X'$ is given by $\Delta' = \Delta + \hbar \psi$, $\epsilon' = \epsilon + \hbar \iota$. To see this explicitly, we write the YB operator $\tilde R_{\mathfrak g} = (\mathbb 1\otimes q')\shuffle (\mathbb 1\otimes \Delta')$ and then, discarding terms with higher powers in $\hbar$ than $1$, we obtain the operator in the statement. 
	 The fact that $R - \tilde R$ maps the space $X'^{\otimes 2}$ into 
	 $\hbar X'^{\otimes 2}$ i.e. $\tilde R$ is a YB deformation, follows readily as well, since $\tilde R_{\mathfrak g}$ differs from $R_{\mathfrak g}$ by a term proportional to $\hbar$. 
\end{proof}

The same constructions hold when one considers a ternary Lie algebra (cf. results in Section~4 of \cite{AZ}). The only difference is that starting with a ternary bracket the construction is ``doubled'', and the corresopnding YB operators are given on the tensor products of the TSD objects. Therefore, an analogous version of Proposition~\ref{lem:YBdeformations} for ternary Lie algebras is obtained simply by changing binary with ternary, and YB operator over $X'$ with YB operator over $X'^{\otimes 2}$. The proof is substantially identical. 

\begin{lemma}\label{lem:terYBdeformations}
	Let $\mathfrak g$ be a ternary Lie algebra and let $(\phi,\psi, \iota)$ denote a simultaneous deformation of the associated SD object $(X,\Delta)$. Then, $(\phi,\psi, \iota)$ induces a Yang-Baxter deformation of the operator $R_{\mathfrak g}$ given by
	$$
	\tilde R_{\mathfrak g}  = R_{\mathfrak g} + \hbar[ (\mathbb 1^{\otimes 2}\otimes \phi)\shuffle (\mathbb 1^{\otimes 2}\otimes \Delta^{\otimes 2}) +  (\mathbb 1^{\otimes 2}\otimes T)\shuffle (\mathbb 1^{\otimes 2}\otimes \psi^{\otimes 2})],
	$$
	where $\shuffle$ is the linearization of the permuation associated to the TSD property.  
\end{lemma}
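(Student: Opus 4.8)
The plan is to prove Lemma~\ref{lem:terYBdeformations} by reducing it to the already-established Lemma~\ref{lem:YBdeformations} and the construction of \cite{AZ}, rather than re-deriving everything from scratch. The authors themselves signal this strategy: the ternary case is the binary construction ``doubled,'' with the YB operator now living on $X^{\otimes 2}$ built from a TSD object. So first I would recall from Section~4 of \cite{AZ} the explicit form of the YB operator $R_{\mathfrak g}$ associated to a ternary SD object $(X,T,\Delta_3)$. By the cited theorems, $R_{\mathfrak g}$ is assembled as $R_{\mathfrak g} = (\mathbb 1^{\otimes 2}\otimes T)\shuffle (\mathbb 1^{\otimes 2}\otimes \Delta^{\otimes 2})$, where $\shuffle$ is the linearization of the permutation implementing the TSD property and $\Delta$ is the binary comultiplication whose iterate produces $\Delta_3$. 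The goal is then purely computational: substitute the deformed data and extract the linear term in $\hbar$.

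Next I would carry out the substitution $T' = T + \hbar\phi$ and $\Delta' = \Delta + \hbar\psi$ into the expression for the YB operator on the deformed object $(X',T',\Delta')$. By Theorem~3.4 and Theorem~3.11 in \cite{AZ} (in their ternary incarnation), $(X',T',\Delta')$ being a TSD object guarantees that $\tilde R_{\mathfrak g} = (\mathbb 1^{\otimes 2}\otimes T')\shuffle (\mathbb 1^{\otimes 2}\otimes \Delta'^{\otimes 2})$ is again a genuine YB operator; this is exactly what a simultaneous deformation $(\phi,\psi,\iota)$ supplies, via the conditions of Lemma~\ref{lem:deformedTSD} and the coalgebra deformation of Section~\ref{sec:Coalgebradeformation}. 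Expanding $\Delta'^{\otimes 2} = (\Delta+\hbar\psi)^{\otimes 2} = \Delta^{\otimes 2} + \hbar(\psi\otimes\Delta + \Delta\otimes\psi) + \hbar^2(\cdots)$ and $T' = T + \hbar\phi$, then multiplying out and discarding all terms with $\hbar$-power exceeding one, the $\hbar^0$ term reproduces $R_{\mathfrak g}$ and the $\hbar^1$ term yields precisely
\begin{eqnarray*}
(\mathbb 1^{\otimes 2}\otimes \phi)\shuffle (\mathbb 1^{\otimes 2}\otimes \Delta^{\otimes 2}) + (\mathbb 1^{\otimes 2}\otimes T)\shuffle (\mathbb 1^{\otimes 2}\otimes \psi^{\otimes 2}),
\end{eqnarray*}
once one adopts the authors' convention of writing $\psi^{\otimes 2}$ for the symmetrized first-order coproduct deformation $\psi\otimes\Delta + \Delta\otimes\psi$ (this bookkeeping convention should be stated explicitly to match the displayed formula). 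Finally, the YB-deformation property $[R_{\mathfrak g} - \tilde R_{\mathfrak g}](X'^{\otimes 2(n-1)}) \subset \hbar X'^{\otimes 2(n-1)}$ is immediate, since by construction $\tilde R_{\mathfrak g}$ differs from $R_{\mathfrak g}$ only by a term carrying an explicit factor of $\hbar$.

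The main obstacle, and the only place requiring genuine care rather than routine expansion, is confirming that $\tilde R_{\mathfrak g}$ as written is actually a solution of the Yang-Baxter equation and not merely a formal first-order expression. This is where I would lean hardest on \cite{AZ}: the YB equation for $\tilde R_{\mathfrak g}$ is \emph{not} verified by direct brute force here but is inherited from the fact that $(X',T',\Delta')$ is a bona fide TSD object — which is exactly the content guaranteed by the simultaneous deformation data $(\phi,\psi,\iota)$ satisfying the two conditions of Lemma~\ref{lem:deformedTSD}. Thus the logical heart of the argument is that ``deformed TSD object $\Rightarrow$ deformed YB operator'' via the functorial construction of \cite{AZ}, and the explicit formula in the statement is then just the $\hbar$-linearization of that construction. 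I would emphasize that the proof is ``substantially identical'' to the binary Lemma~\ref{lem:YBdeformations}, the only genuine change being the doubling of tensor factors (replacing $\mathbb 1$ by $\mathbb 1^{\otimes 2}$ and $\Delta$ by $\Delta^{\otimes 2}$) and the reinterpretation of $\shuffle$ as the linearized permutation for the ternary self-distributivity, so no new analytic difficulty arises.
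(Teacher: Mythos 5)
Your proposal matches the paper's own argument: the paper proves only the binary case (Lemma~\ref{lem:YBdeformations}) by writing the Yang-Baxter operator of the deformed SD object via the construction of \cite{AZ}, truncating at first order in $\hbar$, and noting that the difference from $R_{\mathfrak g}$ carries an explicit factor of $\hbar$ --- and it explicitly declares the ternary proof ``substantially identical,'' which is exactly what you carry out. Your observation that the first-order term of $\Delta'^{\otimes 2}$ is $\psi\otimes\Delta+\Delta\otimes\psi$ rather than a literal $\psi\otimes\psi$, so that the symbol $\psi^{\otimes 2}$ in the displayed formula must be read as that symmetrized expression, is a correct and worthwhile clarification.
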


Let us now restrict our attention to the case of deformations of the SD structure where the coalgebra is fixed. These are simply simultaneous deformations where $\psi$ and $\iota$ are trivial. According to the computations given in Section~\ref{sec:examples}, only three Lie algebras over the reals, and two over the complex, admit deformations where $\psi$ can be nontrivial. In what follows, we use a notation to indicate YB cohomology that is similar to Eisermann's in \cite{Eis,Eis2}, where the YB operator and an ideal $\mathfrak m$ appear explicitly. Since we are dealing with the case where the ring is $\mathbb k[[\hbar]]$ taken along its unique maximal idea $(\hbar)$, we write $H^2_{\rm YB}(R,\hbar)$, or $H^2(R,\hbar)$ for short. We want to show now that SD cohomology classes give rise to YB cohomology classes. Specifically, we have the following.

\begin{theorem}\label{thm:2YB_deform}
	Let notation be as above. Then there is a homomorphism $\Xi^2$ from SD cohomology to Yang-Baxter cohomology $\Xi^2 : H^2_{\rm SD}(X;X) \longrightarrow H^2_{\rm YB}(R,\hbar)$. If, moreover, $\mathfrak g$ has trivial center, $\Xi^2$ is a monomorphism. 
\end{theorem}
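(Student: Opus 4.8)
The plan is to build $\Xi^2$ directly from the explicit deformation formula of Lemma~\ref{lem:YBdeformations} (and Lemma~\ref{lem:terYBdeformations} in the ternary case), specialized to the case $\psi = \iota = 0$ of a fixed coalgebra. Concretely, for a representative $2$-cocycle $\phi \in Z^2_{\rm SD}(X;X)$ I would set
$$\Xi^2([\phi]) := \left[(\mathbb 1\otimes \phi)\shuffle(\mathbb 1\otimes \Delta)\right],$$
the class of the first-order term $\hbar^{-1}(\tilde R_{\mathfrak g} - R_{\mathfrak g})$ of the Yang-Baxter deformation attached to $\phi$. Lemma~\ref{lem:YBdeformations} already guarantees that $R_{\mathfrak g} + \hbar(\mathbb 1\otimes \phi)\shuffle(\mathbb 1\otimes \Delta)$ is a genuine Yang-Baxter deformation of $R_{\mathfrak g}$, so its infinitesimal part is automatically a Yang-Baxter $2$-cocycle in Eisermann's complex; hence $\Xi^2$ is defined on cocycles. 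Since $\Delta$ and $\shuffle$ are fixed, the assignment $\phi \mapsto (\mathbb 1\otimes \phi)\shuffle(\mathbb 1\otimes \Delta)$ is $\mathbb k$-linear, which yields the homomorphism property once well-definedness on classes is in place.

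For well-definedness I would argue functorially. If $\phi = \delta^1 f$ is an SD coboundary, then by Theorem~\ref{thm:TSDdeform} the deformed SD object $(X_\hbar, q + \hbar\phi)$ is isomorphic, via $g = \mathbb 1 + \hbar f$, to the undeformed one. Because the passage $(X,q,\Delta)\mapsto R_{\mathfrak g}$ of \cite{AZ,EZ} is natural in the SD object, $g$ transports $R_{\mathfrak g}$ to $\tilde R_{\mathfrak g}$; that is, $\tilde R_{\mathfrak g}$ is gauge-equivalent to $R_{\mathfrak g}$ through the induced map on the underlying Yang-Baxter space. Thus $\Xi^2([\phi])$ is a Yang-Baxter coboundary, and $\Xi^2$ descends to a homomorphism $H^2_{\rm SD}(X;X)\to H^2_{\rm YB}(R,\hbar)$. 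This part needs no hypothesis on $\mathfrak g$.

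For the monomorphism statement under trivial center, the key preliminary observation is that $\phi$ can be recovered from $\xi(\phi) := (\mathbb 1\otimes \phi)\shuffle(\mathbb 1\otimes \Delta)$ by applying the counit to the comultiplied leg: using the counit axiom gives $\phi = (\epsilon\otimes\mathbb 1)\,\xi(\phi)$ up to the fixed reshuffle, so $\xi$ is injective at the level of cochains. The content of cohomological injectivity is to upgrade this to classes. Assuming $\Xi^2([\phi]) = 0$, I would take a Yang-Baxter gauge $F = \mathbb 1 + \hbar F_1$ trivializing $\tilde R_{\mathfrak g}$ and show that, under trivial center, $F_1$ may be replaced by an SD coderivation $f$ with $\phi = \delta^1 f$. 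Here Lemma~\ref{lem:special} enters: trivial center forces every SD $2$-cocycle to be special, i.e.\ supported on $\mathfrak g\otimes\mathfrak g$ with values in $\mathfrak g$, which rigidifies the trivializing data and lets me apply the counit recovery to the coboundary relation and read off $\phi = \delta^1 f$. Alternatively, and perhaps more cleanly, I would exhibit a one-sided inverse $\Upsilon(D) := (\epsilon\otimes\mathbb 1)D$ at the cochain level and verify, using Theorem~\ref{thm:LieBSD} (so that $H^2_{\rm SD}\cong H^2_{\rm Lie}$ is governed by special cocycles), that $\Upsilon$ carries Yang-Baxter coboundaries into SD coboundaries, whence $\Upsilon\circ\Xi^2 = \mathrm{id}$ on $H^2_{\rm SD}(X;X)$.

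The hard part will be exactly this last step: a trivializing Yang-Baxter gauge $F$ is a priori far more general than an SD isomorphism, since the Yang-Baxter gauge group sees only $R_{\mathfrak g}$ and not the coalgebra and SD data separately, so one must show that the freedom beyond SD automorphisms cannot trivialize a nontrivial $\phi$. I expect the trivial-center hypothesis to be precisely what closes this gap, either by rigidifying $F_1$ through the specialness of cocycles or by guaranteeing that $\Upsilon$ is well defined on cohomology; controlling this gauge freedom, rather than any of the preceding formal steps, is where the real work lies.
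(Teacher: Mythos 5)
Your construction of the homomorphism is essentially the paper's: the cochain-level map is the first-order term $c_\phi((a,x)\otimes(b,y)) = (b,y)^{(1)}\otimes\phi((a,x)\otimes(b,y)^{(2)})$ of the deformation from Lemma~\ref{lem:YBdeformations} with $\psi=\iota=0$. One small correction: in Eisermann's complex the $2$-cocycle attached to a deformation $\tilde R = R\circ(\mathbb 1 + \hbar\,\Xi)$ is $\Xi$, not $\hbar^{-1}(\tilde R - R)$; so you should set $\Xi^2(\phi) = R^{-1}c_\phi$ rather than $c_\phi$ itself before invoking Proposition~2.9 of \cite{Eis}. Your naturality argument for descent to cohomology is a reasonable substitute for the paper's explicit cochain map $\Xi^1$ (which is defined as $-f$, or $-f+\mathbb 1$ when $f(1,0)\neq 0$, and verified to make the square with the differentials commute); your counit-recovery observation $(\epsilon\otimes\mathbb 1)c_\phi = \phi$ is correct and is a nice way to see injectivity at the cochain level.

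The genuine gap is the injectivity on cohomology, and you have named it yourself without closing it: you must show that a Yang--Baxter gauge $g$ with $R^{-1}c_\phi = \delta^1_{\rm YB}g$ can be traded for an SD coderivation $f$ with $\phi = \delta^1_{\rm SD}f$, and your proposal stops at ``I expect the trivial-center hypothesis to be precisely what closes this gap.'' The paper closes it by a direct computation rather than by any structural rigidity statement: writing $g = g_0 + g_1$ for the components with values in $\mathbb k$ and in $\mathfrak g$, one expands $R\,\delta^1_{\rm YB}g$ on simple tensors $(a,x)\otimes(b,y)$ and compares with $c_\phi = (1,0)\otimes\phi((0,x)\otimes(0,y))$, which has this restricted form precisely because trivial center makes $\phi$ special (Lemma~\ref{lem:special}). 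A nonzero $g_0$ on $\mathfrak g$ would contribute a term $(1,0)\otimes(ag_0(y),\dots)$ with a $(1,0)\otimes(1,0)$ component that $c_\phi$ cannot match, forcing $g_0|_{\mathfrak g}=0$; the only residual freedom is $g(1,0)=k\in\mathbb k$, and one takes $f=-g$ if $k=0$ or, after normalizing $k=1$, $f=-g+\mathbb 1$, in either case obtaining $\phi=\delta^1_{\rm SD}f$. This case analysis is exactly why the paper's $\Xi^1$ carries the $-f$ versus $-f+\mathbb 1$ dichotomy. So your outline points at the right tools (specialness, counit recovery, controlling the gauge), but the step that constitutes the actual content of the monomorphism claim is asserted as a plan rather than proved; as written the proposal does not establish the second half of the theorem.
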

\begin{proof}
	Throughout this proof, we use the same notation as  in Lemma~\ref{lem:special}, where maps with image in $X$ have been written as consisting of a sum of a term with image in $\mathbb k$, and one with image in $\mathfrak g$. For $\phi\in C^2_{\rm SD}(X;X)$, we define the following map $c_\phi$, by providing its image on simple tensors as
	$$
	(a,x)\otimes (b,y)\mapsto (b,y)^{(1)}\otimes \phi((a,x)\otimes(b,y)^{(2)}).
	$$
	
First, we want to show that $\Xi^2$ is well defined in cohomology, which accounts to the first part of the statement of the theorem. From Lemma~\ref{lem:YBdeformations} (with $\psi, \iota = 0$) it follows that $\tilde R = R + \hbar c_\phi$ is a YB operator, provided that $\phi$ is an SD $2$-cocycle. But we can write $\tilde R = R\circ (\mathbb 1^{\otimes 2} + \hbar \Xi^2(\phi))$. Applying Proposition~2.9 in \cite{Eis}, it follows that $\Xi^2(\phi)$ is a YB $2$-cocycle.
 To show that $\Xi^2$ passes to the quotient, let us define the map $\Xi^1 : C^1_{\rm SD}(X;X) \longrightarrow C^1_{\rm YB}(R,\hbar)$ by setting $\Xi^1(f) = - f$ for $f\in C^1_{\rm SD}(X;X)$ such that $f(1,0) = 0$, and $\Xi^1(f) = -f + \mathbb 1$ for $f$ such that $f(1,0) \neq 0$. Then, we need to verify that the diagram 
 \begin{center}
 \begin{tikzcd}
 	C^1_{\rm SD}(X;X)\arrow[rr,"\delta^1_{\rm SD}"]\arrow[d,swap,"\Xi^1"] & & C^2_{\rm SD}(X;X)\arrow[d,"\Xi^2"]\\
 	C^1_{\rm YB}(R;\hbar)\arrow[rr,swap,"\delta^1_{\rm YB}"] & & C^2_{\rm YB}(R;\hbar)
 	\end{tikzcd}
 \end{center}
 is commutative. This is substantially the condition for $\Xi^1,\Xi^2$ to define a cochain map up to degree $2$. A direct computation on simple tensors shows that equality holds.
 
 Let us now consider injectivity of $\Xi^2$, under the extra assumption that $\mathfrak g$ has trivial center. We need to show that if $\Xi^2(\phi)$ is cobounded, then $\phi$ is conounded as well. In other words, we have to show that if $R^{-1}c_\phi = \delta^1_{\rm YB}g$ for some $g$, there exists a $f : X\longrightarrow X$ such that $\phi = \delta^1_{\rm SD}f$. 	Since by Lemma~\ref{lem:special} $\phi$ is special, the map $c_\phi$ becomes $c_\phi((a,x)\otimes (b,y)) = (1,0)\otimes \phi((0,x)\otimes (0,y))$. Then, we define $\Xi^2(\phi) =  R^{-1}c_\phi$.
 The equality $R^{-1}c_\phi = \delta^1_{\rm YB}g$ is equivalent to showing that $c_\phi = R\delta^1_{\rm YB}g$, which is simpler to deal with.
  For the RHS, we have 
 \begin{eqnarray*}
 R\delta^1_{\rm YB} f((a,x)\otimes (b,y)) &=& f(b,y)^{(1)}\otimes q((a,x)\otimes g(b,y)^{(2)}) + (b,y)^{(1)}\otimes q(g(a,x)\otimes (b,y)^{(2)})\\
 && - [(\mathbb 1\otimes g + g\otimes \mathbb 1)](b,y)^{(1)}\otimes q((a,x)\otimes (b,y)^{(2)})\\
 &=& (1,0)\otimes (ag_0(y),g_0(y)x + [x,g_1(y)]) + (1,0)\otimes (0,[g_1(a,x),y])\\
 && - (1,0)\otimes g(0,[x,y]) - g(1,0)\otimes (0,[x,y]).
 \end{eqnarray*}
Now, suppose that $g_0$ is not the trivial map, i.e. the projection of $g(0,y)$ over $\mathbb k$ is nontrivial for some $y$. Then, the RHS of equation $c_\phi = R\delta^{1}_{\rm YB}f$ would have a nontrivial term $(1,0)\otimes (1,0)$ for some $y$. But this is not possible since the LHS has no such term due to the fact that $\phi$ is special (Lemma~\ref{lem:special}). Then, it follows that $g_0$ is the zero map. We have found that 
\begin{eqnarray*}
 R\delta^1_{\rm YB} f((a,x)\otimes (b,y))  &=&   (1,0)\otimes (0, [x,g_1(y)]) + (1,0)\otimes (0,[g_1(a,x),y])\\
 && - (1,0)\otimes g(0,[x,y]) - g(1,0)\otimes (0,[x,y]),
\end{eqnarray*}
and moreover, $g(1,0) = g_0(1,0)$ since there are no terms that have the first tensorand different from $(1,0)$. Observe that if we had that $g(1,0) = 0$ we could simply choose $g = -f$ and 
it would follow that $\phi = \delta^1_{\rm SD}f$. Suppose otherwise. So $g(1,0) = k$ for some $k\in \mathbb k$. Without loss of generality we can assume that $k = 1$, since we could otherwise divide $g$ by $k$, and apply the previous steps with not further modifications. In this case, we can define $f = - g + \mathbb 1$, and a direct computation shows that the equality $\phi = \delta^1_{\rm SD}f$ holds in this case as well, completing the proof. 
\end{proof}

Similarly, one can prove the same result for $n$-Lie algebras. We provide the statement of the analogous result for $n=3$, proof of which follows the same lines given above.

\begin{theorem}\label{thm:nYB_deform}
	Let notation be as above. Then there is a homomorphism $\Xi^2$ from SD cohomology to Yang-Baxter cohomology $\Xi^2 : H^2_{\rm SD}(X;X) \longrightarrow H^2_{\rm YB}(R,\hbar)$. If, moreover, $\mathfrak g$ has trivial center, $\Xi^2$ is a monomorphism. 
\end{theorem}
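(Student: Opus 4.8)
The plan is to transcribe the proof of Theorem~\ref{thm:2YB_deform} into the ternary setting, using Lemma~\ref{lem:terYBdeformations} in place of Lemma~\ref{lem:YBdeformations} and replacing the YB operator over $X$ by the one over $X^{\otimes 2}$, as the discussion preceding Lemma~\ref{lem:terYBdeformations} prescribes. For an SD $2$-cochain $\phi\in C^2_{\rm SD}(X;X)$ I would set
$$
c_\phi = (\mathbb 1^{\otimes 2}\otimes \phi)\shuffle(\mathbb 1^{\otimes 2}\otimes \Delta^{\otimes 2}),
$$
the $\hbar$-linear coefficient of the deformed operator $\tilde R_{\mathfrak g}$ of Lemma~\ref{lem:terYBdeformations} specialized to $\psi = \iota = 0$. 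Writing $\tilde R = R_{\mathfrak g} + \hbar c_\phi = R_{\mathfrak g}\circ(\mathbb 1^{\otimes 4} + \hbar\,\Xi^2(\phi))$ defines the candidate $\Xi^2(\phi) = R_{\mathfrak g}^{-1}c_\phi$.

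For the first assertion I would argue exactly as in the binary case: since $\phi$ is an SD $2$-cocycle, Lemma~\ref{lem:terYBdeformations} makes $\tilde R$ a bona fide YB operator, and then Proposition~2.9 of \cite{Eis} forces $\Xi^2(\phi)$ to be a YB $2$-cocycle. To see that $\Xi^2$ descends to cohomology, I would define $\Xi^1$ by $\Xi^1(f) = -f$ when $f(1,0) = 0$ and $\Xi^1(f) = -f + \mathbb 1$ otherwise, and check that $(\Xi^1,\Xi^2)$ is a cochain map up to degree $2$, i.e. $\delta^1_{\rm YB}\Xi^1 = \Xi^2\delta^1_{\rm SD}$. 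This is again a direct evaluation on simple tensors; the only new feature is the heavier bookkeeping coming from the two extra tensor slots and the iterated ternary comultiplication.

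The main obstacle is injectivity under the trivial-center hypothesis (now in the Filippov sense). The crucial input is the ternary analogue of Lemma~\ref{lem:special}, namely that for a ternary Lie algebra with trivial center every SD $2$-cocycle $\phi$ is special; this is exactly the specialness result underlying the ternary isomorphism theorem stated above, and I would either invoke it directly or reprove it by applying the $2$-cocycle condition to the appropriate mixed simple tensors. Granting specialness, the map $c_\phi$ collapses to one supported on $(1,0)$-factors tensored with $\phi((0,x)\otimes\cdots)$, with no $\mathbb k$-valued components in the outer slots. Assuming $\Xi^2(\phi) = R_{\mathfrak g}^{-1}c_\phi = \delta^1_{\rm YB}g$, I would rewrite this as $c_\phi = R_{\mathfrak g}\,\delta^1_{\rm YB}g$ and expand the right-hand side on simple tensors; specialness of $\phi$ then forces the $\mathbb k$-valued part $g_0$ of $g$ to vanish, since a nonzero $g_0$ would produce a term whose outer tensorands are all $(1,0)$, which has no counterpart on the left. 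The remaining delicate point, exactly as in the binary proof, is the case $g(1,0)\neq 0$: one normalizes $g(1,0)$ to a scalar, divides through, and then sets $f = -g + \mathbb 1$, after which a direct ternary computation yields $\phi = \delta^1_{\rm SD}f$. This matching of components is where the ternary trivial-center hypothesis is genuinely used, to annihilate the bracket-valued terms that would otherwise obstruct the equality. It follows that $[\phi] = 0$ in $H^2_{\rm SD}(X;X)$, so $\Xi^2$ is a monomorphism.
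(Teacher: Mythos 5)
Your proposal is correct and matches the paper's intent exactly: the paper gives no separate argument for Theorem~\ref{thm:nYB_deform}, stating only that the proof ``follows the same lines given above,'' i.e.\ the proof of Theorem~\ref{thm:2YB_deform}, and your transcription --- defining $c_\phi$ as the $\hbar$-coefficient from Lemma~\ref{lem:terYBdeformations} with $\psi=\iota=0$, setting $\Xi^2(\phi)=R_{\mathfrak g}^{-1}c_\phi$, invoking Proposition~2.9 of \cite{Eis}, building the cochain map $\Xi^1$, and using the ternary analogue of Lemma~\ref{lem:special} for injectivity --- is precisely that adaptation.
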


In particular, we observe that when $\mathfrak g$ has trivial center (as an $n$-ary Lie algebra) we see that the Lie cohomology is a direct summand of the Yang-Baxter cohomology. In other words, nontrivial Lie deformations produce nontrivial YB deformations of the associated YB operator.

\begin{corollary}
    Let $\mathfrak g$ be an $n$-Lie algebra with trivial center. Then the Lie algebra cohomology injects into the YB cohomology of the operator $R_{\mathfrak g}$. 
\end{corollary}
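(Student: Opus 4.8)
The plan is to obtain the desired injection as the composition of two monomorphisms that have already been established. The first is the comparison map between Lie cohomology and self-distributive cohomology: in the binary case, Theorem~\ref{thm:LieBSD} shows that $\Psi^2 : H^2_{\rm Lie}(\mathfrak g;\mathfrak g) \longrightarrow H^2_{\rm BSD}(X;X)$ is a monomorphism, and in fact an isomorphism precisely when $\mathfrak g$ has trivial center. For ternary algebras the analogous role is played by $\Lambda^2$, whose injectivity (and isomorphism property under the trivial-center hypothesis) was recorded in the corresponding theorem of Section~\ref{sec:Lie_SD}; for arbitrary arity one uses the map built by the same pattern. Under the trivial-center assumption, each of these maps is injective.

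The second ingredient is the passage from self-distributive cohomology to Yang-Baxter cohomology. Here I would invoke Theorem~\ref{thm:2YB_deform} (respectively Theorem~\ref{thm:nYB_deform} in the higher-arity case), which furnishes the homomorphism $\Xi^2 : H^2_{\rm SD}(X;X) \longrightarrow H^2_{\rm YB}(R_{\mathfrak g},\hbar)$ and guarantees that it is a monomorphism exactly when $\mathfrak g$ has trivial center. The final step is then purely formal: the composite $\Xi^2 \circ \Psi^2$ (respectively $\Xi^2 \circ \Lambda^2$) is a composition of injective group homomorphisms, hence injective, producing the embedding $H^2_{\rm Lie}(\mathfrak g;\mathfrak g) \hookrightarrow H^2_{\rm YB}(R_{\mathfrak g},\hbar)$.

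I do not expect a genuine obstacle, since the statement reduces entirely to the injectivity results proved earlier; the only point demanding care is verifying that the trivial-center hypothesis is exactly the condition making \emph{both} factors injective, which is indeed the case. I would also remark, in line with the discussion preceding the corollary, that because $\Psi^2$ (respectively $\Lambda^2$) is an \emph{isomorphism} under this hypothesis rather than merely a monomorphism, the image of the composite realizes $H^2_{\rm Lie}(\mathfrak g;\mathfrak g)$ as a direct summand of the Yang-Baxter cohomology, strengthening the bare injectivity asserted in the statement.
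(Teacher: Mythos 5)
Your proposal is correct and follows essentially the same route as the paper, whose proof is a one-line appeal to the composition of the Lie-to-SD and SD-to-YB monomorphisms under the trivial-center hypothesis; you merely spell out explicitly which theorems ($\Psi^2$/$\Lambda^2$ injectivity and $\Xi^2$ injectivity) are being composed. Your closing observation that the composite realizes $H^2_{\rm Lie}(\mathfrak g;\mathfrak g)$ as a direct summand also matches the remark the paper makes immediately before the corollary.
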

\begin{proof}
    This is a direct consequence of Theorem~\ref{thm:TSDdeform} and Theorems~\ref{thm:2YB_deform} and \ref{thm:nYB_deform}.
\end{proof}

We conclude this section with a few comments on possible topological applications of this theory. Part of these comments were also mentioned in \cite{AZ}, which constitutes the first step of the overall program. Some of the remarks presented in \cite{AZ} were in fact addressed in this article. The fundamental objective is to derive link invariants from the YB operators associated to given $n$-ary Lie algebras. Here, we can think of the $n$-ary version of the approach as a ``colored'' instance of the binary one. The cohomology theory considered here allows to deform the SD structure associated to an $n$-ary Lie algebra, and therefore to produce deformed YB operators whose traces (which are in practice used for invariants \cite{Tur_YB,Oht}) are polynomials with respect to the variable $\hbar$. In the case that the Lie algebra can be deformed arbitrarily many times, one might expects that such deformations propagate to the corresponding SD structure and YB operator. Therefore the corresponding invariants would be power series on the formal parameter $\hbar$. Interestingly, for the case of YB operators infinitesimally deformed, one can show that the two coefficients (of degree zero and one in $\hbar$) are both link invariants, therefore suggesting that in the higher deformation case each coefficient of the power series might be a link invariant as well. 

   \section*{Acknowledgement} 
Mohamed Elhamdadi was partially supported by Simons Foundation collaboration grant 712462.
	
	\newpage
	\appendix
	\section{Proof of Lemma~\ref{lem:differentials}}
	
    In this appendix we show that the differentials defined in Section~\ref{sec:TSD_coh} compose to zero. 
	The proof is by direct computation on basis monomials of $X^{\otimes 5}$. We need to show that $\delta^1f$ is in $C^2_{\rm TSD}(X;X)$, and that $\delta^2\delta^1f = 0$ for every coderivation $f$. The first statement accounts to showing that the following diagram commutes
	\begin{center}
		\begin{tikzcd}
			X^{\otimes 3}\arrow[rr,"\shuffle\circ\Delta_3^{\otimes 3}"]\arrow[d,swap,"\delta^1f"] && X^{\otimes 9}\arrow[d,"\delta^1f\otimes T^{\otimes 2}+T\otimes \delta^1f\otimes T+T^{\otimes 2}\otimes \delta^1f"] \\
			X\arrow[rr,swap,"\Delta_3"] && X^{\otimes 3}
			\end{tikzcd}
	\end{center}
	 Let us consider the lower perimeter first. For $x\otimes y\otimes z\in X^{\otimes 3}$ we have 
	\begin{eqnarray*}
	\Delta_3\delta^1f(x\otimes y\otimes z) &=& \Delta_3(f(T(x\otimes y\otimes z) - T(f(x)\otimes y\otimes z))\\
	&& - T(x\otimes f(y)\otimes z) - T(x\otimes y\otimes f(z)))\\
	&=& (f\otimes \mathbb 1\otimes \mathbb 1)(T^{\otimes 3}(x^{(1)}\otimes y^{(1)}\otimes z^{(1)}\otimes x^{(2)}\otimes y^{(2)}\otimes z^{(2)}\otimes x^{(3)}\otimes y^{(3)}\otimes z^{(3)}))\\
	&& + ( \mathbb 1\otimes f\otimes \mathbb 1)(T^{\otimes 3}(x^{(1)}\otimes y^{(1)}\otimes z^{(1)}\otimes x^{(2)}\otimes y^{(2)}\otimes z^{(2)}\otimes x^{(3)}\otimes y^{(3)}\otimes z^{(3)}))\\
	&&+ (\mathbb 1\otimes \mathbb 1\otimes f)(T^{\otimes 3}(x^{(1)}\otimes y^{(1)}\otimes z^{(1)}\otimes x^{(2)}\otimes y^{(2)}\otimes z^{(2)}\otimes x^{(3)}\otimes y^{(3)}\otimes z^{(3)}))\\
	&& - T^{\otimes 3}(f(x^{(1)})\otimes y^{(1)}\otimes z^{(1)}\otimes x^{(2)}\otimes y^{(2)}\otimes z^{(2)}\otimes x^{(3)}\otimes y^{(3)}\otimes z^{(3)})\\
	&& -T^{\otimes 3}(x^{(1)}\otimes y^{(1)}\otimes z^{(1)}\otimes f(x^{(2)})\otimes y^{(2)}\otimes z^{(2)}\otimes x^{(3)}\otimes y^{(3)}\otimes z^{(3)})\\
	&& - T^{\otimes 3}(x^{(1)}\otimes y^{(1)}\otimes z^{(1)}\otimes x^{(2)}\otimes y^{(2)}\otimes z^{(2)}\otimes f(x^{(3)})\otimes y^{(3)}\otimes z^{(3)})\\
	&& - T^{\otimes 3}x^{(1)}\otimes f(y^{(1)})\otimes z^{(1)}\otimes x^{(2)}\otimes y^{(2)}\otimes z^{(2)}\otimes x^{(3)}\otimes y^{(3)}\otimes z^{(3)})\\
	&& -T^{\otimes 3}(x^{(1)}\otimes y^{(1)}\otimes z^{(1)}\otimes x^{(2)}\otimes f(y^{(2)})\otimes z^{(2)}\otimes x^{(3)}\otimes y^{(3)}\otimes z^{(3)})\\
	&& - T^{\otimes 3}(x^{(1)}\otimes y^{(1)}\otimes z^{(1)}\otimes x^{(2)}\otimes y^{(2)}\otimes z^{(2)}\otimes x^{(3)}\otimes f(y^{(3)})\otimes z^{(3)})\\
	&& - T^{\otimes 3}(x^{(1)}\otimes y^{(1)}\otimes f(z^{(1)})\otimes x^{(2)}\otimes y^{(2)}\otimes z^{(2)}\otimes x^{(3)}\otimes y^{(3)}\otimes z^{(3)})\\
	&& -T^{\otimes 3}(x^{(1)}\otimes y^{(1)}\otimes z^{(1)}\otimes x^{(2)}\otimes y^{(2)}\otimes f(z^{(2)})\otimes x^{(3)}\otimes y^{(3)}\otimes z^{(3)})\\
	&& - T^{\otimes 3}(x^{(1)}\otimes y^{(1)}\otimes z^{(1)}\otimes x^{(2)}\otimes y^{(2)}\otimes z^{(2)}\otimes x^{(3)}\otimes y^{(3)}\otimes f(z^{(3)})),
	\end{eqnarray*}
	where we have used the fact that $f$ is a coderivation to rewrite the term $ \Delta_3(f(T(x\otimes y\otimes z)) - T(f(x)\otimes y\otimes z)$, and the fact that $T$ is a coalgebra morphism (by definition of TSD object) and the coderivation property again, to rewrite the terms $- \Delta_3T(f(x)\otimes y\otimes z)- \Delta_3T(x\otimes f(y)\otimes z) - \Delta_3T(x\otimes y\otimes f(z)))$. This is seen to coincide with the upper perimeter by direct computation. 
	
	For the second statement, i.e. the fact that composition of differentials is trivial on coderivations $f: X\longrightarrow X$, we evaluate $\delta^2(\delta^1f)$ on simple tensors $x_1\otimes x_2\otimes x_3\otimes x_4 \otimes x_5$. We have
	\begin{eqnarray*}
	\lefteqn{\delta^2(\delta^1f)(x_1\otimes x_2\otimes x_3\otimes x_4\otimes x_5)}\\
	&=& T(f(T(x_1\otimes x_2\otimes x_3))\otimes x_4\otimes x_5)- T(T(f(x_1)\otimes x_2\otimes x_3)\otimes x_4\otimes x_5)\\
	&& - T(T(x_1\otimes f(x_2)\otimes x_3)\otimes x_4\otimes x_5) - T(T(x_1\otimes x_2\otimes f(x_3))\otimes x_4\otimes x_5)\\
	&& + f(T(T(x_1\otimes x_2\otimes x_3)\otimes x_4\otimes x_5)) - T(f(T(x_1\otimes x_2\otimes x_3))\otimes x_4\otimes x_5)\\
	&& - T(T(x_1\otimes x_2\otimes x_3)\otimes f(x_4)\otimes x_5) - T(T(x_1\otimes x_2\otimes x_3)\otimes x_4\otimes f(x_5))\\
	&& - f(T(T(x_1\otimes x_4^{(1)}\otimes x_5^{(1)})\otimes T(x_2\otimes x_4^{(2)}\otimes x_5^{2})\otimes T(x_3\otimes x_4^{(3)}\otimes x_5^{3})))\\
	&& +  T(f(T(x_1\otimes x_4^{(1)}\otimes x_5^{(1)}))\otimes T(x_2\otimes x_4^{(2)}\otimes x_5^{2})\otimes T(x_3\otimes x_4^{(3)}\otimes x_5^{3}))\\
	&& + T(T(x_1\otimes x_4^{(1)}\otimes x_5^{(1)})\otimes f(T(x_2\otimes x_4^{(2)}\otimes x_5^{2}))\otimes T(x_3\otimes x_4^{(3)}\otimes x_5^{3}))\\
	&& + T(T(x_1\otimes x_4^{(1)}\otimes x_5^{(1)})\otimes T(x_2\otimes x_4^{(2)}\otimes x_5^{2})\otimes f(T(x_3\otimes x_4^{(3)}\otimes x_5^{3})))\\
	&& - T(f(T(x_1\otimes x_4^{(1)}\otimes x_5^{(1)}))\otimes T(x_2\otimes x_4^{(2)}\otimes x_5^{2})\otimes T(x_3\otimes x_4^{(3)}\otimes x_5^{3}))\\
	&& + T(T(f(x_1)\otimes x_4^{(1)}\otimes x_5^{(1)})\otimes T(x_2\otimes x_4^{(2)}\otimes x_5^{2})\otimes T(x_3\otimes x_4^{(3)}\otimes x_5^{3}))\\
	&& +  T(T(x_1\otimes f(x_4^{(1)})\otimes x_5^{(1)})\otimes T(x_2\otimes x_4^{(2)}\otimes x_5^{2})\otimes T(x_3\otimes x_4^{(3)}\otimes x_5^{3}))\\
	&& + T(T(x_1\otimes x_4^{(1)}\otimes f(x_5^{(1)}))\otimes T(x_2\otimes x_4^{(2)}\otimes x_5^{2})\otimes T(x_3\otimes x_4^{(3)}\otimes x_5^{3}))\\
	&& - T(T(x_1\otimes x_4^{(1)}\otimes x_5^{(1)})\otimes f(T(x_2\otimes x_4^{(2)}\otimes x_5^{2}))\otimes T(x_3\otimes x_4^{(3)}\otimes x_5^{3}))\\
	&& + T(T(x_1\otimes x_4^{(1)}\otimes x_5^{(1)})\otimes T(f(x_2)\otimes x_4^{(2)}\otimes x_5^{2})\otimes T(x_3\otimes x_4^{(3)}\otimes x_5^{3}))\\
	&& +  T(T(x_1\otimes x_4^{(1)}\otimes x_5^{(1)})\otimes T(x_2\otimes f(x_4^{(2)})\otimes x_5^{2})\otimes T(x_3\otimes x_4^{(3)}\otimes x_5^{3}))\\
	&& + T(T(x_1\otimes x_4^{(1)}\otimes x_5^{(1)}\otimes T(x_2\otimes x_4^{(2)}\otimes f(x_5^{2}))\otimes T(x_3\otimes x_4^{(3)}\otimes x_5^{3}))\\
	&& - T(T(x_1\otimes x_4^{(1)}\otimes x_5^{(1)})\otimes T(x_2\otimes x_4^{(2)}\otimes x_5^{2})\otimes f(T(x_3\otimes x_4^{(3)}\otimes x_5^{3})))\\
	&& + T(T(x_1\otimes x_4^{(1)}\otimes x_5^{(1)})\otimes T(x_2\otimes x_4^{(2)}\otimes x_5^{2})\otimes T(f(x_3)\otimes x_4^{(3)}\otimes x_5^{3}))\\
	&& +  T(T(x_1\otimes x_4^{(1)}\otimes x_5^{(1)})\otimes T(x_2\otimes x_4^{(2)}\otimes x_5^{2})\otimes T(x_3\otimes f(x_4^{(3)})\otimes x_5^{3}))\\
	&& + T(T(x_1\otimes x_4^{(1)}\otimes x_5^{(1)})\otimes T(x_2\otimes x_4^{(2)}\otimes x_5^{2})\otimes T(x_3\otimes x_4^{(3)}\otimes f(x_5^{3}))).
	\end{eqnarray*}
    It is a direct verification to see that all terms cancel in pairs, with exception of terms that differ by an application of the coderivation condition of $f$ and therefore cancel out anyways. The composition vanishes identically for every simple tensor of $X^{\otimes 5}$, therefore completing the proof.

\end{document}